\DeclareMathAlphabet{\mathpzc}{OT1}{pzc}{m}{it} 
\numberwithin{equation}{section}
\theoremstyle{plain}
\newtheorem{thm}{Theorem}[section]
\newtheorem{pro}[thm]{Proposition}
\newtheorem{lemma}[thm]{Lemma}
\newtheorem{cor}[thm]{Corollary}
\newtheorem*{thm*}{Theorem}
\newtheorem*{pro*}{Proposition}
\theoremstyle{definition}
\newtheorem{dfn}[thm]{Definition}
\newtheorem*{dfn*}{Definition}
\newtheorem{exa}[thm]{{\it Example}}
\newtheorem*{exa*}{{\it Example}}
\theoremstyle{remark}
\newtheorem{rem}[thm]{{\it Remark}}
\newtheorem*{rem*}{{\it Remark}}
\DeclareMathOperator{\supp}{supp}
\newcommand*{\A}{\mathbf{A}}
\newcommand*{\B}{\mathbf{B}}
\newcommand*{\borel}{{\mathfrak B}}
\newcommand*{\bscr}{\mathscr{B}}
\newcommand*{\cbb}{\mathbb C}
\newcommand*{\D}{\mathrm d\hspace{.15ex}}
\newcommand*{\dscr}{\mathscr D}
\newcommand*{\dz}[1]{{\mathscr D}(#1)}
\newcommand*{\dzn}[1]{{\mathscr D}^\infty(#1)}
\newcommand*{\hh}{{\mathcal H}}
\newcommand*{\is}[2]{\langle#1,#2\rangle}
\newcommand*{\jd}[1]{{\mathscr N}(#1)}
\newcommand*{\nbb}{\mathbb N}
\newcommand*{\ob}[1]{{\mathscr R}(#1)}
\newcommand*{\ogr}[1]{\mathbf{B} (#1)}
\newcommand*{\rbb}{\mathbb R}
\newcommand*{\rzut}[1]{\mathbf{P} (#1)}
\newcommand*{\shk}{\mathcal{S}_c({\mathcal H},\kappa)}
\newcommand*{\Le}{\leqslant}
\newcommand*{\Lec}{\preccurlyeq}
\newcommand*{\Ge}{\geqslant}
\newcommand*{\zbb}{\mathbb Z}
\begin{document}

   \title[Multidimensional spectral order]
{Multidimensional spectral order for selfadjoint operators}

   \author[Artur P{\l}aneta]{Artur P{\l}aneta}

   \address{Katedra Zastosowa\'{n} Matematyki, Uniwersytet Rolniczy w Krakowie,
ul. Balicka 253 c, PL-30198 Kra\-k\'ow}

   \email{artur.planeta@urk.edu.pl}

\thanks{The author was supported by the Ministry of Science and Higher Education of the Republic
of Poland.}

\subjclass{}

\keywords{spectral order, joint spectral measure, joint bounded vectors, integral inequalities, separately increasing function}

   \begin{abstract}
The aim of this paper is to extend the notion of the spectral order for finite families of pairwise commuting bounded and unbounded selfadjoint operators in Hilbert space. It is shown that the multidimensional spectral order $\Lec$ is preserved by transformations represented by spectral integrals of separately increasing Borel functions on $\rbb^\kappa$. In particular, the $\kappa$-dimensional spectral order is the restriction of product of $\kappa$ spectral orders for selfadjoint operators.  In the context of positive families of pairwise commuting selfadjoint operator, the relation $\A\Lec\B$ holds if and only if $\A^\alpha\Le \B^\alpha$ for every $\alpha\in\zbb_+^\kappa$.
   \end{abstract}
   \maketitle
 \section{Introduction}

   Let $\hh$ be a complex Hilbert space. Denote by $\mathbf{B}_s(\hh)$ the set of all bounded selfadjoint operators on $\hh$. If $A,B\in\mathbf{\mathbf{B}}_s(\hh)$, then we write $A\Le B$ whenever $\is{Ah}{h}\Le \is{Bh}{h}$ for every $h\in\hh$.

    In \cite{sherman1951} Sherman proved that $\mathcal{A}_s$ the set
of all selfadjoint elements of a $C^*$-algebra $\mathcal{A}$ of bounded linear operators on $\hh$ is a lattice with respect to $\Le$ if and only if $\mathcal{\mathcal{A}}$ is
commutative. In fact, the more noncommutative $\mathcal{A}$ is the less lattice structure $\mathcal{A}_s$ has. The result of
   Kadison \cite{kadison1951} shows that partially ordered set $(\mathbf{\mathbf{B}}_s(\hh),\Le)$ is an anti-lattice. This means that for any $A,B\in \mathbf{\mathbf{B}}_s(\hh)$, the greatest lower bound of the set $\{A,B\}$ exists if and only if $A\Le B$ or $B\Le A$.

  These results motivated Olson to introduce spectral order $\Lec$ for bounded selfadjoint operators \cite{Olson1971}. One of the main result in \cite{Olson1971} says that $(\mathbf{\mathbf{B}}_s(\hh),\Lec)$ is a conditionally complete lattice. It should also be mentioned that in contrast to the classical order $\Le$, the spectral order is not a vector order.  Spectral order was considered in the context of matrix theory and von Neumann algebras (see \cite{Kato1979,Ando1989,arveson1974,Akemann1996}). The spectral order also enabled to solve Dirichlet problem for operator-valued harmonic function in \cite{gogus}.

    The spectral order has natural interpretation in the mathematical description of quantum mechanics (see \cite{Hamhalter2007}). This fact has led to increased interest in automorphisms of the spectral order. For instance, automorphisms of partially ordered set $(\mathcal{E}(H),\Lec)$, where $\mathcal{E}(\hh)$ denote the set of all positive bounded operators in the closed unit ball of $\ogr{\hh}$, were described in \cite{Molnar2007,Molnar2016}. In turn, automorphisms of subsets of positive selfadjoint operators were investigated in \cite{turilova2014,hamhalter-turilova2018}. Spectral order is also an important ingredient of the topos formulation of quantum theory (see for example \cite{Doring,Doring-Dewitt,Wolters2014,Flori2013}).

In the previous paper \cite{Paneta2012} (see also \cite{hamhalter-turilova2018,turilova2014}) the spectral order was investigated in the context of unbounded selfadjoint operators. The aim of this paper is to extend the notion of the spectral order also for finite families of pairwise commuting bounded and unbounded selfadjoint operators. We start with the definition of the spectral order for selfadjoint operators. Let $A$ and $B$ be selfadjoint operators in $\hh$. If $E_A$ and $E_B$ are spectral measures of $A$ and $B$ defined on Borel subsets of $\rbb$, then
    $A\Lec B$ if and only if $E_B((-\infty,x])\Le E_A((-\infty,x])$ for every $x\in\rbb$. As we see, the definition of spectral order depends on the fact that for every selfadjoint operator $A$, there  exists a unique spectral measure $E$ on the $\sigma$-algebra of all Borel subsets of real line satisfying the condition
$$A=\int_{\rbb}xE(\D x).$$ If $\A=(A_1,\ldots,A_\kappa)$ is a $\kappa$-tuple of selfadjoint operators, $\kappa\in\nbb$, then similar representation
\begin{equation}\label{aj}A_j=\int_{\rbb^\kappa}x_jE(\D x),\quad j=1,\ldots,\kappa,\end{equation} holds for some Borel spectral measure $E$ on $\rbb^\kappa$  if and only if $A_1,\ldots,A_\kappa$ pairwise commute.  The set of all $\kappa$-tuples of pairwise commuting selfadjoint operators in $\hh$ will be denoted by $\shk$. By definition, $\A=(A_1,\ldots,A_\kappa)\in\shk$ if spectral measures $E_{A_1},\ldots,E_{A_\kappa}$ commute, i.e.
\begin{equation*} E_{A_k}(\Delta_1)E_{A_j}(\Delta_2)= E_{A_j}(\Delta_2)E_{A_k}(\Delta_1)\end{equation*} for every Borel sets $\Delta_1,\Delta_2\subset\rbb$ and $k,j\in\{1,\ldots,\kappa\}$.
In fact, condition \eqref{aj} determine the unique measure $E=E_\A$, which allow us to consider the multivariable spectral resolution $F_\A(x_1,\ldots,x_\kappa)=E_\A((-\infty,x_1]\times\ldots\times(-\infty,x_\kappa])$ for $x_1,\ldots,x_\kappa\in\rbb$. 
 In particular, if $\A,\B\in\shk$, then they can be ordered by $\A\Lec \B$ if $F_\B(x)\Le F_\A(x)$ for every $x\in\rbb^\kappa$.
This is the first argument for the choice of $\shk$.

The second argument comes from quantum theory. Let $o_1,\ldots,o_\kappa$ and $p_1,\ldots,p_\kappa$ be two systems of commensurable observables represented by $\A,\B\in\shk$, respectively. Consider unit vector $h$ in $\hh$, which is interpreted as the state of the system. Then the pairs $(h,\A)$ and $(h,\B)$ determine random vectors $X=(X_1,\ldots,X_\kappa)$ and $Y=(Y_1,\ldots,Y_\kappa)$, respectively. In particular, the distribution functions of $X$ and $Y$ are given by $F_X(x)=\is{F_\A(x) h}{h}$ and $F_Y(x)=\is{F_\B(x) h}{h}$, $x\in\rbb^\kappa$. Concluding, the relation $\A\Lec\B$ means that the corresponding distribution functions $F_X$ and $F_Y$ are pointwise ordered. For more details on quantum theory, we refer reader to \cite{Bongaarts}.

In Section \ref{swp} we investigate fundamental properties of multidimensional spectral order. The main results state that the multidimensional spectral order is preserved by transformation represented by spectral integrals of separately increasing Borel functions on $\rbb^\kappa$ (see Theorem \ref{nowe} and Theorem \ref{spekwiel}), which extends the known results for spectral order (confer \cite[Proposition 6.2.]{Paneta2012},\cite{Molnar2007}). In particular, we obtain equivalent definition of multidimensional spectral order in Corollary \ref{def-eq}, which is a counterpart of \cite[Theorem 1.]{fujii-kasahara}. As the consequence of Theorem \ref{nowe} we derive some information when the spectral order behaves like a vector order in Corollaries \ref{summno} and \ref{iloczyn}. According to Theorem \ref{spekwiel} it is also possible to extend  the multidimensional spectral order to  $\mathcal{S}(\hh,\kappa)$ the set of all $\kappa$-tuples of selfadjoint operators in $\hh$ as a product order. It means that $\A\Lec\B$ if and only if $A_j\Lec B_j$ for every $j=1,\ldots,\kappa$ whenever $\A,\B\in\mathcal{S}(\hh,\kappa)$. Note that $(\mathcal{S}(\hh,\kappa),\Lec)$ is a conditionally complete lattice since $(\mathcal{S}(\hh,1),\Lec)$ is a conditionally complete lattice by \cite[Corollary 5.4.]{Paneta2012}. In contrast to $(\mathcal{S}(\hh,\kappa),\Lec)$, partially ordered set $(\shk,\Lec)$ is not even a lattice (see Example \ref{lattice}). It should also be pointed out that in the case $\kappa=2$ the multidimensional spectral order on $\mathcal{S}_c(\hh,2)$ can be identified with the spectral order for normal operators (see Example \ref{normalny1} and Proposition \ref{normalny2}). This order was considered by Brenna and Flori in \cite{Brenna2012} in the context of the daseinisation of normal operators.

In Section \ref{dziedzina} we discuss the relation between domains of monomials $\A^\alpha$ and $\B^\alpha$ for $\alpha\in\zbb_+^\kappa$  provided that $\A,\B\in\shk$ and $\A\Lec \B$. The role of operators $\A^\alpha$ and $\B^\alpha$ becomes more important if we consider positive $\A, \B\in\shk$. This question is studied in Section \ref{kolejny}. One of the main result of this Section states that $\A\Lec \B$ if and only if $\A^\alpha\Le \B^\alpha$ for every $\alpha\in\zbb_+^\kappa$. This generalize \cite[Theorem 3.]{Olson1971}, which says that $A\Lec B$ if and only if $A^n\Le B^n$ for every $n=1,2,\ldots$, where $A$ and $B$ are bounded positive selfadjoint operators on $\hh$.

   \section{Prerequisites}
Denote by $\zbb_+$, $\nbb$, $\mathbb{Q}$, $\rbb_+$, $\rbb$, and $\cbb$ the
sets of nonnegative integers, positive integers, rational numbers,
nonnegative real numbers, real numbers, and complex numbers,
respectively. Set $\overline{\rbb} = \rbb \cup
\{-\infty\} \cup \{\infty\}$. As usual, $\chi_\sigma$
stands for the characteristic function of a set
$\sigma$ (it is clear from the context which set is
the domain of definition of $\chi_\sigma$).

In what follows, $\hh$ denotes a complex Hilbert
space. By an {\em operator} in $\hh$ we understand a
linear mapping $A\colon \hh \supseteq \dz A
\rightarrow \hh$ defined on a linear subspace $\dz A$
of $\hh$, called the {\em domain} of $A$. Let $A$ be
an operator in $\hh$. We write $\jd A$, $\ob A$, $A^*$
and $\bar A$ for the kernel, the range, the adjoint
and the closure of $A$, respectively (in case they
exist). Set $\dzn{A}=\bigcap_{n=1}^{\infty}\dz{A^{n}}$
and
   \begin{equation*}
\bscr_{a}(A)= \bigcup_{c \in (0,\infty)} \;
\bigcap_{n=0}^\infty \Big\{ h\in \dzn{A}\colon \Vert
A^{n}h\Vert\Le ca^{n}\Big\}, \quad a \in \rbb_+.
   \end{equation*}
Call a member of $\bscr(A):=\bigcup_{a\in \rbb_+}
\bscr_{a}(A)$ a {\em bounded vector} of $A$ (cf.\
\cite{far}). We say that an operator $A$ in $\hh$ is
{\em positive} if $\is{Ah}h \Ge 0$ for every $h \in
\dz A$. A densely defined operator $A$ is said to be
{\em selfadjoint} if $A=A^*$. If $A$ is a positive
selfadjoint operator in $\hh$, then its square root is
denoted by $A^{1/2}$.

Denote by $\ogr \hh$ the $C^*$-algebra of all bounded
operators $A$ in $\hh$ such that $\dz A=\hh$. We write
$I=I_\hh$ for the identity operator on $\hh$. 
 It is well-known that the set
$\rzut{\hh}$ of all orthogonal projections on $\hh$
equipped with the above-defined partial order
``$\Le$'' is a complete lattice. If
$\mathcal{A}\subseteq\rzut{\hh}$, then $\bigvee
\mathcal{A}$ (respectively, $\bigwedge \mathcal{A}$)
stands for the supremum (respectively, the infimum) of
$\mathcal{A}$. Denote by $P_M$ the orthogonal
projection of $\hh$ onto a closed linear subspace $M$
of $\hh$. If $\{M_j\}_{j\in J}$ is a family of subsets
of $\hh$, then $\bigvee_{j\in J} M_j$ stands for the
smallest closed linear subspace of $\hh$ containing
$\bigcup_{j\in J}M_j$. Note that if each $M_j$ is a
closed linear subspaces of $\hh$, then
   \begin{equation}\label{sup1}
\bigvee \big\{P_{M_j} \colon j\in J\big\} =
P_{\,\bigvee_{j\in J} M_j} \quad \text{and} \quad
\bigwedge\big\{P_{M_j} \colon j\in J\big\} =
P_{\bigcap_{j\in J} M_j}.
   \end{equation}
We shall use the symbol ``$\textsc{sot-}\lim$'' to
represent the limit in the strong operator topology on
$\ogr{\hh}$.

Let $E\colon \borel(X) \to
\ogr{\hh}$ be a spectral measure defined on the $\sigma$-algebra $\borel(X)$
of all Borel subsets of a topological space $X$.
 A set $Y\subseteq X$
 is called a \textit{support} of $E$, if $Y$ is the least (with
 respect to set inclusion) closed subset of $X$ such
 that $E(X\backslash Y)=0$. The support of $E$ is
 denoted by $\supp E$. It is known that every
 spectral measure $E$ on $\borel(X)$, where $X$ is a
 separable complete metric space, has closed support
 (cf. \cite[page 129]{Bir-Sol}). We say
that $E$ is \textit{regular} if
 \begin{equation*}
E(\sigma)=\bigvee\{E(\tau)\colon \tau\subseteq\sigma,
\tau \textmd{ is compact in } X\} \textmd{ for every }
\sigma\in\borel(X).
\end{equation*}
For the sake of completeness, we prove the following Proposition (see also \cite[Exercise 6.8.]{MacCluer2009}).
\begin{pro}\label{regular1}
 If $X$ is a separable complete metric space and $E$
is a spectral measure on $\borel(X)$, then $E$ is
regular.
\end{pro}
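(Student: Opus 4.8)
The plan is to reduce the projection-valued statement to a family of scalar Borel measures and then invoke the classical tightness of finite measures on Polish spaces. For each $h\in\hh$ set $\mu_h(\sigma)=\is{E(\sigma)h}{h}$, $\sigma\in\borel(X)$. Since $E$ is a spectral measure, each $\mu_h$ is a finite positive Borel measure on $X$ with total mass $\|h\|^2$. Fix a Borel set $\sigma$ and write $P_\sigma=\bigvee\{E(\tau):\tau\subseteq\sigma,\ \tau\text{ compact}\}$; this supremum is a well-defined orthogonal projection because $\rzut{\hh}$ is a complete lattice, and by \eqref{sup1} it is the projection onto the closed linear span $\bigvee_\tau\ob{E(\tau)}$.

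First I would dispose of the easy inequality $P_\sigma\Le E(\sigma)$. For compact $\tau\subseteq\sigma$ multiplicativity of $E$ gives $E(\sigma)E(\tau)=E(\sigma\cap\tau)=E(\tau)$, so every vector in $\ob{E(\tau)}$ is fixed by $E(\sigma)$ and hence lies in the closed subspace $\ob{E(\sigma)}$. Taking the closed linear span over all such $\tau$ stays inside $\ob{E(\sigma)}$, and \eqref{sup1} yields $P_\sigma\Le E(\sigma)$.

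The substance of the proof is the reverse inequality, and this is where the hypotheses on $X$ are used. The key input is that every finite Borel measure on a separable complete metric space is inner regular with respect to compact sets, that is, $\mu_h(\sigma)=\sup\{\mu_h(\tau):\tau\subseteq\sigma,\ \tau\text{ compact}\}$. This is Ulam's tightness theorem; I would either cite it or reprove it by covering $X$ with countably many balls of radius $1/n$ (using separability), selecting for each $n$ a finite subfamily whose union carries all but $\varepsilon 2^{-n}$ of the mass, and intersecting the closures to obtain a totally bounded closed set, which is compact by completeness. Granting this, fix $h$ and $\varepsilon>0$ and choose compact $\tau\subseteq\sigma$ with $\mu_h(\tau)>\mu_h(\sigma)-\varepsilon$. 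Since $E(\tau)\Le P_\sigma$, we obtain $\is{P_\sigma h}{h}\Ge\is{E(\tau)h}{h}=\mu_h(\tau)>\is{E(\sigma)h}{h}-\varepsilon$, and letting $\varepsilon\to0$ gives $\is{P_\sigma h}{h}\Ge\is{E(\sigma)h}{h}$ for every $h\in\hh$.

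Combining the two inequalities yields $\is{P_\sigma h}{h}=\is{E(\sigma)h}{h}$ for all $h$, and since two bounded selfadjoint operators with equal quadratic forms coincide (polarization), $P_\sigma=E(\sigma)$, which is exactly the asserted regularity. The only delicate point is the scalar tightness statement, which is precisely where separability and completeness of $X$ enter; the remainder is a routine translation between the projection lattice and the associated quadratic forms $\is{E(\cdot)h}{h}$.
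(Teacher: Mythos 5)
Your proof is correct and follows essentially the same route as the paper's: both reduce the statement to the scalar measures $\mu_h(\cdot)=\is{E(\cdot)h}{h}$ and invoke inner regularity (tightness) of finite Borel measures on a separable complete metric space, which is exactly where the hypotheses on $X$ enter. The only cosmetic difference is in the final translation back to projections --- you compare quadratic forms and polarize, whereas the paper shows that any vector orthogonal to every $E(\tau)\hh$ with $\tau\subseteq\sigma$ compact is orthogonal to $E(\sigma)\hh$ --- but the substance is identical.
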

\begin{proof}
 Let $\sigma\in\borel(X)$. It is evident that $$\bigvee\{E(\tau)\colon
\tau\subseteq\sigma, \tau \textmd{ is compact}\}\Le
E(\sigma).$$ To establish the opposite inequality, take
$h\in\hh$ such that $h\perp E(\tau)\hh$ for every
compact set $\tau\subseteq\sigma$. Applying
\cite[Subsection 1.3.22]{Bir-Sol} we get that
$\mu_h(\cdot):=\is{E(\cdot)h}{h}$ is an inner regular Borel
measure (cf. \cite{Rudin}). Choose $g\in\hh$. Then
\begin{align*}
|\is{h}{E(\sigma)g}|&=|\is{E(\sigma)h}{g}|\Le |\is{E(\sigma)h}{h}|\cdot\|g\|\\
&=\sup\big\{\is{E(\tau)h}{h}\colon \tau\subseteq\sigma,
\tau \textmd{ is compact}\big\}\cdot\|g\|=0.
\end{align*}
Thus $h\perp E(\sigma)\hh$, since $g$ was chosen arbitrarily. This completes the proof.
\end{proof}

   Let us recall that
$\supp E_A=\sigma(A)$ for every selfadjoint operator
$A$ in $\hh$, where $\sigma(A)$ denotes the spectrum
of $A$ and $E_A$ is the spectral measure of $A$. In particular, a selfadjoint operator $A$ is
positive if and only if $\supp E_A\subseteq[0,\infty)$
(cf. \cite{Bir-Sol}).
If $\varphi\colon\rbb\to\rbb$ is a Borel function,
then we set
   \begin{align*}
\varphi(A) = \int_\rbb \varphi(x) E_A(\D x).
   \end{align*}
The operator $\varphi(A)$ is selfadjoint. Moreover, if
$\varphi \Ge 0$ a.e.\ $[E]$, then $\varphi(A)$ is
positive. For more information concerning Stone-von
Neumann operator calculus $\varphi \mapsto
\varphi(A)$, we refer the reader to \cite{Bir-Sol,Schmudgen}.

Given a positive selfadjoint operator $A$ in $\hh$ and
$s \in \rbb_+$, we define $A^s=\psi_s(A)$, where
$\psi_s(x) = |x|^s \chi_{[0,\infty)}(x)$ for $x \in
\rbb$ (with the convention that $0^0=1$). This
definition agrees with the usual one for nonnegative
integer exponents. If $A_1$ and $A_2$ are positive
selfadjoint operators in $\hh$ such that
$\dz{A_2^{1/2}} \subseteq \dz{A_1^{1/2}}$ and
$\|A_1^{1/2} h\| \Le \|A_2^{1/2}h\|$ for every $h \in
\dz{A_2^{1/2}}$, then we write $A_1 \Le A_2$ (cf.\
\cite{Schmudgen}). The last definition is easily seen to be
consistent with that for bounded operators.

Finally, let $(X,\mathcal{A},\mu)$ be a measure space and $\varphi\colon X\to\rbb$ be an  $\mathcal{A}$-measurable function. We denote by  $M_{\varphi}$  the multiplication operator by $\varphi$ in $L^2(X,\mu)$, which is defined as follows:
\begin{align*}
\dz{M_{\varphi}}:=\{h\in L^2(X,\mu)\colon \int_X\vert \varphi h\vert^2 \, \textrm{d}\mu<\infty\},\\
M_{\varphi}h:=\varphi h, \quad h\in \dz{M_{\varphi}}.
\end{align*}
It is known that $M_{\varphi}$ is selfadjoint and
\begin{equation}\label{emphi}E_{M_\varphi}(\sigma)h=\chi_{\varphi^{-1}(\sigma)}h
\end{equation}
for every $\sigma\in\borel(\rbb)$ and $h\in L^2(X,\mu)$ (see \cite[Example 5.3.]{Schmudgen}).
   \section{Separately increasing functions}
Here we collect some facts about separately increasing functions we need in this
paper.

Fix $\kappa\in\nbb$. In what follows, $\rbb^{\kappa}$
is regarded as the $\kappa$-dimensional Euclidean
space. Denote by $\{e_j\}_{j=1}^{\kappa}$ the standard
orthonormal basis of $\rbb^{\kappa}$, i.e. $$e_j =(0,\ldots,0,\underbrace{1}\limits_j,0,\ldots,0),\quad j=1,\ldots,\kappa.$$

Throughout this paper, we adhere to the usual
convention that if $a \in \overline{\rbb}^{\kappa}$,
then the jth coordinate of $a$ is denoted by $a_j$;
thus $a = (a_1,\ldots, a_{\kappa})$. Let $a
\in\overline{\rbb}^{\kappa}$ and
$b\in\overline{\rbb}^{\kappa}$. We write $a\Le b$
(resp., $a<b$) if $a_j\Le b_j$ (resp., $a_j< b_j$) for
all $j=1,\ldots,\kappa$. Set
   \begin{align*}
(a,b] & = \{x\in\overline{\rbb}^{\kappa}\colon a< x\Le
b\},
   \\
[a,b] & = \{x\in\overline{\rbb}^{\kappa}\colon a\Le
x\Le b\},
   \\
(-\infty,x]&=\{y\in\rbb^{\kappa}\colon y\Le
x\}=(-\infty,x_1]\times\ldots\times(-\infty,x_{\kappa}],
\quad x \in\rbb^{\kappa},
   \end{align*}
Similarly, we define $(-\infty,x)$, $[x, \infty)$ and
$(x,\infty)$. Given $\iota\in\{1,\ldots,\kappa\}$, $a
\in\rbb^{\kappa}$ and $b\in\rbb^{\kappa}$, we write
$a\Le_\iota b$ if $a_j\Le b_j$ for $j=1,\ldots,\iota$
and $a_j= b_j$ for $j>\iota$. Note that
$(\overline{\rbb}^{\kappa}, \Le)$ and
$(\rbb^{\kappa},\Le_\iota)$ are partially ordered
sets.

Let $(X,\preceq)$ be a partially ordered set. A set
$S\subseteq X$ is called a {\em lower set} in $X$ if
   \begin{equation*}
\{y\in X\colon y \preceq x\} \subseteq S, \quad x\in
S.
   \end{equation*}
The collection of all lower sets in $X$ is denoted by
$L(X,\preceq)$. Note that $\bigcap \Gamma \in
L(X,\preceq)$ for any collection $\Gamma\subseteq
L(X,\preceq)$. Hence, if $\varOmega \subseteq X$, then
the following set
   \begin{equation*}
\downarrow \varOmega:=\bigcap\big\{S\in
L(X,\preceq)\colon \varOmega\subseteq S\big\},
   \end{equation*}
is the smallest lower set in $X$ containing
$\varOmega$. The following properties of lower sets
are easily seen to be true.
   \begin{pro}\label{cwiartka}
If $(X,\preceq)$ is a partially ordered set and
$\varOmega, \varOmega_1, \varOmega_2\subseteq X$, then
   \begin{enumerate}
   \item[{\em(i)}] $\downarrow\varOmega = \bigcup_{x\in \varOmega}\{y\in X\colon
y\preceq x\}$,
   \item[{\em(ii)}] if $\varOmega_1 \subseteq \varOmega_2$, then
$\downarrow \varOmega_1 \subseteq \; \downarrow
\varOmega_2$.
   \end{enumerate}
   \end{pro}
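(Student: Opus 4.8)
The plan is to prove (i) by the standard two-step argument identifying $\downarrow\varOmega$ as the smallest lower set containing $\varOmega$, and then to read off (ii) as an immediate consequence. Write $D := \bigcup_{x\in \varOmega}\{y\in X\colon y\preceq x\}$ for the set on the right-hand side of (i). I would show $\downarrow\varOmega=D$ by proving the two inclusions separately, the first using the minimality built into the definition of $\downarrow\varOmega$ and the second using that $\downarrow\varOmega$ is itself a lower set.

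First I would verify that $D$ is a lower set containing $\varOmega$. The containment $\varOmega\subseteq D$ is immediate from reflexivity of $\preceq$: each $x\in\varOmega$ satisfies $x\preceq x$, so $x$ already lies in the summand of $D$ indexed by $x$. To see that $D$ is a lower set, take $z\in D$ and $w\preceq z$; by definition $z\preceq x$ for some $x\in\varOmega$, and transitivity of $\preceq$ gives $w\preceq x$, whence $w\in D$. Since $D\in L(X,\preceq)$ and $\varOmega\subseteq D$, while $\downarrow\varOmega$ is by definition the intersection of all lower sets containing $\varOmega$, minimality yields $\downarrow\varOmega\subseteq D$.

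For the reverse inclusion I would use that $\downarrow\varOmega$ is itself a lower set—being an intersection of lower sets, as noted just before the statement—and contains $\varOmega$. If $z\in D$, then $z\preceq x$ for some $x\in\varOmega\subseteq\downarrow\varOmega$; since $\downarrow\varOmega$ is a lower set, the relation $z\preceq x$ forces $z\in\downarrow\varOmega$. Hence $D\subseteq\downarrow\varOmega$, and combining the two inclusions gives (i).

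Part (ii) then follows at once from (i): if $\varOmega_1\subseteq\varOmega_2$, the indexing set of the union only grows, so $\downarrow\varOmega_1=\bigcup_{x\in\varOmega_1}\{y\colon y\preceq x\}\subseteq\bigcup_{x\in\varOmega_2}\{y\colon y\preceq x\}=\downarrow\varOmega_2$. I do not expect any genuine obstacle here; the only points requiring care are that $\preceq$ is reflexive and transitive—so that $D$ contains $\varOmega$ and is a lower set—and that an intersection of lower sets is again a lower set, both of which hold by the definitions recalled above.
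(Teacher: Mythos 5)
Your proof is correct. The paper itself gives no proof of this proposition (it is stated as "easily seen to be true"), and your argument—showing that $\bigcup_{x\in\varOmega}\{y\in X\colon y\preceq x\}$ is the smallest lower set containing $\varOmega$ via reflexivity, transitivity, and the minimality in the definition of $\downarrow\varOmega$—is exactly the standard verification the author is implicitly relying on.
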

In the case of the partially ordered set
$(\rbb^\kappa,\Le_\iota)$, we write $\downarrow_\iota
S$ in place of $\downarrow S$. The following fact is
of some importance in this paper.
   \begin{pro}\label{cc}
If $\varOmega\subseteq\rbb^{\kappa}$ is compact, then
$\downarrow_\iota \varOmega$ is closed for every
$\iota\in\{1,\ldots,\kappa\}$.
   \end{pro}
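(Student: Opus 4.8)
The plan is to combine the explicit description of the generated lower set from Proposition \ref{cwiartka}(i) with the compactness of $\varOmega$ to control ``witnesses''. Since $\rbb^\kappa$ is a metric space, it suffices to prove that $\downarrow_\iota\varOmega$ is sequentially closed.

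First I would record, via Proposition \ref{cwiartka}(i), that
$$\downarrow_\iota\varOmega = \bigcup_{x\in\varOmega}\{y\in\rbb^\kappa\colon y\Le_\iota x\},$$
so that a point $y$ belongs to $\downarrow_\iota\varOmega$ exactly when there exists some $x\in\varOmega$ with $y_j\Le x_j$ for $j\Le\iota$ and $y_j=x_j$ for $j>\iota$. Next, take a sequence $\{y^{(n)}\}$ in $\downarrow_\iota\varOmega$ with $y^{(n)}\to y$ in $\rbb^\kappa$, and for each $n$ choose a witness $x^{(n)}\in\varOmega$ with $y^{(n)}\Le_\iota x^{(n)}$. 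The key step is to invoke the compactness of $\varOmega$: after passing to a subsequence, I may assume $x^{(n)}\to x$ for some $x\in\varOmega$. Along this subsequence the defining relations pass to the limit; for $j>\iota$ the equalities $y^{(n)}_j=x^{(n)}_j$ give $y_j=x_j$, and for $j\Le\iota$ the inequalities $y^{(n)}_j\Le x^{(n)}_j$ give $y_j\Le x_j$. Hence $y\Le_\iota x$ with $x\in\varOmega$, i.e.\ $y\in\downarrow_\iota\varOmega$, which establishes closedness.

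The only genuinely essential ingredient is the compactness of $\varOmega$, which is precisely what forces the witnesses $x^{(n)}$ to admit a convergent subsequence whose limit still lies in $\varOmega$. This is where the hypothesis cannot be weakened to mere closedness of $\varOmega$: otherwise the witnesses could escape to infinity and the limit $y$ need not be dominated by any element of $\varOmega$. Beyond this, no real difficulty arises, since the coordinatewise conditions defining $\Le_\iota$ are closed and pass to limits directly; I therefore expect the compactness extraction to be the heart of the argument and everything else to be routine limit-taking.
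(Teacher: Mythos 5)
Your argument is correct and is essentially the paper's own proof: both extract a convergent subsequence of the witnesses in $\varOmega$ via compactness and pass the coordinatewise relations defining $\Le_\iota$ to the limit. The only differences are notational (the paper writes the relation as $x_n\Le_\iota y_n$ with $y_n\in\varOmega$ and does not spell out the coordinatewise limit argument, which you do).
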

   \begin{proof}
Fix $x\in \overline{\downarrow_\iota\varOmega}$. Then
there exists a sequence $\{x_n\}_{n=1}^{\infty}
\subseteq \; \downarrow_\iota\varOmega$ such that
$x_n\to x$ as $n\to\infty$. By Proposition
\ref{cwiartka}(i), we can find a sequence
$\{y_n\}_{n=1}^{\infty}\subseteq\varOmega$ such that
$x_n\Le_{\iota} y_n$ for all $n\in\nbb$. By the
compactness of $\varOmega$, there exists a subsequence
$\{y_{n_k}\}_{k=1}^{\infty}$ of
$\{y_n\}_{n=1}^{\infty}$ which converges to some
$y\in\varOmega$. In particular, we have
   \begin{align*}
x=\displaystyle\lim_{k\to\infty} x_{n_k}\Le_{\iota}
\displaystyle\lim_{k\to\infty}y_{n_k}=y.
   \end{align*}
Applying Proposition \ref{cwiartka}(i) again, we see
that $x\in \downarrow_\iota\varOmega$.
   \end{proof}
   \begin{rem}
First, we note that if $\varOmega \neq \varnothing$,
then the set $\downarrow_\iota \varOmega$ is always
unbounded (see Proposition \ref{cwiartka}(i)). It is
also worth pointing out that the assumption about
compactness of $\varOmega$ in the Proposition \ref{cc}
is essential. Indeed, if $\kappa=2$ and
$\varOmega=\{(-\frac{1}{n},n)\colon n\in \nbb\}$, then
by Proposition \ref{cwiartka}(i),
$\downarrow_2\varOmega=(-\infty,0)\times \rbb$. This
means that $\downarrow_2\varOmega$ is not closed,
though $\varOmega$ is.
   \end{rem}
The following terminology will be frequently used in
this paper.
   \begin{dfn} Let $\kappa_1, \kappa_2\in\nbb$, $\iota \in
\{1, \ldots, \kappa_1\}$ and
$\varOmega\subseteq\rbb^{\kappa_1}$. We say that a
function $\varphi\colon \varOmega\rightarrow\rbb^{\kappa_2}$ is {\em
$\iota$-increasing} if
   \begin{equation*}
\text{$x\Le_\iota y \Rightarrow \varphi(x)\Le
\varphi(y)$ for all $x,y\in\varOmega$.}
   \end{equation*}
The function $\varphi$ is called {\em increasing} if
it is $\kappa_1$-increasing.
   \end{dfn}
As shown below, lower sets can be used to characterize
increasing functions.
   \begin{pro}\label{oddzielros}
Let $\kappa_1,\kappa_2\in\nbb$ and $\iota \in \{1,
\ldots, \kappa_1\}$. Then a function $\varphi\colon
\rbb^{\kappa_1}\rightarrow\rbb^{\kappa_2}$ is
$\iota$-increasing if and only if
   \begin{align*}
\text{$\{\varphi\Le y\} \in
L(\rbb^{\kappa_1},\Le_\iota)$ for all
$y\in\rbb^{\kappa_2}$ $($equivalently$\colon$ for all
$y\in \varphi(\rbb^{\kappa_1})$$)$,}
   \end{align*}
where $\{\varphi\Le y\}:=\{x\in\rbb^{\kappa_1}\colon
\varphi(x)\Le y\}$.
   \end{pro}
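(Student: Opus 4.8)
The plan is to unwind both definitions and observe that the statement is essentially a tautological reformulation: once one writes out what ``$\{\varphi\Le y\}$ is a lower set'' means, the claim decomposes into two one-line implications, plus a trivial remark to handle the parenthetical equivalence. So I would structure the proof as two implications.

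First I would show that if $\varphi$ is $\iota$-increasing, then $\{\varphi\Le y\}\in L(\rbb^{\kappa_1},\Le_\iota)$ for every $y\in\rbb^{\kappa_2}$. Fix $y$ and suppose $x\in\{\varphi\Le y\}$, i.e. $\varphi(x)\Le y$. For any $z$ with $z\Le_\iota x$, the $\iota$-increasing property gives $\varphi(z)\Le\varphi(x)\Le y$, hence $z\in\{\varphi\Le y\}$; by the definition of a lower set this is exactly the assertion that $\{\varphi\Le y\}\in L(\rbb^{\kappa_1},\Le_\iota)$.

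Conversely, I would assume the lower-set condition holds for every $y\in\varphi(\rbb^{\kappa_1})$ and deduce that $\varphi$ is $\iota$-increasing. Take $x,x'\in\rbb^{\kappa_1}$ with $x\Le_\iota x'$ and set $y:=\varphi(x')$, which lies in $\varphi(\rbb^{\kappa_1})$. Then $x'\in\{\varphi\Le y\}$ trivially, and since $\{\varphi\Le y\}$ is a lower set in $(\rbb^{\kappa_1},\Le_\iota)$ and $x\Le_\iota x'$, we obtain $x\in\{\varphi\Le y\}$, that is $\varphi(x)\Le\varphi(x')$. This is precisely the $\iota$-increasing condition.

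Finally, the parenthetical equivalence follows by closing a cycle: the first implication yields the condition for all $y\in\rbb^{\kappa_2}$, which in particular restricts to all $y\in\varphi(\rbb^{\kappa_1})$, and the second implication shows that this weaker restriction already forces $\varphi$ to be $\iota$-increasing. I do not expect any real obstacle; the only point needing care is keeping the two orders straight---namely $\Le_\iota$ on the domain $\rbb^{\kappa_1}$ against the componentwise $\Le$ on the codomain $\rbb^{\kappa_2}$---and noticing that the reverse implication uses the hypothesis only for $y$ in the range, which is exactly what makes the weaker formulation equivalent to the full one.
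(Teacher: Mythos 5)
Your proof is correct and follows essentially the same route as the paper: the forward direction is the immediate unwinding of definitions (which the paper omits as obvious), and your converse argument---taking $x\Le_\iota x'$, setting $y=\varphi(x')$, and using the lower-set property of $\{\varphi\Le y\}$---is exactly the paper's argument. Your explicit handling of the parenthetical equivalence via the cycle of implications is a harmless addition.
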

   \begin{proof} It is enough to prove the sufficiency.
Take $x_1,x_2\in\rbb^{\kappa_1}$ such that
$x_1\Le_\iota x_2$. Set $\varOmega=\{\varphi\Le
\varphi(x_2)\}$. Since $\varOmega\in
L(\rbb^{\kappa_1},\Le_\iota)$ and $x_2\in\varOmega$,
we see that $x_1\in\varOmega$. Thus $\varphi(x_1)\Le
\varphi(x_2)$.
   \end{proof}
   Now we prove that the distance function to a
$\iota$-lower set is $\iota$-increasing.
   \begin{pro}\label{oepsilon}
Let $\kappa \in \nbb$ and
$\iota\in\{1,\ldots,\kappa\}$. Suppose \mbox{$\Vert
\cdot \Vert$} is a norm on $\rbb^{\kappa}$ and
$\varOmega \in L(\rbb^\kappa,\Le_\iota)$. If
$\varOmega \neq \varnothing$, then
   \begin{enumerate}
\item[{\em(i)}] the function $d_\varOmega\colon
\rbb^{\kappa}\to \rbb$ defined by
   \begin{align*}
d_{\varOmega}(x)=\inf\{\Vert x-y\Vert\colon
y\in\varOmega \}, \quad x\in\rbb^\kappa,
   \end{align*}
is $\iota$-increasing,
\item[{\em(ii)}]  $\varOmega^{(\varepsilon)}:=
\{d_{\varOmega} \Le \varepsilon\}\in
L(\rbb^\kappa,\Le_\iota)$ for every $\varepsilon \in
(0,\infty)$.
   \end{enumerate}
   \end{pro}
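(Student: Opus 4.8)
The plan is to prove (i) by a direct comparison argument and then to read off (ii) from Proposition \ref{oddzielros}. Before anything else I would note that $d_\varOmega$ is genuinely $\rbb$-valued: since $\varOmega\neq\varnothing$ one may fix some $y_0\in\varOmega$, and then $0\Le d_\varOmega(x)\Le\|x-y_0\|<\infty$ for every $x\in\rbb^\kappa$, so no finiteness or attainment issue arises.

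For (i) I take $x,y\in\rbb^\kappa$ with $x\Le_\iota y$ and aim to show $d_\varOmega(x)\Le d_\varOmega(y)$. The crux is a translation trick that uses both the rigid shape of $\Le_\iota$ (the last $\kappa-\iota$ coordinates are frozen) and the hypothesis that $\varOmega$ is a $\Le_\iota$-lower set. Given an arbitrary $z\in\varOmega$, I define $z'\in\rbb^\kappa$ by $z'_j=z_j-(y_j-x_j)$ for $j\Le\iota$ and $z'_j=z_j$ for $j>\iota$. Since $x\Le_\iota y$ gives $y_j-x_j\Ge 0$ for $j\Le\iota$, one has $z'\Le_\iota z$; as $\varOmega\in L(\rbb^\kappa,\Le_\iota)$ and $z\in\varOmega$, this forces $z'\in\varOmega$. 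A coordinatewise check, using $x_j=y_j$ for $j>\iota$, shows $x-z'=y-z$, whence $\|x-z'\|=\|y-z\|$. Consequently $d_\varOmega(x)\Le\|x-z'\|=\|y-z\|$, and taking the infimum over $z\in\varOmega$ delivers $d_\varOmega(x)\Le d_\varOmega(y)$, i.e. $\iota$-monotonicity.

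Part (ii) then costs nothing: by (i) the map $d_\varOmega\colon\rbb^\kappa\to\rbb$ is $\iota$-increasing, so Proposition \ref{oddzielros} with $\kappa_1=\kappa$ and $\kappa_2=1$ yields $\varOmega^{(\varepsilon)}=\{d_\varOmega\Le\varepsilon\}\in L(\rbb^\kappa,\Le_\iota)$ for every $\varepsilon\in\rbb$, in particular for $\varepsilon\in(0,\infty)$.

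The single conceptual step is spotting the comparison point $z'$. Once one sees that translating $z$ by $-(y-x)$ in the first $\iota$ coordinates both keeps the point inside the lower set and reproduces the difference vector $y-z$ exactly, the argument closes with no input from the norm beyond its definition — no convexity, triangle inequality, or smoothness is needed, and one never has to assume the infimum is attained. I expect the main obstacle to be the temptation to overcomplicate the estimate by perturbing near-minimizers of $d_\varOmega(y)$; fixing each $z$ separately and translating it is the clean route that sidesteps this.
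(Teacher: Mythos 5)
Your proof is correct and uses the same key idea as the paper: translating a point of $\varOmega$ by $x-y$ to stay inside the lower set while preserving the distance. The only cosmetic difference is that the paper applies this translation to an $\varepsilon$-near-minimizer of $d_\varOmega(y)$ rather than to every $z\in\varOmega$ followed by an infimum; part (ii) is handled identically via Proposition \ref{oddzielros}.
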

   \begin{proof}(i) Let $x,y\in\rbb^\kappa$ be
such that $x\Le_\iota y$. Fix $\varepsilon \in
(0,\infty)$ and take $y_{\varepsilon}\in\varOmega$
such that $\Vert
y-y_{\varepsilon}\Vert<d_{\varOmega}(y)+\varepsilon$.
Then $x_{\varepsilon}:=y_{\varepsilon}+x-y\Le_\iota
y_{\varepsilon}$, which implies that
$x_{\varepsilon}\in\varOmega$ (because $\varOmega\in
L(\rbb^\kappa,\Le_\iota)$). Since $\Vert
x-x_{\varepsilon}\Vert=\Vert
y-y_{\varepsilon}\Vert<d_{\varOmega}(y)+\varepsilon$,
we deduce that $d_{\varOmega}(x) <
d_{\varOmega}(y)+\varepsilon$. This yields
$d_{\varOmega}(x)\Le d_{\varOmega}(y)$.

(ii) This follows from (i) and Proposition
\ref{oddzielros}.
   \end{proof}
We conclude this section with an example showing that
increasing function $f\colon\rbb^\kappa\to\rbb$ may
not be Borel (this is possible only for $\kappa\Ge 2$,
see \cite[Lemma 6.1.]{Paneta2012}).
   \begin{exa}
Let $V$ be a subset of $\rbb$ which is not Borel
measurable (cf.\ \cite[Corollary, p.\ 53]{Rudin}). Set
$S=\{(x,y)\in\rbb^2\colon x+y=0 \}$,
$S_1=\{(x,y)\in\rbb^2\colon x+y>0\}$ and $S_2=S\cap
(V\times \rbb)$. It is easily seen that the function
$f:=\chi_{S_1\cup S_2}$ is increasing. If $f$ were a
Borel function, then the set $f^{-1}(\{1\})=S_1\cup
S_2$ would be a Borel subset of $\rbb^2$. This and the
fact that $S_1$ is an open set in $\rbb^2$ which is
disjoint from $S_2$ would imply that
$S_2\in\borel(\rbb^2)$. Since the function
$\varphi\colon \rbb\ni x\rightarrow (x,-x)\in\rbb^2$
is continuous and $V =\varphi^{-1}(S_2)$, this would
give a contradiction.
   \end{exa}

\section{Some integral inequalities on $\rbb^\kappa$}
In this section we provide integral inequalities for separately incresing function. These inequalities are related to the multidimensional spectral order. The main result of this section is Theorem \ref{3.1}, which is a generalization of \cite[Theorem 4.1.]{Paneta2012}. In the proof of this theorem, we will need the following lemma which characterizes
regularity of lower sets.
\begin{lemma}\label{regcwiar}
Let $\mu$ be a finite Borel measure on $\rbb^\kappa$
and let $\iota\in\{1,\ldots,\kappa\}$. If $\varOmega \in
L(\rbb^\kappa,\leq_\iota)\cap\borel(\rbb^\kappa)$, then
\begin{equation*}
\mu(\varOmega)=\sup\{\mu(D)\colon D\subseteq\varOmega,
D=\overline{D}\in L(\rbb^\kappa,\leq_\iota)\}.
\end{equation*}
\end{lemma}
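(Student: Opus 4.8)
The plan is to approximate $\varOmega$ from inside by a compact set and then ``saturate'' that set downward in the order $\Le_\iota$, the point being that downward saturation of a \emph{compact} set produces a \emph{closed} lower set by Proposition \ref{cc}. Since $\mu$ is a finite Borel measure on the separable complete metric (hence Radon) space $\rbb^\kappa$, it is inner regular with respect to compact sets (cf.\ \cite{Rudin}). Thus, fixing $\varepsilon\in(0,\infty)$, I would first choose a compact set $K\subseteq\varOmega$ with $\mu(K)>\mu(\varOmega)-\varepsilon$.

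Next I would set $D:=\downarrow_\iota K$. By Proposition \ref{cc} the set $D$ is closed, so $D=\overline{D}$, and by definition $D\in L(\rbb^\kappa,\Le_\iota)$. Because $\varOmega$ is itself a lower set we have $\downarrow_\iota\varOmega=\varOmega$, so Proposition \ref{cwiartka}(ii) applied to $K\subseteq\varOmega$ yields $D=\downarrow_\iota K\subseteq\downarrow_\iota\varOmega=\varOmega$. Hence $D$ is an admissible competitor in the supremum, and from $K\subseteq D\subseteq\varOmega$ I get $\mu(\varOmega)-\varepsilon<\mu(K)\Le\mu(D)\Le\mu(\varOmega)$. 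Letting $\varepsilon\to 0$, and noting that every admissible $D$ trivially satisfies $\mu(D)\Le\mu(\varOmega)$ (as $D\subseteq\varOmega$), gives the claimed equality.

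The main point to get right is why compactness of $K$ is indispensable: it is precisely Proposition \ref{cc} that guarantees $\downarrow_\iota K$ is closed, and the remark following that proposition shows this can fail for merely closed (non-compact) sets. Consequently the correct inner-approximation tool here is inner regularity by \emph{compact} sets rather than by closed sets; with that in hand the rest is a direct combination of the monotonicity of the lower-closure operator and the hypothesis that $\varOmega$ is a lower set. I do not expect any genuine obstacle beyond ensuring that the approximating compact set lies inside $\varOmega$, so that its downward saturation remains inside $\varOmega$.
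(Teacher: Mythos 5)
Your proof is correct and follows essentially the same route as the paper: inner regularity by compact sets, followed by the downward saturation $D=\downarrow_\iota K$, which is closed by Proposition \ref{cc} and stays inside $\varOmega$ by Proposition \ref{cwiartka}(ii). The only cosmetic difference is that you argue with an explicit $\varepsilon$ while the paper writes the same estimate as a chain of suprema.
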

\begin{proof}
Let $\varOmega \in
L(\rbb^\kappa,\leq_\iota)\cap\borel(\rbb^\kappa)$.
Applying \cite[Theorem 2.18.]{Rudin}, Proposition
\ref{cwiartka}, and Proposition \ref{cc}, we get that
\begin{align*}
 \mu(\varOmega)&=\sup\{\mu(K)\colon K\subseteq\varOmega,\, K\textmd{ is compact} \}\\
&\Le\sup\{\mu(\downarrow_\iota K)\colon K\subseteq\varOmega,\, K\textmd{ is compact}\}\\
&\Le\sup\{\mu(D)\colon
D\subseteq\varOmega,\, D=\overline{D}\in L(\rbb^\kappa,\leq_\iota) \}\Le\mu(\varOmega).
\end{align*}
This completes the proof.
\end{proof}
\begin{thm}\label{3.1}
   Let $\mu_1$ and $\mu_2$ be a finite Borel measure
   on $\rbb^\kappa$. We consider the following
   conditions\/{\em :}
   \begin{enumerate}[{\em(i)}]
   \item $\mu_2(\varOmega) \Le \mu_1(\varOmega)$ for every $\varOmega \in L(\rbb^\kappa,\leq_\iota)\cap\borel(\rbb^\kappa)$,
\item $\int_{\rbb^\kappa} f \D \mu_{1}\Le
\int_{\rbb^\kappa} f \D \mu_{2}$ for every $\iota$-increasing Borel
function $f\colon\rbb^\kappa\to[0,\infty)$,
   \item $\int_{\rbb^\kappa} f \D \mu_{1}\Le
\int_{\rbb^\kappa} f \D \mu_{2}$ for every $\iota$-increasing Borel
function $f\colon\rbb^\kappa\to\rbb$ such
that $\int_{\rbb^\kappa} |f| \D \mu_{2} < \infty$
$($the integral $\int_{\rbb^\kappa} f \D \mu_{1}$ may
be $-\infty$$)$,
 \item $\int_{\rbb^\kappa} f \D \mu_{1}\Le
\int_{\rbb^\kappa} f \D \mu_{2}$ for every $\iota$-increasing bounded
continuous function\\ $f\colon\rbb^\kappa\to\rbb$,
 \item $\int_{\rbb^\kappa} f \D \mu_{1}\Le
\int_{\rbb^\kappa} f \D \mu_{2}$ for every nonnegative $\iota$-increasing bounded
continuous function $f\colon\rbb^\kappa\to[0,\infty)$.
   \end{enumerate}
Then
\begin{enumerate}[\em(a)]
\item condition {\em (i)} $($respectively
{\em (ii)}, {\em (iii)}$)$ implies that
$\mu_2(\rbb^\kappa) \Le \mu_{1}(\rbb^\kappa)$
$($respectively $\mu_1(\rbb^\kappa) \Le
\mu_{2}(\rbb^\kappa)$, $\mu_1(\rbb^\kappa) =
\mu_{2}(\rbb^\kappa)$$)$.
\item Inequality
$\mu_{1}(\rbb^\kappa) \Le \mu_{2}(\rbb^\kappa)$ and
{\em (i)} imply that $\mu_1(\rbb^\kappa) =
\mu_{2}(\rbb^\kappa)$.
\item If $\mu_1(\rbb^\kappa) =
\mu_2(\rbb^\kappa)$, then all conditions {\em (i)-(v)}
are equivalent.
\end{enumerate}
   \end{thm}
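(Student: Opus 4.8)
The plan is to handle parts (a) and (b) by testing the five conditions against trivial sets and functions, and then to obtain (c) by running the cyclic chain of implications
\[
\text{(i)} \Rightarrow \text{(ii)} \Rightarrow \text{(iii)} \Rightarrow \text{(iv)} \Rightarrow \text{(v)} \Rightarrow \text{(i)},
\]
where every step will use the standing hypothesis $\mu_1(\rbb^\kappa) = \mu_2(\rbb^\kappa)$. For part (a), I would first note that $\rbb^\kappa \in L(\rbb^\kappa,\leq_\iota)\cap\borel(\rbb^\kappa)$, so applying (i) with $\varOmega = \rbb^\kappa$ gives $\mu_2(\rbb^\kappa) \Le \mu_1(\rbb^\kappa)$ at once; testing (ii) against the constant $f \equiv 1$ (trivially $\iota$-increasing, nonnegative, Borel) yields $\mu_1(\rbb^\kappa) \Le \mu_2(\rbb^\kappa)$; and testing (iii) against both $f \equiv 1$ and $f \equiv -1$ (each integrable against the finite measure $\mu_2$) gives the two opposite inequalities, hence $\mu_1(\rbb^\kappa) = \mu_2(\rbb^\kappa)$. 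Part (b) is then immediate: by part (a), condition (i) forces $\mu_2(\rbb^\kappa) \Le \mu_1(\rbb^\kappa)$, which combined with the assumed $\mu_1(\rbb^\kappa) \Le \mu_2(\rbb^\kappa)$ yields equality.

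For (i) $\Rightarrow$ (ii) I would use the layer-cake identity $\int_{\rbb^\kappa} f \D\mu = \int_0^\infty \mu(\{f > t\})\D t$, valid for nonnegative Borel $f$. For $\iota$-increasing $f$ the sublevel set $\{f \leq t\}$ belongs to $L(\rbb^\kappa,\leq_\iota)\cap\borel(\rbb^\kappa)$ by Proposition \ref{oddzielros}, so (i) gives $\mu_2(\{f\leq t\}) \Le \mu_1(\{f\leq t\})$; passing to complements and using $\mu_1(\rbb^\kappa)=\mu_2(\rbb^\kappa)$ turns this into $\mu_1(\{f>t\}) \Le \mu_2(\{f>t\})$, and integrating in $t$ delivers (ii). For (ii) $\Rightarrow$ (iii) I would first treat bounded $f$: adding a large constant $M$ makes $f+M$ nonnegative and $\iota$-increasing, so (ii) applies and the constant cancels by equal mass. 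For general $f$ with $\int_{\rbb^\kappa}|f|\D\mu_2 < \infty$ I would truncate, $f_N := \max(\min(f,N),-N)$, apply the bounded case, and pass to the limit. Here applying (ii) to the $\iota$-increasing nonnegative function $f^+$ gives $\int_{\rbb^\kappa} f^+\D\mu_1 \Le \int_{\rbb^\kappa} f^+\D\mu_2 < \infty$, so $\int_{\rbb^\kappa} f\D\mu_1$ is well defined in $[-\infty,\infty)$; the negative parts are handled by monotone convergence and the right-hand side by dominated convergence.

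The steps (iii) $\Rightarrow$ (iv) and (iv) $\Rightarrow$ (v) are mere restrictions of the test class (a bounded continuous $f$ satisfies $\int_{\rbb^\kappa}|f|\D\mu_2 < \infty$, and a nonnegative one is a special case). The substantive closing step is (v) $\Rightarrow$ (i). Here I would reduce, via Lemma \ref{regcwiar} applied to $\mu_2$, to proving $\mu_2(D) \Le \mu_1(D)$ for closed lower sets $D \in L(\rbb^\kappa,\leq_\iota)$ (the empty $D$ being trivial). For nonempty such $D$ the distance function $d_D$ is continuous and $\iota$-increasing by Proposition \ref{oepsilon}(i), so the functions $g_\varepsilon := \min(d_D/\varepsilon,\,1)$ are nonnegative, bounded, continuous and $\iota$-increasing, hence admissible in (v). As $\varepsilon \downarrow 0$ one has $g_\varepsilon \uparrow \chi_{\rbb^\kappa\backslash D}$ (using that $D$ is closed), so passing to the limit in $\int_{\rbb^\kappa} g_\varepsilon\D\mu_1 \Le \int_{\rbb^\kappa} g_\varepsilon\D\mu_2$ and cancelling the total masses gives $\mu_2(D) \Le \mu_1(D)$; taking the supremum over $D$ then yields $\mu_2(\varOmega) \Le \mu_1(\varOmega)$.

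The routine portions are the layer-cake computation and the truncation limit; the real work is concentrated in (v) $\Rightarrow$ (i), where the obstacle is that (v) only controls continuous test functions whereas (i) concerns indicators of Borel lower sets. Bridging this gap is exactly what the two structural results prepared earlier accomplish: the inner regularity of lower sets by \emph{closed} lower sets (Lemma \ref{regcwiar}) supplies the measure-theoretic approximation, and the $\iota$-monotonicity of the distance function (Proposition \ref{oepsilon}) provides the continuous $\iota$-increasing approximants of the indicator. The only other place I expect to demand care is the $N\to\infty$ passage in (ii) $\Rightarrow$ (iii), where one must avoid an $\infty-\infty$ by bounding $\int_{\rbb^\kappa} f^+\D\mu_1$ through (ii).
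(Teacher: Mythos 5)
Your proof is correct, and the two halves of it compare differently with the paper's argument. The closing step (v)$\Rightarrow$(i) is essentially identical to the paper's: reduce to closed lower sets via Lemma \ref{regcwiar}, approximate $\chi_{\rbb^\kappa\backslash D}$ from below by the continuous $\iota$-increasing functions $\min(d_D/\varepsilon,1)$ (the paper uses $\min(1,nd_D)$), apply monotone convergence, and cancel the equal total masses. Where you genuinely diverge is in getting from (i) to the integral conditions. The paper proves (i)$\Rightarrow$(iii) in one stroke by pushing $\mu_1,\mu_2$ forward along $f$ to measures $\nu_1,\nu_2$ on $\rbb$, observing via Proposition \ref{oddzielros} that (i) yields $\nu_2((-\infty,x])\Le\nu_1((-\infty,x])$, and then invoking the one-dimensional version of the theorem from \cite{Paneta2012}; the remaining implications (iii)$\Rightarrow$(ii), (iii)$\Rightarrow$(iv), (iv)$\Rightarrow$(v) are then trivial restrictions, and (ii)$\Rightarrow$(i) is done by testing against $\chi_{\rbb^\kappa\backslash\varOmega}$. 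You instead run the linear cycle (i)$\Rightarrow$(ii)$\Rightarrow$(iii)$\Rightarrow$(iv)$\Rightarrow$(v)$\Rightarrow$(i), proving (i)$\Rightarrow$(ii) by the layer-cake identity (which in effect reproves the one-dimensional result inline) and then upgrading (ii) to (iii) by adding constants and truncating, with the $\infty-\infty$ danger correctly defused by first bounding $\int f^+\D\mu_1$ through (ii). Your route is self-contained and avoids the external citation at the cost of the extra truncation step; the paper's route is shorter on the page because it delegates the analytic content to the $\kappa=1$ case already established elsewhere. Both are valid, and your handling of the limit passages (monotone convergence on the $\mu_1$ side, dominated convergence on the $\mu_2$ side) is sound.
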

\begin{proof} We only prove (c), since (a) and (b) are obvious. Assume that  $\mu_1(\rbb^\kappa) = \mu_{2}(\rbb^\kappa)$.

 (i)$\Rightarrow$(iii) Let $f$ be a real $\iota$-increasing Borel
function
 such that $\int_{\rbb^\kappa} |f| \D \mu_{2} <
 \infty$. Define
 $\nu_j(\sigma):=\mu_j(f^{-1}(\sigma))$ for
 $\sigma\in\borel(\rbb)$ and $j=1,2$. It is evident
 that $\nu_1$ and $\nu_2$ are finite
 Borel measure on $\rbb$ and
 \begin{align}\label{Rn12}
\nu_1(\rbb)=\mu_1(\rbb^\kappa)=\mu_2(\rbb^\kappa)=\nu_2(\rbb).
\end{align}
 Moreover,
\begin{align}\label{ni1ni2}
\nu_2
((-\infty,x])=\mu_2(f^{-1}(-\infty,x]))\Le\mu_1(f^{-1}(-\infty,x]))=\nu_1
((-\infty,x])
\end{align}
for every $x\in\rbb$ since $f^{-1}((-\infty,x])$ is a lower set in $(\rbb^\kappa,\leqslant_\iota)$ by Proposition
 \ref{oddzielros}.
 Applying the measure transport theorem \cite[p. 163, Theorem C.]{Halmos1950}, we obtain
\begin{align*}
 \int_{\rbb} |x| \D \nu_{2}(x)=\int_{\rbb^\kappa} |f|
\D \mu_{2}< \infty.
\end{align*}
Thus, by the measure transport theorem, \eqref{Rn12}, \eqref{ni1ni2}, and \cite[Theorem 4.1.]{Paneta2012},
we have
 \begin{align*}
\int_{\rbb^\kappa} f \D \mu_{1}=\int_{\rbb} x \D
\nu_{1}(x)\Le\int_{\rbb} x \D \nu_{2}(x)
=\int_{\rbb^\kappa} f \D \mu_{2}.
 \end{align*}

(iii)$\Rightarrow$(ii), (iii)$\Rightarrow$(iv), and
(iv)$\Rightarrow$(v) are obvious.

(ii)$\Rightarrow$(i) Let $\varOmega \in
\borel(\rbb^\kappa)\cap L(\rbb^\kappa,\leq_\iota)$.
Then
$f=\chi_{\rbb^\kappa\backslash\varOmega}$ is a $\iota$-increasing Borel function. By assumption we obtain
  \begin{align*}
 \mu_1(\rbb^\kappa\backslash\varOmega)=\int_{\rbb^\kappa} \chi_{\rbb^\kappa\backslash\varOmega} \D \mu_1
 \overset{\textrm{(ii)}}\Le
 \int_{\rbb^\kappa} \chi_{\rbb^\kappa\backslash\varOmega} \D \mu_2
 =\mu_2(\rbb^\kappa\backslash\varOmega).
   \end{align*}
Hence $\mu_2(\varOmega) \Le \mu_1(\varOmega)$, since
$\mu_1(\rbb^\kappa)=\mu_2(\rbb^\kappa)$.

(v)$\Rightarrow$(i) According to Lemma \ref{regcwiar}
it is sufficient to show that
\begin{equation*}
 \mu_2(D)\Le \mu_1(D),
\end{equation*}
 for every closed set $D\in
L(\rbb^\kappa,\leq_\iota)$.

 Fix a closed set $D\in L(\rbb^\kappa,\leq_\iota)$. For every $n\in\nbb$, we
define function
$f_n\colon\rbb^\kappa\to\rbb$ by
$f_n(x):=\min\{1,nd_D(x)\}$, $x\in\rbb^\kappa$.
It is easily seen that $0\Le f_n(x)\Le 1$, $f_n(x)\Le
f_{n+1}(x)$ and
$\lim_{n\to\infty}f_n(x)=\chi_{\rbb^\kappa\backslash
D}(x)$ for every $x\in\rbb^\kappa$. Note that $f_n$'s
are continuous and $\iota$-increasing, since $d_D$
is $\iota$-increasing by Proposition \ref{oepsilon}. It follows from Lebesgue's monotone convergence theorem and assumption that
\begin{align*}
\mu_1(\rbb^\kappa\backslash D)= \lim_{n\to\infty}
\int_{\rbb^\kappa} f_n \D
\mu_{1}\overset{\textrm{(v)}}{\Le}
\lim_{n\to\infty}\int_{\rbb^\kappa} f_n \D \mu_{2} =
\mu_2(\rbb^\kappa\backslash D).
\end{align*}
This and equality $\mu_1(\rbb^\kappa)=\mu_2(\rbb^\kappa)$ imply that
$\mu_2(D)\Le\mu_1(D)$.
\end{proof}
\begin{rem}
   Let $\mu_1$ and $\mu_2$ be finite nonnegative
Borel measures on $\rbb^\kappa$ such that
$\mu_1(\rbb^\kappa)=\mu_2(\rbb^\kappa)$. In this
settings the condition
\begin{equation*}
\mu_2((-\infty,x]) \Le \mu_1((-\infty,x]),\:
x\in\rbb^\kappa,
\end{equation*}
does not have to imply conditions (i)-(v) in
Theorem \ref{3.1}. Indeed, take two Borel measures
$\mu_1$ and $\mu_2$ on $\rbb^2$ defined by the
following formulas
\begin{equation*}
\mu_1:=\delta_{(0,0)}+\delta_{(1,1)} \textmd{ and }
\mu_2:=\delta_{(0,1)}+\delta_{(1,0)},
\end{equation*}
 where $\delta_{(0,0)}$, $\delta_{(0,1)}$,
 $\delta_{(1,0)}$ and $\delta_{(1,1)}$ are Dirac
 measures on $\borel(\rbb^2)$. It is evident, that
 $\mu_1(\rbb^2)=\mu_2(\rbb^2)=2$ and
 $\mu_2((-\infty,x]) \Le \mu_1((-\infty,x])$ for every
 $x\in\rbb^2$. At the same time $\mu_1(S) < \mu_2(S)$,
 where $S=\{(x_1,x_2)\in\rbb^2\colon x_1+x_2\Le 1\}$.
 This means that measures $\mu_1$ and $\mu_2$ do not
 satisfy the condition (i). Hence all of conditions
 (i)-(v) in Theorem \ref{3.1} do not hold.
\end{rem}
   \section{Multidimensional spectral order}\label{swp}
In this section we generalize the notion of spectral
order to the case of finite families of pairwise commuting selfadjoint
operators. First we give necessary background on
joint spectral measures and multivariable resolution of the identity.

 Let $\B_{cs}(\hh,\kappa):=\shk\cap\ogr{\hh}^\kappa$. If $\A=(A_1,\ldots, A_{\kappa})\in\shk$,
then there exists the
unique spectral measure $E_{\A}\colon
\borel(\rbb^{\kappa}) \to \ogr{\hh}$, called the {\em
joint spectral measure} of $\A$, such that
   \begin{align}\label{wspolnamiara}
A_j=\int_{\rbb^{\kappa}} x_j  E_{\A}(\D x), \quad j=1,
\ldots, \kappa.
   \end{align}
In fact, $E_{\A}$ coincides with the product of the
spectral measures of the operators $A_1, \ldots,
A_{\kappa}$ (cf.\ \cite{Bir-Sol}). This relationship can be made more explicit by the following two equivalent equations
\begin{equation}\label{eapij}
E_{A_j}(\sigma)=E_{\A}(\pi_j^{-1}(\sigma)), \quad
\sigma\in\borel(\rbb),\, j=1,\ldots,\kappa,
\end{equation}
where
$\pi_j\colon\rbb^{\kappa}\ni(x_1,\ldots,x_{\kappa})\rightarrow
x_j\in\rbb$, and
\begin{equation}\label{miaraEa}
E_{\textbf{A}}(\sigma_1\times\ldots\times\sigma_{\kappa})=E_{A_1}(\sigma_1)\ldots
E_{A_{\kappa}}(\sigma_{\kappa}),\quad \sigma_1,\ldots,\sigma_{\kappa}\in\borel(\rbb).
\end{equation}

    The {\em joint spectral
distribution} $F_{\A}$ of $\A$ is defined by
   \begin{align}\label{fa}
F_{\A}(x) = E_{\A}((-\infty,x]), \quad
x\in\rbb^{\kappa}.
   \end{align}
As in the one-variable case (cf.\ \cite{Bir-Sol}), one can
build the theory of multivariable resolutions of the identity independently of spectral measures.  In Section \ref{app1}, for
the reader's convenience, we recall the definition of
an abstract multivariable resolution of the identity and we
outline an idea how to construct
spectral measures on $\borel(\rbb^\kappa)$ by using multivariable resolution of the identity.

Let $\A\in\shk$.
Given a Borel function $\varphi\colon\rbb^\kappa\to\rbb$ we define
\begin{equation*}
\varphi(\A)=\int_{\rbb^{\kappa}}\varphi(x)E_{\A}(\D x).
\end{equation*}
 The operator $\varphi(\bf{A})$ is selfadjoint. Moreover, if $\varphi\Ge 0$, then $\varphi(\bf{A})$ is positive.

After these preliminary remarks, we can state the definition of spectral order in multidimensional case.
\begin{dfn}
 Let $\A,\B\in\shk$. We write $\A\Lec \B$, if $F_{\B}(x)\Le F_{\A}(x)$
 for every $x\in\rbb^{\kappa}$.
\end{dfn}
It is easily seen that the relation ''$\Lec$'' is a partial order in $\shk$, since there is a one-to-one correspondence between the set of all resolutions of the identity on
$\rbb^\kappa$ and the set $\shk$. This order will be called \textit{multidimensional spectral order}. If $\kappa=1$, then we obtain the definition of spectral order for selfadjoint operators (cf. \cite{Olson1971},\cite{Paneta2012}).
Now we are going to investigate the fundamental properties of multidimensional spectral order ''$\Lec$''.  First we are going to answer the question which Borel functions $\varphi\colon\rbb^\kappa\to\rbb$ preserve the multidimensional spectral order, i.e. the map $\Phi\colon(\shk,\Lec)\ni\A\to \varphi(\A)\in(\mathcal{S}_c(\hh,1),\Lec)$ is a morphism in the category of partially ordered set. We begin by showing that a Borel function $\varphi\colon\rbb^\kappa\to\rbb$ preserving the multidimensional spectral order has to be increasing.
\begin{pro}
 Let $\kappa\in\nbb$ and let $\hh$ be a Hilbert space
such that $\dim \hh\geqslant 1$. If $\varphi\colon
\rbb^{\kappa}\rightarrow \rbb$ is a Borel function
satisfying the following condition
\begin{equation}\label{niez}
\A\Lec \B\implies \varphi(\A)\Lec \varphi(\B)
\end{equation}
for every $\A,\B\in\shk$, then $\varphi$ is an increasing function.
\end{pro}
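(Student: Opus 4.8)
The plan is to test the hypothesis on the simplest possible members of $\shk$, namely the scalar tuples, for which every object entering the definition of $\Lec$ can be computed by hand. Fix $a,b\in\rbb^\kappa$ with $a\Le b$; the goal is to deduce $\varphi(a)\Le\varphi(b)$, which is precisely the assertion that $\varphi$ is increasing (i.e.\ $\kappa$-increasing).

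For $c\in\rbb^\kappa$ I would introduce the $\kappa$-tuple $\A_c:=(c_1 I,\ldots,c_\kappa I)$. Each $c_j I$ is a bounded selfadjoint operator whose spectral measure is $E_{c_j I}(\sigma)=\chi_\sigma(c_j)I$, a scalar multiple of $I$; hence these spectral measures commute and $\A_c\in\shk$ (in fact $\A_c\in\B_{cs}(\hh,\kappa)$). Because $\dim\hh\Ge 1$ we have $I\neq 0$, which is what keeps the order nontrivial. From \eqref{miaraEa} together with uniqueness of the joint spectral measure one reads off that $E_{\A_c}$ is the point mass $E_{\A_c}(\sigma)=\chi_\sigma(c)I$ for $\sigma\in\borel(\rbb^\kappa)$, and consequently $F_{\A_c}(x)=\chi_{\{c\Le x\}}I$ for $x\in\rbb^\kappa$.

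Next I would verify that $a\Le b$ forces $\A_a\Lec\A_b$. Indeed, if $b\Le x$ then $a\Le b\Le x$, so $\{y\colon b\Le y\}\subseteq\{y\colon a\Le y\}$, whence $F_{\A_b}(x)=\chi_{\{b\Le x\}}I\Le\chi_{\{a\Le x\}}I=F_{\A_a}(x)$ for every $x\in\rbb^\kappa$; by definition this is exactly $\A_a\Lec\A_b$. Applying hypothesis \eqref{niez} yields $\varphi(\A_a)\Lec\varphi(\A_b)$. On the other hand, integrating $\varphi$ against the point mass $E_{\A_c}$ gives $\varphi(\A_c)=\varphi(c)I$, so the last relation becomes $\varphi(a)I\Lec\varphi(b)I$ in $\mathcal{S}_c(\hh,1)$.

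Finally I would unwind the one-variable spectral order for these scalar operators. Since $E_{\varphi(c)I}((-\infty,x])=\chi_{\{\varphi(c)\Le x\}}I$ for $x\in\rbb$, the relation $\varphi(a)I\Lec\varphi(b)I$ means $\chi_{\{\varphi(b)\Le x\}}I\Le\chi_{\{\varphi(a)\Le x\}}I$ for all $x\in\rbb$, i.e.\ $\varphi(b)\Le x\Rightarrow\varphi(a)\Le x$, which holds precisely when $\varphi(a)\Le\varphi(b)$. This is the desired conclusion. None of the steps is genuinely deep; the only points demanding care are confirming that the scalar tuple really lies in $\shk$ with the claimed point-mass joint spectral measure, and keeping the directions of the two inequalities consistent throughout, since passing to $\Lec$ reverses the pointwise inequality between the distribution functions $F_{\A}$.
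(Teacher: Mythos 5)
Your proposal is correct and follows essentially the same route as the paper: test the hypothesis on the scalar tuples $(c_1I,\ldots,c_\kappa I)$, whose joint spectral measure is the Dirac point mass $\delta_c\cdot I$, and unwind the resulting relation $\varphi(a)I\Lec\varphi(b)I$ using $\dim\hh\Ge 1$. The paper's proof is just a more compressed version of the same argument.
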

\begin{proof}
Fix $a=(a_1,\ldots,a_{\kappa})\in\rbb^{\kappa}$ and
$b=(b_1,\ldots,b_{\kappa})\in\rbb^{\kappa}$ such that
$a\Le b$. Define $A_i=a_iI$ and $B_i=b_iI$ for
$i=1,\ldots,\kappa$. It is easily seen that
$E_{\A}(\sigma)=\delta_a(\sigma)I$ and
$E_{\B}(\sigma)=\delta_b(\sigma)I$ for every
$\sigma\in\borel(\rbb^\kappa)$, where $\delta_c$ is a
Dirac measure defined on $\borel(\rbb^\kappa)$ at the
point $c\in\rbb^\kappa$. Hence $\A\Lec\B$ because
$a\Le b$. Then, by the condition \eqref{niez},
$\varphi(a)I=\varphi(\A)\Lec \varphi(\B)=\varphi(b)I$. Thus $\varphi(a)\Le \varphi(b)$,
since $\dim \hh\geqslant 1$. This completes the proof.
\end{proof}
In fact, as we will see, every increasing Borel function $\varphi\colon
\rbb^{\kappa}\rightarrow \rbb$ preserves multidimensional spectral order. In order to prove this, we will need a few lemmata.
\begin{lemma}\label{lemmiara}
  Let $\mathcal{M}$ be a $\sigma$-algebra on a set
$X$ and let $E$ be a spectral measure on
$\mathcal{M}$. If
$\{V_n\}_{n=1}^{\infty}\subseteq\mathcal{M}$, then
\begin{equation}\label{miara}
 E(\bigcup_{n=1}^{\infty}V_n)=\bigvee_{n=1}^{\infty}E(V_n).
\end{equation}
\end{lemma}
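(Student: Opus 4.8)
The plan is to establish the two inequalities $\bigvee_{n=1}^\infty E(V_n)\Le E\big(\bigcup_{n=1}^\infty V_n\big)$ and $E\big(\bigcup_{n=1}^\infty V_n\big)\Le\bigvee_{n=1}^\infty E(V_n)$ separately, and then conclude by antisymmetry of $\Le$ on the projection lattice $\rzut\hh$. Throughout I write $W=\bigcup_{n=1}^\infty V_n$, and I use that each $E(V_n)$ and $E(W)$ is an orthogonal projection, so that by \eqref{sup1} the supremum $\bigvee_{n=1}^\infty E(V_n)$ is exactly the projection onto $\bigvee_{n=1}^\infty\ob{E(V_n)}$, the smallest closed subspace containing all the ranges.

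For the inequality $\bigvee_{n=1}^\infty E(V_n)\Le E(W)$, first I would show $E(V_n)\Le E(W)$ for each $n$. Since $V_n\subseteq W$, the multiplicativity of the spectral measure gives $E(W)E(V_n)=E(W\cap V_n)=E(V_n)$; hence every vector in $\ob{E(V_n)}$ is fixed by $E(W)$, so $\ob{E(V_n)}\subseteq\ob{E(W)}$ and therefore $E(V_n)\Le E(W)$. As $E(W)$ is then an upper bound of $\{E(V_n)\}_{n=1}^\infty$, the least upper bound satisfies $\bigvee_{n=1}^\infty E(V_n)\Le E(W)$.

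The hard part will be the reverse inequality $E(W)\Le\bigvee_{n=1}^\infty E(V_n)$, for which I would argue on ranges. By \eqref{sup1} it suffices to prove $\ob{E(W)}\subseteq\bigvee_{n=1}^\infty\ob{E(V_n)}$, equivalently that any $h\in\hh$ orthogonal to every $\ob{E(V_n)}$ is orthogonal to $\ob{E(W)}$. Such an $h$ satisfies $E(V_n)h=0$ for all $n$. The key device is the finite scalar measure $\mu_h(\sigma):=\is{E(\sigma)h}{h}=\Vert E(\sigma)h\Vert^2$ on $\mathcal{M}$: countable subadditivity yields $\mu_h(W)\Le\sum_{n=1}^\infty\mu_h(V_n)=0$, so $\Vert E(W)h\Vert^2=\mu_h(W)=0$ and $h\perp\ob{E(W)}$. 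Combining the two inequalities gives \eqref{miara}.

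The only genuine subtlety is the passage from ``$h$ annihilates each $E(V_n)$'' to ``$h$ annihilates $E(W)$''; everything else is bookkeeping with \eqref{sup1} and the multiplicativity and countable additivity of $E$. An alternative route would first treat finite unions via $E(V_1\cup V_2)=E(V_1)+E(V_2)-E(V_1)E(V_2)=E(V_1)\vee E(V_2)$ for the commuting projections $E(V_n)$, then take $W_N=\bigcup_{n=1}^N V_n\uparrow W$ and use that an increasing sequence of projections converges in the strong operator topology to its supremum; but the scalar-measure argument is shorter and avoids invoking the monotone-convergence lemma for projections.
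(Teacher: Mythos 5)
Your proof is correct, and it follows the same overall strategy as the paper's: the easy inequality via monotonicity of $E$, and the reverse inequality reduced via \eqref{sup1} to an inclusion of closed subspaces. The one step where you genuinely diverge is the mechanism for that inclusion. The paper argues directly on the range: it disjointifies the $V_n$ into $W_n:=V_n\setminus(V_1\cup\dots\cup V_{n-1})$ and writes any $h\in\ob{E(W)}$ as the norm-convergent sum $h=\sum_{n}E(W_n)h$, each summand lying in $\ob{E(V_n)}$, so $h\in\bigvee_n\ob{E(V_n)}$; this uses the countable additivity of the vector measure $\sigma\mapsto E(\sigma)h$. You instead pass to orthocomplements: a vector $h$ with $E(V_n)h=0$ for all $n$ satisfies $\|E(W)h\|^2=\mu_h(W)\Le\sum_n\mu_h(V_n)=0$ by countable subadditivity of the scalar measure $\mu_h=\is{E(\cdot)h}{h}$, whence $\ob{E(W)}\subseteq\bigl(\bigcap_n\ob{E(V_n)}^\perp\bigr)^\perp=\bigvee_n\ob{E(V_n)}$. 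Both arguments are complete; yours avoids the disjointification and only needs the scalar measures $\mu_h$, while the paper's gives the slightly stronger explicit decomposition of each vector of $\ob{E(W)}$ as a sum of vectors from the individual ranges. Your closing remark about the alternative via finite unions and SOT-limits of increasing projections is also sound, but, as you say, unnecessary.
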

\begin{proof} Set $V_\infty:=\bigcup_{n=1}^{\infty}V_n$ and $M_k:=E(V_k)\hh$, for $k\in\nbb\cup \{\infty\}$. It suffices to show that $P_{M_\infty}=E(V_\infty)\Le \bigvee_{n=1}^{\infty}E(V_n)$,
 since $E(V_n)\Le E(V_\infty)$ for every $n\in\nbb$. By equation \eqref{sup1}, we have $\bigvee_{n=1}^{\infty}E(V_n)=\bigvee_{n=1}^{\infty}P_{M_n}=P_{\bigvee_{n=1}^{\infty}M_n}$.
 Thus we need to show that
 $M_\infty\subseteq\bigvee_{n=1}^{\infty}M_n$. Take
 $h\in M_\infty$ and let
\begin{equation*}
W_n:=\begin{cases}
V_1,& \textmd{for}\: n=1,\\
V_n\backslash(V_1\cup\ldots V_{n-1}),& \textmd{for}\: n>1.
\end{cases}
\end{equation*}
 It is obvious that
$V_\infty=\bigcup_{n=1}^{\infty}W_n$ and $ W_n\cap
W_m=\varnothing$ if $m\neq n$. Hence
\begin{align*} h=E(\bigcup_{n=1}^{\infty}W_n)h=\sum_{n=1}^{\infty}E(W_n)h\in\bigvee_{n=1}^{\infty}M_n.
\end{align*}
 This completes the proof.
\end{proof}
 \begin{lemma}\label{regular2}
 Let $E$ be a spectral measure on
$\borel(\rbb^{\kappa})$,
$\varOmega\in\borel(\rbb^{\kappa})$ and
$\iota\in\{1,\ldots,\kappa\}$. If $\varOmega\in
L(\rbb^\kappa,\Le_\iota)$, then
\begin{equation}\label{supremum}
E(\varOmega)=\bigvee\{ E(D)\colon
D\subseteq\varOmega,
D=\overline{D}\in L(\rbb^\kappa,\Le_\iota)\}.
\end{equation}
\end{lemma}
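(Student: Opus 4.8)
The plan is to mirror the proof of Lemma \ref{regcwiar}, replacing the inner regularity of the scalar measure $\mu$ (which there came from \cite[Theorem 2.18.]{Rudin}) by the regularity of the spectral measure $E$ established in Proposition \ref{regular1}, and replacing suprema of reals by suprema in the complete lattice $\rzut{\hh}$.

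First I would record the easy inequality. Writing $Q:=\bigvee\{E(D)\colon D\subseteq\varOmega,\,D=\overline{D}\in L(\rbb^\kappa,\Le_\iota)\}$, each admissible $D$ satisfies $D\subseteq\varOmega$, whence $E(D)\Le E(\varOmega)$ by monotonicity of the spectral measure (indeed $E(\varOmega)=E(D)+E(\varOmega\setminus D)$ is a sum of orthogonal projections). Taking the supremum over all such $D$ gives $Q\Le E(\varOmega)$.

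For the reverse inequality $E(\varOmega)\Le Q$, I would exploit that $\rbb^\kappa$ is a separable complete metric space, so $E$ is regular by Proposition \ref{regular1}; hence $E(\varOmega)=\bigvee\{E(\tau)\colon \tau\subseteq\varOmega,\ \tau\text{ compact}\}$. Fix a compact $\tau\subseteq\varOmega$ and set $D_\tau:=\downarrow_\iota\tau$. Then $D_\tau$ is a lower set by construction, it is closed by Proposition \ref{cc}, and since $\varOmega$ is itself a lower set (so $\downarrow_\iota\varOmega=\varOmega$), Proposition \ref{cwiartka}(ii) yields $D_\tau\subseteq\downarrow_\iota\varOmega=\varOmega$; moreover $\tau\subseteq D_\tau$. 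Thus $D_\tau$ is admissible in the supremum defining $Q$, and $E(\tau)\Le E(D_\tau)\Le Q$. Taking the supremum over all compact $\tau\subseteq\varOmega$ gives $E(\varOmega)\Le Q$, and combined with the first step this proves \eqref{supremum}.

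The only point requiring care is that the whole chain of inequalities lives in the projection lattice rather than among reals: I must use that $\Le$ on $\rzut{\hh}$ is the lattice order for which $\bigvee$ is the least upper bound, that $E$ is monotone with respect to set inclusion, and the fact (from Proposition \ref{regular1}) that the supremum of $E(\tau)$ over compact $\tau\subseteq\varOmega$ is exactly $E(\varOmega)$; everything else is a purely order-theoretic manipulation of suprema, so the genuine content is concentrated in the regularity of $E$ and in Propositions \ref{cwiartka} and \ref{cc}. Alternatively, one could reduce directly to Lemma \ref{regcwiar} via the finite Borel measures $\mu_h:=\is{E(\cdot)h}{h}$: if $h\perp E(D)\hh$ for every admissible $D$, then $\mu_h(D)=\|E(D)h\|^2=0$ for all such $D$, whence $\mu_h(\varOmega)=0$ by Lemma \ref{regcwiar}, i.e. $E(\varOmega)h=0$; this shows $(Q\hh)^\perp\subseteq(E(\varOmega)\hh)^\perp$ and hence $E(\varOmega)\Le Q$, as before.
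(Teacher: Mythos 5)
Your proposal is correct and follows essentially the same route as the paper: both establish $Q\Le E(\varOmega)$ by monotonicity and then use the regularity of $E$ from Proposition \ref{regular1} together with Propositions \ref{cc} and \ref{cwiartka}(ii) to dominate each $E(\tau)$, $\tau$ compact, by $E(\downarrow_\iota\tau)\Le Q$. The alternative reduction to Lemma \ref{regcwiar} via the scalar measures $\mu_h$ that you sketch at the end is a valid variant, but it is not needed.
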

\begin{proof}
Let $E_0(\varOmega):=\bigvee\{ E(D)\colon
D\subseteq\varOmega,
D=\overline{D}\in L(\rbb^\kappa,\Le_\iota)\}$. It
is obvious that $E_0(\varOmega)\Le E(\varOmega)$. For
the opposite inequality, take any compact set
$K\subseteq\varOmega$. Applying Proposition \ref{cc}  and Proposition
\ref{cwiartka} (ii) we get
that $\overline{\downarrow_\iota K}=\downarrow_\iota K\subseteq \downarrow_\iota
\varOmega=\varOmega$ and
\begin{align*}
E(K)\Le E(\downarrow_\iota K)\Le E_0(\varOmega).
\end{align*}
This and corollary \ref{regular1} imply that
\begin{align*}
E(\varOmega)=\bigvee\{
E(K)\colon K\subseteq\varOmega,
K \textmd{ is compact}\} \Le E_0(\varOmega),
\end{align*}
which completes the proof of equality \eqref{supremum}.
\end{proof}
 We need some additional notation.
Let $\iota\in\nbb$ such that $\iota<\kappa$. If  $\mathbf{C}=(C_1,\ldots, C_{\kappa})\in\shk$, then we define $\mathbf{C}':=(C_1,\ldots, C_{\iota})\in\mathcal{S}_c(\hh,\iota)$ and $\mathbf{C}'':=(C_{\iota+1},\ldots, C_{\kappa})\in\mathcal{S}_c(\hh,\kappa-\iota)$. Applying spectral theorem for finitely many commuting selfadjoint operators and measure transport theorem  \cite[Theorem 5.4.10.]{Bir-Sol}, we can deduce that $E_{\mathbf{C}'}$ and $E_{\mathbf{C}''}$ commute and
\begin{equation}\label{iloczyn_C}
E_\mathbf{C}(\varOmega'\times\varOmega'')=E_{\mathbf{C}'}(\varOmega')E_{\mathbf{C}''}(\varOmega''),\, \quad \varOmega'\in\borel(\rbb^\iota),\, \varOmega''\in\borel(\rbb^{\kappa-\iota}).
\end{equation}
\begin{lemma}\label{lematpodst}
 Let $\A=(A_1,\ldots, A_{\kappa}),\B=(B_1,\ldots, B_{\kappa})\in\shk$ and let $\iota\in\{1,\ldots,\kappa\}$. Assume that $A_j=B_j$ for every $j=\iota+1,\ldots,\kappa$, whenever $\iota<\kappa$. If $\A\Lec\B$, then
\begin{equation}\label{nier}
E_{\B}(\varOmega)\Le E_{\A}(\varOmega)
\end{equation}
for every $\varOmega\in\borel(\rbb^\kappa)\cap L(\rbb^\kappa,\Le_\iota)$.
\end{lemma}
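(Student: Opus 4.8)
The plan is to avoid reducing to scalar measures: the Remark following Theorem~\ref{3.1} shows that the box inequalities $F_\B(x)\Le F_\A(x)$ alone do not force the lower-set inequality, so the hypothesis $A_j=B_j$ ($j>\iota$) must be exploited directly at the level of projections. Write $\A',\A'',\B',\B''$ as in \eqref{iloczyn_C}; since $A_j=B_j$ for $j>\iota$ we have $\A''=\B''$, hence $E'':=E_{\A''}=E_{\B''}$, and \eqref{iloczyn_C} gives $E_\A(\varOmega'\times\varOmega'')=E_{\A'}(\varOmega')E''(\varOmega'')$ and $E_\B(\varOmega'\times\varOmega'')=E_{\B'}(\varOmega')E''(\varOmega'')$, with $E''(\cdot)$ commuting with every $E_{\A'}(\cdot)$ and every $E_{\B'}(\cdot)$ (when $\iota=\kappa$ the factor $E''$ is absent). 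Set
$$\mathcal{D}:=\{\varOmega\in\borel(\rbb^\kappa)\cap L(\rbb^\kappa,\Le_\iota)\colon E_{\B}(\varOmega)\Le E_{\A}(\varOmega)\}.$$
By Lemma~\ref{regular2}, $E_\B(\varOmega)=\bigvee\{E_\B(D)\colon D\subseteq\varOmega,\ D=\overline D\in L(\rbb^\kappa,\Le_\iota)\}$, and $E_\A(D)\Le E_\A(\varOmega)$ whenever $D\subseteq\varOmega$; hence $E_\A(\varOmega)$ is an upper bound of the family $\{E_\B(D)\}$, and it suffices to prove that every closed member of $L(\rbb^\kappa,\Le_\iota)$ lies in $\mathcal{D}$.

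First I would record that $\mathcal{D}$ is stable under countable unions and intersections taken within $L(\rbb^\kappa,\Le_\iota)$. Indeed, by Lemma~\ref{lemmiara} and \eqref{sup1} (the latter also yielding the analogous formula for intersections), $E_\A$ and $E_\B$ carry countable unions to $\bigvee$ and countable intersections to $\bigwedge$ of the corresponding projections, and both $\bigvee$ and $\bigwedge$ are monotone for $\Le$ on $\rzut{\hh}$, being determined by inclusions of ranges. Next I would produce generators: for $a'\in\rbb^\iota$ and $\tau''\in\borel(\rbb^{\kappa-\iota})$, taking $x=(a',x'')$ in the hypothesis and letting $x''\to(\infty,\ldots,\infty)$, so that $E''((-\infty,x''])\to I$ strongly, yields $E_{\B'}((-\infty,a'])\Le E_{\A'}((-\infty,a'])$. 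These projections then commute, as does $E''(\tau'')$ with each; multiplying by the commuting projection $E''(\tau'')$ gives $(-\infty,a']\times\tau''\in\mathcal{D}$. Likewise $\rbb^\iota\times\delta''\in\mathcal{D}$ for every $\delta''\in\borel(\rbb^{\kappa-\iota})$, because there $E_\A$ and $E_\B$ both reduce to $E''(\delta'')$.

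The generation step comes next. Let $D$ be closed in $L(\rbb^\kappa,\Le_\iota)$; then $U:=\rbb^\kappa\setminus D$ is open and $\iota$-upper. Putting $\tau''_{a'}:=\{x''\colon(a',x'')\in U\}$, the openness of $U$ gives the countable covering $U=\bigcup_{a'\in\mathbb{Q}^\iota}[a',\infty)\times\tau''_{a'}$, whence $D=\bigcap_{a'\in\mathbb{Q}^\iota}\bigl([a',\infty)\times\tau''_{a'}\bigr)^c$. Each factor splits as $\bigl(\{x'\not\Ge a'\}\times\rbb^{\kappa-\iota}\bigr)\cup\bigl(\rbb^\iota\times(\rbb^{\kappa-\iota}\setminus\tau''_{a'})\bigr)$; the first summand is a countable union of cylinders $(-\infty,c']\times\rbb^{\kappa-\iota}$ (each half-space $\{x'_j<a'_j\}$ being a countable union of sets $(-\infty,c']$ with $c'\in\mathbb{Q}^\iota$), and the second is of the form $\rbb^\iota\times\delta''$. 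Both types belong to $\mathcal{D}$ by the preceding paragraph, so the stability properties place $\bigl([a',\infty)\times\tau''_{a'}\bigr)^c$, and then $D$, in $\mathcal{D}$.

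I expect the generation step to be the main obstacle. The representation of a closed $\iota$-lower set as a countable combination of the elementary cylinders rests on the openness of the complement $U$, which is what makes the covering by the sets $[a',\infty)\times\tau''_{a'}$ countable; for a general Borel lower set one is forced to the uncountable family of slices $(-\infty,x']\times\tau''_{x'}$, to which the projection-lattice closure of $\mathcal{D}$ does not apply. This is precisely the role of the reduction to closed sets furnished by Lemma~\ref{regular2}, and of the common factor $E''$, which alone allows the box inequality to be upgraded to the generators of $\mathcal{D}$.
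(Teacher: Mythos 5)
Your proof is correct, but the core approximation step takes a genuinely different route from the paper's. Both arguments share the same skeleton: the product structure \eqref{iloczyn_C} with the common factor $E_{\A''}=E_{\B''}$ yields the inequality on cylinders $(-\infty,a']\times\tau''$ (you obtain $E_{\B'}((-\infty,a'])\Le E_{\A'}((-\infty,a'])$ by an SOT limit $x''\to\infty$, the paper by a supremum over rational $q''$ via Lemma \ref{lemmiara}; these are equivalent), and both finish by reducing a general Borel lower set to closed ones through Lemma \ref{regular2}. Where you diverge is in handling a closed $\iota$-lower set $D$: the paper sandwiches $D$ between a countable union of cylinders and the $\varepsilon$-neighbourhood $D^{(2\varepsilon)}$ built from the distance function (Proposition \ref{oepsilon}), then lets $\varepsilon\to 0$ using SOT-continuity of the spectral measure along the decreasing sets $D^{(2/n)}$; you instead give an exact countable Boolean decomposition $D=\bigcap_{a'\in\mathbb{Q}^\iota}\bigl([a',\infty)\times\tau''_{a'}\bigr)^c$ coming from the openness of the complement, and invoke stability of your class $\mathcal{D}$ under countable intersections as well as unions. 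Your route dispenses with the metric machinery entirely, at the price of the infimum formula $E\bigl(\bigcap_n V_n\bigr)=\bigwedge_n E(V_n)$, which the paper never states; it does hold (apply Lemma \ref{lemmiara} to complements and use \eqref{sup1}), and $\bigwedge$ is monotone because it is computed on ranges, so your closure claim is sound --- but this half-line is the one ingredient not already on the page and should be written out. The remaining pieces (the covering of the open upper set $U$ by the sets $[a',\infty)\times\tau''_{a'}$ with $a'$ rational, the splitting of each complement into $\{x'\colon x'\not\Ge a'\}\times\rbb^{\kappa-\iota}$ and $\rbb^\iota\times\delta''$, and the fact that multiplying an inequality of projections by a commuting projection preserves it) all check out.
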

\begin{proof} The proof of inequality \eqref{nier} will be divided into several steps. We are going to deal only with the case $\iota<\kappa$, since the proof in the case $\iota=\kappa$ is similar.

{\em Step} 1. Inequality
\eqref{nier} holds for $\varOmega=(-\infty,x']\times \varOmega''$,
where $x'\in\rbb^\iota$ and
$\varOmega''\in\borel(\rbb^{\kappa-\iota})$.

Equation \eqref{iloczyn_C}, Lemma \ref{lemmiara}, and inequality $\A\Lec\B$ imply that
\begin{align*}E_{\B'}((-\infty,x'])&=E_{\B}((-\infty,x']\times \rbb^{\kappa-\iota})\\
&=\bigvee_{q''\in\mathbb{Q}^{\kappa-\iota}}E_{\B}((-\infty,x']\times (-\infty,q''])\\
&\Le\bigvee_{q''\in\mathbb{Q}^{\kappa-\iota}}E_{\A}((-\infty,x']\times (-\infty,q''])=E_{\A'}((-\infty,x']).
\end{align*}
By assumption, $E_{\B''}=E_{\A''}$. Therefore,
\begin{align*}
E_{\B}((-\infty,x']\times \varOmega'')&=E_{\B'}((-\infty,x'])E_{\B''}(\varOmega'')=E_{\B'}((-\infty,x'])E_{\A''}(\varOmega'')\\
&\Le
E_{\A'}((-\infty,x'])E_{\A''}(\varOmega'')=E_{\A}((-\infty,x']\times \varOmega'').
\end{align*}

{\em Step} 2. Inequality \eqref{nier}
holds, if
$\varOmega=\bigcup_{n=1}^{\infty}(-\infty,x_n]\times
\varOmega''_n$, where $x_n\in\rbb^\iota$ and
$\varOmega''_n\in\borel(\rbb^{\kappa-\iota})$ for all
$n\in\nbb$.

Indeed, by Lemma \ref{lemmiara} and Step 1., we get that
\begin{align*}
 E_{\B}(\bigcup_{n=1}^{\infty}(-\infty,x_n]\times \varOmega''_n)&=\bigvee_{n=1}^{\infty}E_{\B}((-\infty,x_n]\times \varOmega''_n)\\
&\Le\bigvee_{n=1}^{\infty}E_{\A}((-\infty,x_n]\times \varOmega''_n)=E_{\A}(\bigcup_{n=1}^{\infty}(-\infty,x_n]\times \varOmega''_n).
\end{align*}

{\em Step} 3. Assume that $\varOmega$ is a lower set in
$(\rbb^\kappa,\Le_\iota)$. Let $\Vert \cdot\Vert=\Vert \cdot\Vert_m$, $m\in\nbb$, be a  norm on $\rbb^m$ given by $\Vert x\Vert_m=\sum_{j=1}^m\vert x_j\vert$ for every $x\in\rbb^m$.
    We are going to show that
  \begin{equation}\label{inkluzja}
  \varOmega\subseteq \bigcup\{(-\infty,q']\times
\mathbb{B}_{\kappa-\iota}\left(q'',\varepsilon\right)\colon
(q',q'')\in (\mathbb{Q}^{\iota}\times\mathbb{Q}^{\kappa-\iota})\cap\varOmega^{(\varepsilon)}\}\subseteq\varOmega^{(2\varepsilon)},
  \end{equation}
  for every $\varepsilon>0$, where $\mathbb{B}_{m}\left(x,r\right)$ denote open ball $\{y\in\rbb^m\colon \Vert y-x\Vert_m<r\}$ for $x\in\rbb^m$ and $r>0$.

   Let $x\in\varOmega$. By the density of $\mathbb{Q}^{\kappa}$, we find
$q\in\mathbb{B}_\kappa\left(x,\varepsilon\right)\cap\mathbb{Q}^{\kappa}$ such that $x<q$.
Hence $q\in\varOmega^{(\varepsilon)}$ and
$x\in(-\infty,q']\times
\mathbb{B}_{\kappa-\iota}\left(q'',\varepsilon\right)$. This proves the first
inclusion.

 To prove the second inclusion fix $q=(q',q'')\in (\mathbb{Q}^{\iota}\times\mathbb{Q}^{\kappa-\iota})\cap\varOmega^{(\varepsilon)}$. Then \begin{align*}
 \{q'\}\times\mathbb{B}_{\kappa-\iota}\left(q'',\varepsilon\right)\subseteq \mathbb{B}_{\kappa}\left(q,\varepsilon\right)\subseteq \varOmega^{(2\varepsilon)},
\end{align*}
since $d_{\varOmega}(q)\Le\varepsilon$. Hence, by Proposition \ref{cwiartka} and Proposition \ref{oepsilon},
 \begin{align*}
  (-\infty,q']\times
\mathbb{B}_{\kappa-\iota}\left(q'',\varepsilon\right)=\downarrow_\iota\big( \{q'\}\times\mathbb{B}_{\kappa-\iota}\left(q'',\varepsilon\right)\big)\subseteq \downarrow_\iota\varOmega^{(2\varepsilon)}= \varOmega^{(2\varepsilon)}.
 \end{align*}
This gives the second inclusion.

   {\em Step} 4. Inequality
\eqref{nier} holds whenever $\varOmega$ is a closed lower set in $(\rbb^\kappa,\Le_\iota)$.

Let $Q_{q,n}:=(-\infty,q']\times \mathbb{B}_{\kappa-\iota}\left(q'',\frac{1}{n}\right)$, where $q=(q',q'')\in (\mathbb{Q}^{\iota}\times\mathbb{Q}^{\kappa-\iota})\cap\varOmega^{(\frac{1}{n})}$ and $n\in\nbb$. By \eqref{inkluzja} and Step 2., we obtain that
   \begin{align*}
   E_{\bf{B}}(\varOmega)\Le E_{\bf{B}}(\bigcup
Q_{q,n})\Le E_{\bf{A}}(\bigcup Q_{q,n})\Le
E_{\bf{A}}(\varOmega^{(\frac{2}{n})}).
   \end{align*}
   Note that
$\{\varOmega^{(\frac{1}{n})}\}_{n=1}^{\infty}$ is a
decreasing family of sets (with respect to set
inclusion). What is more,
$\varOmega=\bigcap_{\epsilon\in(0,\infty)}\varOmega^{(\epsilon)}$,
since $\varOmega$ is closed.
    Hence
   \begin{align*}
   E_{\bf{B}}(\varOmega)\Le \textsc{sot-}\lim_{n\to\infty}
E_{\bf{A}}(\varOmega^{(\frac{2}{n})})=E_{\bf{A}}(\varOmega).
   \end{align*}

 {\em Step} 5. Application of Step 4. and  Lemma \ref{regular2} completes the proof of
the inequality \eqref{nier} for an arbitrary Borel lower set
$\varOmega$ in $(\rbb^\kappa,\Le_\iota)$.
 \end{proof}
\begin{thm}\label{nowe}
Let $\A=(A_1,\ldots, A_{\kappa}),\B=(B_1,\ldots,
B_{\kappa})\in\shk$, and $\varphi\colon\rbb^\kappa\to\rbb$ be a Borel function. Assume that $\bf{A}\Lec \bf{B}$. If
 \begin{enumerate}
 \item[{\em (a)}] the function $\varphi$ is increasing,
  \end{enumerate}
  or there exist $\iota\in\nbb$ such that $\iota<\kappa$, and $\varOmega\in\borel(\rbb^{\kappa-\iota})$ satisfying the following conditions
   \begin{enumerate}
 \item[{\em (a')}] $A_j=B_j$ for every $j=\iota+1,\ldots,\kappa$,
 \item[{\em (b')}] the function $\varphi$
 is $\iota$-increasing on $\rbb^\iota\times\varOmega$,
\item[{\em (c')}] $E_{\A''}(\rbb^{\kappa-\iota}\setminus\varOmega)=E_{\B''}(\rbb^{\kappa-\iota}\setminus\varOmega)=0$,
 \end{enumerate}
  then $\varphi(\bf{A})\Lec\varphi(\bf{B})$.
\end{thm}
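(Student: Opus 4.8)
The plan is to reduce the multidimensional statement to the one-dimensional spectral-order inequality for $\varphi(\A),\varphi(\B)\in\mathcal{S}_c(\hh,1)$, and then to feed a suitable lower set into Lemma \ref{lematpodst}. By the definition of the (one-variable) spectral order, $\varphi(\A)\Lec\varphi(\B)$ means $F_{\varphi(\B)}(x)\Le F_{\varphi(\A)}(x)$ for every $x\in\rbb$. The identity I would use is the pushforward formula $E_{\varphi(\A)}(\sigma)=E_\A(\varphi^{-1}(\sigma))$ for $\sigma\in\borel(\rbb)$ (and likewise for $\B$), which is the spectral measure of $\varphi(\A)=\int_{\rbb^\kappa}\varphi\,\D E_\A$ and follows from the Stone--von Neumann calculus and the measure transport theorem already invoked in \eqref{iloczyn_C}. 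Taking $\sigma=(-\infty,x]$, the goal becomes
$$E_\B(\varphi^{-1}((-\infty,x]))\Le E_\A(\varphi^{-1}((-\infty,x])), \qquad x\in\rbb.$$

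In case (a), since $\varphi$ is increasing (that is, $\kappa$-increasing) and Borel, Proposition \ref{oddzielros} shows that $S_x:=\varphi^{-1}((-\infty,x])$ is a Borel lower set in $(\rbb^\kappa,\Le_\kappa)=(\rbb^\kappa,\Le)$. I would then apply Lemma \ref{lematpodst} with $\iota=\kappa$ (for which the hypothesis ``$A_j=B_j$ for $j>\iota$'' is vacuous), obtaining $E_\B(S_x)\Le E_\A(S_x)$, which is precisely the required inequality.

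For the case governed by (a')--(c'), the difficulty is that $\varphi$ is only $\iota$-increasing on the strip $\rbb^\iota\times\varOmega$, so $S_x$ need not be a $\Le_\iota$-lower set on all of $\rbb^\kappa$. My remedy is to truncate to the strip: put $\tilde S_x:=S_x\cap(\rbb^\iota\times\varOmega)$. I would first check that $\tilde S_x$ is a Borel lower set in $(\rbb^\kappa,\Le_\iota)$: if $(z',y'')\Le_\iota(y',y'')$ with $(y',y'')\in\tilde S_x$, then $y''\in\varOmega$, and since both points lie in $\rbb^\iota\times\varOmega$ where $\varphi$ is $\iota$-increasing by (b'), we get $\varphi(z',y'')\Le\varphi(y',y'')\Le x$, whence $(z',y'')\in\tilde S_x$. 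Next I would use (c') together with the product formula \eqref{iloczyn_C} to see $E_\A(\rbb^\iota\times(\rbb^{\kappa-\iota}\setminus\varOmega))=E_{\A'}(\rbb^\iota)E_{\A''}(\rbb^{\kappa-\iota}\setminus\varOmega)=0$, and similarly for $\B$; by finite additivity of the spectral measure this yields $E_\A(S_x)=E_\A(\tilde S_x)$ and $E_\B(S_x)=E_\B(\tilde S_x)$. Finally, since $\tilde S_x\in\borel(\rbb^\kappa)\cap L(\rbb^\kappa,\Le_\iota)$ and (a') supplies the hypothesis of Lemma \ref{lematpodst} for this $\iota$, the lemma gives $E_\B(\tilde S_x)\Le E_\A(\tilde S_x)$. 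Chaining the two equalities with this inequality produces $E_\B(S_x)\Le E_\A(S_x)$ for every $x\in\rbb$, completing the proof.

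The main obstacle is the second case, and specifically the verification that the truncated set $\tilde S_x$ is simultaneously a $\Le_\iota$-lower set and spectrally indistinguishable from $S_x$. The restriction to $\rbb^\iota\times\varOmega$ is exactly what repairs the failure of $\iota$-monotonicity off the strip, delivering the lower-set property through (b'), while condition (c') guarantees via \eqref{iloczyn_C} that this restriction costs nothing at the level of the spectral measures $E_\A$ and $E_\B$. Once these two facts are secured, Lemma \ref{lematpodst} finishes the argument; case (a) is then simply the degenerate instance $\iota=\kappa$, in which no truncation is needed.
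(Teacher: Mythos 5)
Your proof is correct and follows essentially the same route as the paper: both reduce to the one-variable inequality via the pushforward $E_{\varphi(\mathbf{C})}(\sigma)=E_{\mathbf{C}}(\varphi^{-1}(\sigma))$ and then apply Lemma \ref{lematpodst} to a Borel $\Le_\iota$-lower set that agrees with $\varphi^{-1}((-\infty,x])$ up to the $E_{\A}$- and $E_{\B}$-null strip $\rbb^\iota\times(\rbb^{\kappa-\iota}\setminus\varOmega)$. The only (cosmetic) difference is that you intersect with $\rbb^\iota\times\varOmega$ where the paper instead takes the union with the complementary strip; both modifications produce a lower set and leave the spectral measures unchanged by (c').
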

\begin{proof} We are going to prove only the second case (i.e. $\iota<\kappa$), since similar
line of reasoning applies in the first case. First let us observe, that the set
\begin{equation*}
\varphi^{-1}((-\infty,x])\cup\left(\rbb^\iota\times(\rbb^{\kappa-\iota}\setminus\varOmega)\right)
\end{equation*}
is a lower set in $(\rbb^\kappa,\Le_\iota)$ for every
$x\in\rbb$ by the assumption (b').  Using measure
transport theorem,
equality \eqref{iloczyn_C}, and
(c'), we get that
\begin{align*}
E_{\varphi(\mathbf{C})}((-\infty,x])&=E_{\mathbf{C}}(\varphi^{-1}((-\infty,x]))\\
&=E_{\mathbf{C}}\big(\varphi^{-1}((-\infty,x])\cup\left(\rbb^\iota\times(\rbb^{\kappa-\iota}\setminus\varOmega)\right)\big),
\end{align*} where $\mathbf{C}=\A,\B$. Thus, by Lemma \ref{lematpodst},
\begin{align*}
 E_{\varphi(\B)}((-\infty,x])\Le E_{\varphi(\A)}((-\infty,x])
\end{align*}
for every $x\in\rbb$.
 This completes the proof.
\end{proof}
\begin{thm}\label{spekwiel}
 Let $\A=(A_1,\ldots, A_{\kappa}),\B=(B_1,\ldots, B_{\kappa})\in\shk$. Then the following conditions are equivalent:
\begin{enumerate}[{\em(i)}]
    \item $\bf{A}\Lec\bf{B}$,
    \item $\varphi(\bf{A})\Lec\varphi(\bf{B})$ for every increasing Borel function  $\varphi\colon\rbb^{\kappa}\rightarrow\rbb$,
    \item $A_j\Lec B_j$ for every $j=1,\ldots,\kappa$.
\end{enumerate}
\end{thm}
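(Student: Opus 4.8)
The plan is to prove the cycle of implications $(\mathrm{i})\Rightarrow(\mathrm{ii})\Rightarrow(\mathrm{iii})\Rightarrow(\mathrm{i})$. The first two implications are short consequences of material already in place, while the real content lies in closing the loop with $(\mathrm{iii})\Rightarrow(\mathrm{i})$, which I expect to be the main obstacle.

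For $(\mathrm{i})\Rightarrow(\mathrm{ii})$ I would simply invoke Theorem~\ref{nowe}: assuming $\A\Lec\B$, case (a) of that theorem applies to every increasing Borel function $\varphi\colon\rbb^\kappa\to\rbb$ and yields $\varphi(\A)\Lec\varphi(\B)$. For $(\mathrm{ii})\Rightarrow(\mathrm{iii})$ I would feed into (ii) the coordinate projections $\pi_j\colon\rbb^\kappa\ni x\mapsto x_j\in\rbb$ from \eqref{eapij}. Each $\pi_j$ is continuous, hence Borel, and clearly increasing, since $x\Le y$ forces $x_j\Le y_j$. By \eqref{wspolnamiara} we have $\pi_j(\A)=A_j$ and $\pi_j(\B)=B_j$, so (ii) gives exactly $A_j\Lec B_j$ for each $j=1,\ldots,\kappa$.

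The core of the argument is $(\mathrm{iii})\Rightarrow(\mathrm{i})$. Here I fix $x=(x_1,\ldots,x_\kappa)\in\rbb^\kappa$ and must show $F_\B(x)\Le F_\A(x)$. Using the product formula \eqref{miaraEa} together with $(-\infty,x]=(-\infty,x_1]\times\cdots\times(-\infty,x_\kappa]$, I would write
\[
F_\A(x)=\prod_{j=1}^{\kappa}P_j,\qquad F_\B(x)=\prod_{j=1}^{\kappa}Q_j,
\]
where $P_j:=E_{A_j}((-\infty,x_j])$ and $Q_j:=E_{B_j}((-\infty,x_j])$. Since $\A,\B\in\shk$, the families $\{P_j\}_j$ and $\{Q_j\}_j$ each consist of mutually commuting orthogonal projections, so the product of each family is the orthogonal projection onto the intersection of the corresponding ranges; equivalently $\prod_j P_j=\bigwedge_j P_j$ and $\prod_j Q_j=\bigwedge_j Q_j$ by \eqref{sup1}. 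Hypothesis (iii) means precisely that $Q_j\Le P_j$ for every $j$ (evaluating the one-dimensional spectral order at the value $x_j$), and the infimum in the projection lattice is monotone, whence $\bigwedge_j Q_j\Le\bigwedge_j P_j$, that is $F_\B(x)\Le F_\A(x)$. As $x$ was arbitrary this gives $\A\Lec\B$.

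The step that needs the most care is recognizing that the products $F_\A(x)$ and $F_\B(x)$ are genuinely the lattice infima $\bigwedge_j P_j$ and $\bigwedge_j Q_j$: this is where commutativity within each tuple is used, and it is what lets the pointwise inequalities $Q_j\Le P_j$ be combined coordinatewise. Note that the two families $\{P_j\}_j$ and $\{Q_j\}_j$ need not commute with one another, so the argument must stay inside each tuple and only compare the two resulting projections at the very end, via monotonicity of $\bigwedge$.
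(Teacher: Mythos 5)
Your proposal is correct and follows essentially the same route as the paper: (i)$\Rightarrow$(ii) via Theorem~\ref{nowe}, (ii)$\Rightarrow$(iii) via the coordinate projections $\pi_j$, and (iii)$\Rightarrow$(i) via the product formula \eqref{miaraEa} for $F_{\A}(x)$ and $F_{\B}(x)$. The only difference is that you make explicit the justification the paper leaves implicit for the inequality $F_{B_1}(x_1)\cdots F_{B_\kappa}(x_\kappa)\Le F_{A_1}(x_1)\cdots F_{A_\kappa}(x_\kappa)$, namely identifying each product of commuting projections with the lattice infimum $\bigwedge_j P_j$ (the projection onto the intersection of ranges) and using monotonicity of $\bigwedge$ — a worthwhile clarification, since the two families need not commute with each other.
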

\begin{proof}
(i)$\Rightarrow$(ii) It follows directly from Theorem \ref{nowe}.

(ii)$\Rightarrow$(iii) Apply equation \eqref{wspolnamiara} for the function $\pi_j$, which is increasing and Borel for every $j=1,\ldots,\kappa$.

(iii)$\Rightarrow$(i) Fix an arbitrary $x=(x_1,\ldots,x_{\kappa})\in\rbb^\kappa$. By the definition of the multidimensional spectral order for selfadjoint operators and equality \eqref{miaraEa}, we get that
\begin{align*}
F_{\B}(x)&=E_{\B}((-\infty,x])
=E_{B_1}((-\infty,x_1])\ldots E_{B_{\kappa}}((-\infty,x_{\kappa}])\\
&=F_{B_1}(x_1)\ldots F_{B_{\kappa}}(x_{\kappa})
\Le F_{A_1}(x_1)\ldots F_{A_{\kappa}}(x_{\kappa})=F_{\A}(x),
\end{align*}
for every $x_1,\ldots, x_{\kappa}\in\rbb$. This completes the proof.
\end{proof}
The following corollary give us an alternative definition for the multidimensional spectral order (see \cite[Theorem 1.]{fujii-kasahara} for bounded selfadjoint operators and \cite[Theorem 6.5.]{Paneta2012} for unbounded selfadjoint operators) in terms of standard order ''$\Le$'' for bounded selfadjoint operators and the spectral integrals of increasing Borel functions.
\begin{cor}\label{def-eq}
 Let $\A,\B\in\shk$. Then the following conditions are equivalent:
\begin{enumerate}[{\em(i)}]
\item $\bf{A}\Lec\bf{B}$,
\item $\varphi(\bf{A})\Le\varphi(\bf{B})$ for every bounded increasing continuous function  $\varphi\colon\rbb^{\kappa}\rightarrow\rbb$,
\item $\varphi(\bf{A})\Le\varphi(\bf{B})$ for every bounded increasing Borel function $\varphi\colon\rbb^{\kappa}\rightarrow\rbb$.
\end{enumerate}
\end{cor}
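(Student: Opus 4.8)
The plan is to establish the cycle $\mathrm{(i)}\Rightarrow\mathrm{(iii)}\Rightarrow\mathrm{(ii)}\Rightarrow\mathrm{(i)}$; the implication $\mathrm{(iii)}\Rightarrow\mathrm{(ii)}$ is immediate, since every bounded increasing continuous function is in particular a bounded increasing Borel function. For $\mathrm{(i)}\Rightarrow\mathrm{(iii)}$ I would fix a bounded increasing Borel function $\varphi\colon\rbb^\kappa\to\rbb$ and invoke Theorem \ref{nowe}(a), which yields $\varphi(\A)\Lec\varphi(\B)$. Because $\varphi$ is bounded, $\varphi(\A)$ and $\varphi(\B)$ lie in $\ogr\hh$ and are selfadjoint, so it remains to pass from the spectral order to the usual order for bounded operators. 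This step is classical: using that the spectral order is invariant under adding a scalar multiple of the identity (note $(\varphi+c)(\A)=\varphi(\A)+cI$, with $\varphi+c$ again bounded increasing Borel), one reduces to the positive case and applies \cite[Theorem 3.]{Olson1971}; equivalently it follows from the one-variable instance of Theorem \ref{3.1} (that is, \cite[Theorem 4.1.]{Paneta2012}) applied to the identity function. Either way $\varphi(\A)\Le\varphi(\B)$, which is $\mathrm{(iii)}$.

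The core of the argument is $\mathrm{(ii)}\Rightarrow\mathrm{(i)}$. Here I would fix $h\in\hh$ and introduce the finite Borel measures $\mu_1(\cdot)=\is{E_\A(\cdot)h}{h}$ and $\mu_2(\cdot)=\is{E_\B(\cdot)h}{h}$ on $\rbb^\kappa$, noting that $\mu_1(\rbb^\kappa)=\mu_2(\rbb^\kappa)=\|h\|^2$ because $E_\A(\rbb^\kappa)=E_\B(\rbb^\kappa)=I$. By the operator calculus, $\is{\varphi(\A)h}{h}=\int_{\rbb^\kappa}\varphi\,\D\mu_1$ and $\is{\varphi(\B)h}{h}=\int_{\rbb^\kappa}\varphi\,\D\mu_2$, so hypothesis $\mathrm{(ii)}$ says exactly that $\int_{\rbb^\kappa}\varphi\,\D\mu_1\Le\int_{\rbb^\kappa}\varphi\,\D\mu_2$ for every bounded increasing continuous $\varphi$. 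This is precisely condition (iv) of Theorem \ref{3.1} with $\iota=\kappa$, so that ``$\iota$-increasing'' means increasing and $\Le_\iota$ coincides with $\Le$. Since the two total masses agree, Theorem \ref{3.1}(c) upgrades this to condition (i) of that theorem, namely $\mu_2(\varOmega)\Le\mu_1(\varOmega)$ for every $\varOmega\in L(\rbb^\kappa,\Le)\cap\borel(\rbb^\kappa)$. Specializing to the lower set $\varOmega=(-\infty,x]$ gives $\is{F_\B(x)h}{h}\Le\is{F_\A(x)h}{h}$; as $h\in\hh$ is arbitrary, $F_\B(x)\Le F_\A(x)$ for every $x\in\rbb^\kappa$, which is $\mathrm{(i)}$.

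The main obstacle is exactly this passage from integral inequalities to the pointwise ordering of the joint spectral distributions. The Remark following Theorem \ref{3.1} shows that the pointwise inequality $\mu_2((-\infty,x])\Le\mu_1((-\infty,x])$ for all $x$ is strictly weaker than condition (i) and would not suffice; the strength of $\mathrm{(ii)}$ lies in testing against \emph{all} increasing functions, which, together with the equal-mass normalization, is precisely what makes Theorem \ref{3.1}(c) applicable and forces the full lower-set inequality. By contrast, the reduction in $\mathrm{(i)}\Rightarrow\mathrm{(iii)}$ is routine once Theorem \ref{nowe} is available, the only mild point being the standard fact that the spectral order refines the usual order on the set of bounded selfadjoint operators.
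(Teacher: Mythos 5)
Your proposal is correct, but for the key implication (ii)$\Rightarrow$(i) it takes a different route from the paper's official proof. The paper reduces (ii)$\Rightarrow$(i) to the one-dimensional case coordinate-by-coordinate: it invokes Theorem \ref{spekwiel} to reduce the claim to $A_j\Lec B_j$, tests (ii) against the compositions $f\circ\pi_j$ with $f\colon\rbb\to\rbb$ bounded increasing continuous, and then cites the one-variable equivalence \cite[Theorem 6.5.]{Paneta2012}. You instead fix $h$, pass to the scalar measures $\is{E_\A(\cdot)h}{h}$ and $\is{E_\B(\cdot)h}{h}$, and apply the multidimensional integral-inequality result Theorem \ref{3.1}(c) with $\iota=\kappa$ to upgrade the integral inequalities to the lower-set inequality, then specialize to $(-\infty,x]$. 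This is exactly the alternative proof the author records in the Remark immediately following the corollary, so it is fully endorsed by the paper; its advantage is that it stays self-contained within the present paper (Theorem \ref{3.1} is proved here), whereas the paper's main proof leans on the earlier one-dimensional results. Your observation that the equal-mass normalization is what makes Theorem \ref{3.1}(c) applicable, and that the pointwise inequality on the sets $(-\infty,x]$ alone would be too weak, is accurate and well placed. For (i)$\Rightarrow$(iii) your argument agrees with the paper's (Theorem \ref{nowe} plus the fact that $\Lec$ refines $\Le$ on bounded selfadjoint operators); the paper simply cites \cite[Proposition 6.3.]{Paneta2012} for the latter step, while you re-derive it via a scalar shift and Olson's theorem, which is a correct, if slightly longer, justification.
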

\begin{proof}
(i)$\Rightarrow$(iii) Notice that $\varphi(\bf{A}),\varphi(\bf{B})\in\ogr{\hh}$ for every bounded Borel function $\varphi\colon\rbb^{\kappa}\rightarrow\rbb$ and apply Theorem \ref{spekwiel} and \cite[Proposition 6.3.]{Paneta2012}.

(iii)$\Rightarrow$(ii) It is obvious.

(ii)$\Rightarrow$(i) In view of Theorem \ref{spekwiel}, it is sufficient to show that $A_j\Lec B_j$ for every $j=1,\ldots,\kappa$. Fix $j\in\{1,\ldots\kappa\}$. Let $f\colon\rbb\rightarrow\rbb$ be an arbitrary continuous and bounded increasing function. In this case  $\varphi:=f\circ\pi_j\colon\rbb^{\kappa}\rightarrow\rbb$
is a continuous and bounded increasing function. By \cite[Lemma 6.5.2.]{Bir-Sol} and condition (ii) we get that
\begin{align*}
f(A_j)=(f\circ\pi_j)(\A)=\varphi(\A)\Le \varphi(\B)=(f\circ\pi_j)(\B)=f(B_j).
\end{align*}
 Since this inequality holds for every bounded continuous increasing function $f\colon\rbb\rightarrow\rbb$, we deduce that $A_j\Lec B_j$ by  \cite[Theorem 6.5.]{Paneta2012}.
\end{proof}
\begin{rem} Note that there is another proof of the implication (ii)$\Rightarrow$(i). Indeed, similarly as in the proof of the implication (ii)$\Rightarrow$(i) in \cite[Theorem 6.5.]{Paneta2012}, we obtain that
 \begin{equation*} \int_{\rbb^\kappa}\varphi(x)  \is{ E_\A(\D x)h}{h}\Le \int_{\rbb^\kappa}\varphi(x)  \is{ E_\B(\D x)h}{h}, \quad h\in\hh,
 \end{equation*} for every bounded increasing continuous function  $\varphi\colon\rbb^{\kappa}\rightarrow\rbb$. Thus, by Theorem \ref{3.1}, $\is{F_\B(x)h}{h}\Le \is{F_\A(x)h}{h}$ for every $x\in\rbb^\kappa$ and $h\in\hh$. This means that $\A\Lec\B$.
\end{rem}
Let $\A\in\shk$ and
 $\varphi=(\varphi_1,\ldots,\varphi_\iota)\colon\rbb^\kappa\to\rbb^\iota$ be a Borel function. Set
\begin{equation}\label{phiA}
 \varphi(\A):=(\varphi_1(\A),\ldots,\varphi_{\iota}(\A)).
\end{equation}
It is evident that $\varphi(\A)\in\mathcal{S}_c(\hh,\iota)$.
\begin{cor}\label{monot1}
Let $\kappa,\iota\in\nbb$.
     Suppose that $\A,\B\in\shk$ and $\A\Lec\B$. If $\varphi\colon\rbb^\kappa\to\rbb^\iota$ is an increasing Borel function, then $\varphi(\A)\Lec\varphi(\B)$.
\end{cor}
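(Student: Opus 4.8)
The plan is to reduce this multidimensional statement to its one-dimensional coordinates, where the work has already been done. The first observation is that, since $\varphi=(\varphi_1,\ldots,\varphi_\iota)$ is increasing, each coordinate $\varphi_k\colon\rbb^\kappa\to\rbb$ is an increasing Borel function: if $x\Le y$ in $\rbb^\kappa$, then $\varphi(x)\Le\varphi(y)$ in $\rbb^\iota$, which by the definition of the order on $\rbb^\iota$ means precisely $\varphi_k(x)\Le\varphi_k(y)$ for every $k\in\{1,\ldots,\iota\}$.

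With this in hand, the argument has two steps. First I would apply Theorem \ref{nowe}(a) (equivalently, the implication (i)$\Rightarrow$(ii) of Theorem \ref{spekwiel}) to each scalar function $\varphi_k$: the hypothesis $\A\Lec\B$ yields $\varphi_k(\A)\Lec\varphi_k(\B)$ in $\mathcal{S}_c(\hh,1)$ for every $k=1,\ldots,\iota$. Second, recalling from \eqref{phiA} that $\varphi(\A)=(\varphi_1(\A),\ldots,\varphi_\iota(\A))$ and $\varphi(\B)=(\varphi_1(\B),\ldots,\varphi_\iota(\B))$ are $\iota$-tuples in $\mathcal{S}_c(\hh,\iota)$, I would invoke the implication (iii)$\Rightarrow$(i) of Theorem \ref{spekwiel}, read for $\iota$-tuples: the coordinatewise relations just obtained force $\varphi(\A)\Lec\varphi(\B)$, which is the claim.

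The one point that deserves an explicit word is that Theorem \ref{spekwiel} is stated for $\kappa$-tuples, whereas here it is applied to $\iota$-tuples; this is harmless, since $\iota$ is an arbitrary element of $\nbb$ and the proof of Theorem \ref{spekwiel} uses nothing about the ambient dimension. Thus there is no real obstacle, only bookkeeping. For completeness I note the direct route that bypasses Theorem \ref{spekwiel} entirely: by the measure transport theorem one has $F_{\varphi(\mathbf{C})}(y)=E_{\mathbf{C}}\big(\varphi^{-1}((-\infty,y])\big)$ for $\mathbf{C}=\A,\B$ and $y\in\rbb^\iota$; the set $\varphi^{-1}((-\infty,y])$ is a Borel lower set in $(\rbb^\kappa,\Le)$ by Proposition \ref{oddzielros}; and Lemma \ref{lematpodst} taken with $\iota=\kappa$ then gives $E_{\B}(\varphi^{-1}((-\infty,y]))\Le E_{\A}(\varphi^{-1}((-\infty,y]))$, i.e.\ $F_{\varphi(\B)}(y)\Le F_{\varphi(\A)}(y)$ for all $y$, which is again the conclusion.
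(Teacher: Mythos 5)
Your proposal is correct and is essentially identical to the paper's proof: the paper likewise passes to the coordinates $\varphi_j=\pi_j\circ\varphi$, applies Theorem \ref{spekwiel} to get $\varphi_j(\A)\Lec\varphi_j(\B)$ for each $j$, and then uses the equivalence (i)$\Leftrightarrow$(iii) of Theorem \ref{spekwiel} (read for $\iota$-tuples) to conclude. Your remark about the dimension bookkeeping and the alternative direct route via Lemma \ref{lematpodst} are both sound but not needed.
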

\begin{proof} First let us observe, that the functions $\varphi_j:=\pi_j\circ \varphi$, where $j=1,\ldots,\iota$,  are increasing, since $\varphi$ is increasing.
  Applying Theorem \ref{spekwiel} to $\varphi_j$ we get that $\varphi_j(\A)\Lec\varphi_j(\B)$ for $j=1,\ldots,\iota$. By Theorem \ref{spekwiel}, this is equivalent to the inequality $\varphi(\A)\Lec\varphi(\B)$.
\end{proof}

 Now let us turn our attention to two questions concerning the multidimensional spectral order. The first problem is whether the partially ordered sets $(\shk,\Lec)$ and $(\B_{cs}(\hh,\kappa),\Lec)$ are a conditionally complete lattice if $\kappa>1$.
The second question that we study is whether spectral order satisfies some kind of vector order properties.

The first question is answered in the negative by the following example. In fact, the sets $(\shk,\Lec)$ and $(\B_{cs}(\hh,\kappa),\Lec)$ are not even a lattice for $\kappa>1$.
 \begin{exa}\label{lattice} Let $\hh=\cbb^3$ be a Hilbert space with the standard orthonormal basis $\{(1,0,0),(0,1,0),(0,0,1)\}$. Then $(\mathcal{S}(\hh,2),\Lec)$ and $(\B_{cs}(\hh,2),\Lec)$ are not a lattice. Indeed, let $M_1$, $M_2$, $N_1$, and $N_2$ are defined to be the linear subspaces of $\cbb^3$ of the form
 \begin{align*}
 M_1=\cbb^2\times\{0\},\quad M_2=\{(z_1,z_2,z_3)\in\cbb^3\colon z_1=z_2\},\\
\quad N_1=\{
0\}\times\cbb^2 \quad \text{ and } \quad N_2=\{(z_1,z_2,z_3)\in\cbb^3\colon z_2=z_3\}.\end{align*}
 Define $\A=(A_1,A_2)$ and $\B=(B_1,B_2)$ by
 \begin{align*}
 A_1&=P_{M_1}=\left[
                  \begin{array}{ccc}
                    1 & 0 & 0 \\
                    0 & 1 & 0 \\
                    0 & 0 & 0 \\
                  \end{array}
                \right], \quad  A_2=P_{M_2}=\left[
                  \begin{array}{ccc}
                    \frac 12 & \frac 12 & 0 \\
                    \frac 12 & \frac 12 & 0 \\
                    0 & 0 & 1 \\
                  \end{array}
                \right],\\
                B_1&=P_{N_1}=\left[
                  \begin{array}{ccc}
                    0 & 0 & 0 \\
                    0 & 1 & 0 \\
                    0 & 0 & 1 \\
                  \end{array}
                \right], \quad \textmd{and}\quad  B_2=P_{N_2}=\left[
                  \begin{array}{ccc}
                                      1 & 0 & 0 \\
                    0&\frac 12 & \frac 12 \\
                    0&\frac 12 & \frac 12  \\
                  \end{array}
                \right].
 \end{align*} It is obvious that $\A, \B\in \B_{cs}(\hh,2)$. Suppose that $\mathbf{C}=(C_1,C_2)$ is the infimum of $\{\A,\B\}$ in $(\B_{cs}(\hh,2),\Lec)$. Then, by Theorem \ref{spekwiel} and the definition of infimum, we obtain that
 \begin{align}\label{1}  C_j\Lec \inf\{P_{M_j},P_{N_j}\}=P_{M_j\cap N_j} \text{ for } j=1,2.
 \end{align}On the other hand, $(P_{M_1\cap N_1},0),\, (0,P_{M_2\cap N_2})\in\B_{cs}(\hh,2)$. Moreover, the set $\{\A,\B\}$ is bounded below by  $(P_{M_1\cap N_1},0)$ and $(0,P_{M_2\cap N_2})$ with respect to the multidimensional spectral order. Hence, by the definition of infimum and Theorem \ref{spekwiel}, we deduce that
  \begin{align}\label{2} P_{M_j\cap N_j} \Lec C_j \text{ for } j=1,2.
 \end{align}
 Inequalities \eqref{1} and \eqref{2} imply that $\mathbf{C}=(P_{M_1\cap N_1},P_{M_2\cap N_2})\notin B_{cs}(\hh,2)$ since \begin{align*} P_{M_1\cap N_1}=\left[
 \begin{array}{ccc}
 0& 0 & 0 \\
                                                                                                                                                                      0&1 & 0 \\
                                                                                                                                                                      0& 0 & 0 \\
                                                                                                                                                                   \end{array}                                                                                                                                                          \right]
 \textmd{ and } P_{M_2\cap N_2}=\frac{1}{3}\left[
                                  \begin{array}{ccc}
                                    1 & 1 & 1 \\
                                    1 & 1 & 1 \\
                                    1 & 1 & 1 \\
                                  \end{array}
                                \right]
 \end{align*} do not commute. Thus $(\B_{cs}(\hh,2),\Lec)$ is not a lattice. The same argument shows that $(\mathcal{S}_c(\hh,2),\Lec)$ is not a lattice too.
 \end{exa}
Now we are going to consider the second question. As shown by Olson (cf. \cite{Olson1971}) the relation
$0\Lec A-B$ may not imply $A\Lec B$ for arbitrary bounded selfadjoint operators $A,B$ unless $A$ and $B$ commute. Thus the spectral order is not a vector order. It is known also that $A\Lec B$ imply $\lambda A\Lec \lambda B$ whenever $\lambda\in[0,\infty)$. It appears that this properties can be strengthened, which is the consequence of Theorem \ref{spekwiel} and Theorem \ref{nowe}.
\begin{cor}\label{summno}
  Let $\A=(A_1,A_2),\, \B=(B_1,B_2)\in \mathcal{S}_c(\hh,2)$. Assume that $A_1\Lec B_1$ and $A_2\Lec B_2$. Then
\begin{equation}\label{suma}\overline{A_1+A_2}\Lec \overline{B_1+B_2}.
\end{equation}
\end{cor}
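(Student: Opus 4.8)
The plan is to exhibit $\overline{A_1+A_2}$ as the spectral integral of the increasing coordinate-sum function and then to invoke Theorem \ref{nowe}. First I would note that, by Theorem \ref{spekwiel}, the two one-dimensional relations $A_1\Lec B_1$ and $A_2\Lec B_2$ are together equivalent to the single multidimensional relation $\A\Lec\B$; this is precisely what allows the machinery of Theorem \ref{nowe} to be brought to bear.

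Next I would introduce $\varphi\colon\rbb^2\to\rbb$, $\varphi(x_1,x_2)=x_1+x_2$. It is continuous, hence Borel, and increasing in the sense of the paper: $x\Le y$ forces $x_1\Le y_1$ and $x_2\Le y_2$, whence $\varphi(x)\Le\varphi(y)$. Thus hypothesis (a) of Theorem \ref{nowe} is satisfied.

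The real content is the identification $\varphi(\A)=\overline{A_1+A_2}$, and likewise $\varphi(\B)=\overline{B_1+B_2}$. Using $A_j=\int_{\rbb^2}x_j\,E_\A(\D x)$ together with the Stone--von Neumann calculus for the joint spectral measure, the operator $\varphi(\A)=\int_{\rbb^2}(x_1+x_2)\,E_\A(\D x)$ is selfadjoint and contains $A_1+A_2$ on $\dz{A_1}\cap\dz{A_2}$; moreover it coincides with the closure of that algebraic sum (cf.\ \cite{Bir-Sol,Schmudgen}). I expect this to be the step demanding the most care, because for unbounded $A_1,A_2$ the sum is only essentially selfadjoint on its natural domain, so the domain bookkeeping must be carried out through $E_\A$ rather than naively at the level of the individual spectral measures. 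Granting this identification, Theorem \ref{nowe}(a) applied to $\varphi$ and the relation $\A\Lec\B$ yields $\varphi(\A)\Lec\varphi(\B)$, which is exactly the desired conclusion \eqref{suma}.
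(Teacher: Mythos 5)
Your proposal is correct and follows essentially the same route as the paper: reduce to $\A\Lec\B$ via Theorem \ref{spekwiel}, apply the order-preservation result to the increasing function $\varphi(x_1,x_2)=x_1+x_2$, and identify $\varphi(\A)$ with $\overline{A_1+A_2}$ (the paper cites \cite[Theorem 5.4.7]{Bir-Sol} for precisely this identification, which is the step you correctly flag as the one requiring care). The only cosmetic difference is that you invoke Theorem \ref{nowe} directly where the paper uses Theorem \ref{spekwiel}(i)$\Rightarrow$(ii), which is itself an immediate consequence of Theorem \ref{nowe}.
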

\begin{proof} Note that $\A\Lec\B$ by assumptions and Theorem \ref{spekwiel}. Let $\varphi\colon\rbb^2\rightarrow\rbb$ be a function  defined by $\varphi(x_1,x_2):=x_1+x_2$ for $x_1,x_2\in\rbb$. By \cite[Theorem 5.4.7]{Bir-Sol}
and Theorem \ref{spekwiel} applied to the function $\varphi$, which is increasing, we get that
\begin{align*}
\overline{A_1+A_2}=\varphi(A_1,A_2)\Lec \varphi(B_1,B_2)=\overline{B_1+B_2}.
\end{align*}This completes the proof of \eqref{suma}.
\end{proof}

\begin{cor}\label{iloczyn} Suppose that $A,B,C$ are selfadjoint operators in $\hh$. Assume also that $C$ is positive and commutes with $A$ and $B$. If $A\Lec B$, then
\begin{equation*}\overline{AC}\Lec \overline{BC}.
\end{equation*}
\end{cor}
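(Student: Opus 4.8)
The plan is to mirror the proof of Corollary \ref{summno}: realize $\overline{AC}$ as a spectral integral of the product function applied to a commuting pair, and then transport the spectral order through Theorem \ref{nowe}. First I would observe that, since $C$ commutes with both $A$ and $B$, the pairs $\A:=(A,C)$ and $\B:=(B,C)$ belong to $\mathcal{S}_c(\hh,2)$. The hypothesis $A\Lec B$ together with the reflexivity $C\Lec C$ gives, via the implication (iii)$\Rightarrow$(i) of Theorem \ref{spekwiel}, that $\A\Lec\B$.

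Next I would fix the Borel function $\varphi\colon\rbb^2\to\rbb$ defined by $\varphi(x_1,x_2):=x_1x_2$ and record the identifications $\varphi(\A)=\overline{AC}$ and $\varphi(\B)=\overline{BC}$, which follow from the functional calculus for finitely many commuting selfadjoint operators (the same machinery underlying \cite[Theorem 5.4.7]{Bir-Sol} that was used in Corollary \ref{summno} for the sum). Thus it suffices to establish $\varphi(\A)\Lec\varphi(\B)$.

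The main obstacle—and the reason this cannot be concluded from part (a) of Theorem \ref{nowe} as in Corollary \ref{summno}—is that $\varphi(x_1,x_2)=x_1x_2$ is \emph{not} increasing on all of $\rbb^2$: for $x_2<0$, raising $x_1$ lowers the product. The key point is that the positivity of $C$ confines the relevant spectral mass to the half-space where the product is monotone in the first variable, which is precisely the situation covered by the second alternative of Theorem \ref{nowe}. Concretely, I would apply that alternative with $\iota=1$ and $\varOmega=[0,\infty)\in\borel(\rbb^{\kappa-\iota})$, where $\kappa=2$. Condition (a') holds because $A_2=C=B_2$; condition (b') holds because for $x\Le_1 y$ with $x,y\in\rbb\times[0,\infty)$ one has $x_2=y_2\Ge 0$, whence $x_1x_2\Le y_1x_2$, so $\varphi$ is $1$-increasing on $\rbb^\iota\times\varOmega=\rbb\times[0,\infty)$; and condition (c') holds because positivity of $C$ yields $\supp E_C\subseteq[0,\infty)$, so that $E_{\A''}(\rbb\setminus\varOmega)=E_C((-\infty,0))=0$ and likewise $E_{\B''}(\rbb\setminus\varOmega)=0$. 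Theorem \ref{nowe} then gives $\varphi(\A)\Lec\varphi(\B)$, which by the identification above is exactly $\overline{AC}\Lec\overline{BC}$.
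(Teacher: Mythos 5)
Your proposal is correct and follows essentially the same route as the paper: form the commuting pairs $(A,C)$ and $(B,C)$, deduce $\A\Lec\B$ from Theorem \ref{spekwiel}, and apply the second alternative of Theorem \ref{nowe} with $\iota=1$ and $\varOmega=[0,\infty)$ to the product function, using positivity of $C$ to verify $E_C((-\infty,0))=0$. Your write-up is merely more explicit than the paper's about why part (a) of Theorem \ref{nowe} is unavailable and about checking conditions (a')--(c').
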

\begin{proof} Let $\A=(A,C)$ and $\B=(B,C)$. By Theorem \ref{spekwiel} (iii), $\A\Lec \B$. Take a function $\psi\colon\rbb^2\rightarrow\rbb$ given by $\psi(x_1,x_2):=x_1\cdot x_2$ for  $x_1,x_2\in\rbb$. It is easily seen that the function $\psi$ is 1-increasing on $\rbb\times[0,\infty)$ and $E_C((-\infty,0))=0$. Thus, by Theorem \ref{nowe} and  \cite[Theorem 5.4.7]{Bir-Sol}, we obtain that
\begin{align*}
\overline{AC}=\psi(A,C)\Lec \psi(B,C)=\overline{BC}.
\end{align*}
\end{proof}
We close this section with the example of an order for normal operators.  In \cite{Brenna2012} Brenna and Flori defined  spectral order for bounded normal operators. The definition of this order can be easily adopted also in the case of unbounded normal operators.
\begin{exa}\label{normalny1} Let $T$ be a normal operator in $\hh$, i.e. densely defined operator such that $\dz{T}=\dz{T^*}$ and $\|Th\|=\|T^*h\|$ for every $h\in\dz{T}$. The notation $\mathcal{N}(\hh)$ will mean the set of all normal operators in $\hh$ and $\mathbf{B}_n(\hh):=\mathcal{N}(\hh)\cap\ogr{\hh}$. Denote by $E_T$ the spectral measure of $T$ on $\borel(\cbb)$, where $\cbb\simeq \rbb^2$ via $\rho(x,y)=x+iy$, $x,y\in\rbb$. Consider the real and imaginary part of $T$ defined by
\begin{align*} \mathrm{Re}\, T=\int_{\cbb}\frac{z+\bar{z}}{2}E_T(dz), \quad
\mathrm{Im}\, T=\int_{\cbb}\frac{z-\bar{z}}{2i}E_T(dz).
\end{align*}
Then
\begin{align*} \mathrm{Re}\, T=\overline{\left(\frac{T+T^*}{2}\right)},\quad \mathrm{Im}\, T=\overline{\left(\frac{T-T^*}{2i}\right)}, \quad \textrm{and } T=\mathrm{Re}\, T+i\mathrm{Im}\, T.
\end{align*}
Note that $\mathrm{Re}\, T$ and $\mathrm{Im}\, T$ are selfadjoint. Moreover, from the \cite[Theorem 5.4.10.]{Bir-Sol} we derive
\begin{align*}
E_{\mathrm{Re}\, T}(\Delta)=E_T(\rho(\Delta\times\rbb)) \textrm{ and } E_{\mathrm{Im}\, T}(\Delta)=E_T(\rho(\rbb\times\Delta)), \quad \Delta\in\borel(\rbb).
\end{align*}
Therefore $(\mathrm{Re}\, T,\mathrm{Im}\, T)\in\mathcal{S}_c(\hh,2)$.
Now, let $T_1,T_2\in\mathcal{N}(\hh)$. Then we write
\begin{equation*}
T_1\Lec T_2 \textrm{ if and only if } F_{\mathrm{Re}\, T_2}(x)F_{\mathrm{Im}\, T_2}(y)\Le F_{\mathrm{Re}\, T_1}(x)F_{\mathrm{Im}\, T_1}(y), \, (x,y)\in\rbb^2.
\end{equation*}
Applying equations \eqref{miaraEa} and \eqref{fa} we can rewrite this as
\begin{equation}\label{normalny}
T_1\Lec T_2 \textrm{ if and only if } F_{(\mathrm{Re}\, T_2,\mathrm{Im}\, T_2)}(x,y)\Le F_{(\mathrm{Re}\, T_1,\mathrm{Im}\, T_1)}(x,y), \, (x,y)\in\rbb^2.
\end{equation}
\end{exa}
The following propositions shows that the spectral order for pairs of commuting selfadjoint operators and the spectral order for normal operators are isomorphic in the category of partially ordered sets.
\begin{pro}\label{normalny2}The mappings $\Psi$ and $\Psi_0$ are order isomorphisms, where $\Psi\colon(\mathcal{N}(\hh),\Lec)\ni T\to (\mathrm{Re}\, T,\mathrm{Im}\, T)\in(\mathcal{S}_c(\hh,2),\Lec)$ and $\Psi_0\colon(\mathbf{B}_n(\hh),\Lec)\ni T\to(\mathrm{Re}\, T,\mathrm{Im}\, T)\in(\mathbf{B}_{cs}(\hh,2),\Lec)$.\end{pro}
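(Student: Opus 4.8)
The plan is to show that each of $\Psi$ and $\Psi_0$ is a bijection that simultaneously preserves and reflects $\Lec$; an order-preserving and order-reflecting bijection is by definition an order isomorphism. I would treat $\Psi$ first and then obtain $\Psi_0$ by restriction.

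The order part is essentially a restatement of the definitions. By \eqref{normalny}, for $T_1,T_2\in\mathcal{N}(\hh)$ we have $T_1\Lec T_2$ precisely when
\begin{equation*}
F_{(\mathrm{Re}\, T_2,\mathrm{Im}\, T_2)}(x,y)\Le F_{(\mathrm{Re}\, T_1,\mathrm{Im}\, T_1)}(x,y),\qquad (x,y)\in\rbb^2,
\end{equation*}
and the latter is exactly the defining condition for $(\mathrm{Re}\, T_1,\mathrm{Im}\, T_1)\Lec(\mathrm{Re}\, T_2,\mathrm{Im}\, T_2)$ in $\mathcal{S}_c(\hh,2)$. Hence $T_1\Lec T_2$ if and only if $\Psi(T_1)\Lec\Psi(T_2)$, so $\Psi$ both preserves and reflects the order; well-definedness of $\Psi$ was already noted in Example \ref{normalny1}.

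For bijectivity I would pass to the level of spectral measures. Using the formulas of Example \ref{normalny1}, the multiplicativity $E_T(\alpha)E_T(\beta)=E_T(\alpha\cap\beta)$, the bijectivity of $\rho$, and \eqref{miaraEa}, one computes
\begin{align*}
E_{(\mathrm{Re}\, T,\mathrm{Im}\, T)}(\sigma_1\times\sigma_2)
&=E_{\mathrm{Re}\, T}(\sigma_1)E_{\mathrm{Im}\, T}(\sigma_2)
=E_T\big(\rho(\sigma_1\times\rbb)\big)E_T\big(\rho(\rbb\times\sigma_2)\big)\\
&=E_T\big(\rho(\sigma_1\times\sigma_2)\big),\qquad \sigma_1,\sigma_2\in\borel(\rbb),
\end{align*}
so that $E_{(\mathrm{Re}\, T,\mathrm{Im}\, T)}(\cdot)=E_T(\rho(\cdot))$. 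Injectivity is then immediate, since $\Psi(T_1)=\Psi(T_2)$ forces $E_{T_1}=E_{T_2}$ and hence $T_1=T_2$ (equivalently, use $T=\mathrm{Re}\, T+i\,\mathrm{Im}\, T$). For surjectivity, given $(A_1,A_2)\in\mathcal{S}_c(\hh,2)$ with joint spectral measure $E_{(A_1,A_2)}$, I would set $\widetilde E(\beta):=E_{(A_1,A_2)}(\rho^{-1}(\beta))$ for $\beta\in\borel(\cbb)$ and define the normal operator $T:=\int_\cbb z\,\widetilde E(\D z)$. Then $E_T=\widetilde E$, and by \eqref{wspolnamiara},
\begin{equation*}
\mathrm{Re}\, T=\int_\cbb\frac{z+\bar z}{2}\,\widetilde E(\D z)=\int_{\rbb^2}x\,E_{(A_1,A_2)}(\D(x,y))=A_1,
\end{equation*}
and similarly $\mathrm{Im}\, T=A_2$, whence $\Psi(T)=(A_1,A_2)$.

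Finally, $\Psi_0$ is the restriction of $\Psi$ to $\mathbf{B}_n(\hh)$: since $T\in\ogr{\hh}$ if and only if $E_T$ has bounded support, the identity $E_{(\mathrm{Re}\, T,\mathrm{Im}\, T)}=E_T\circ\rho$ shows that $T$ is bounded exactly when $\mathrm{Re}\, T,\mathrm{Im}\, T\in\ogr{\hh}$, so $\Psi$ carries $\mathbf{B}_n(\hh)$ bijectively onto $\mathbf{B}_{cs}(\hh,2)$ and the order-embedding property is inherited. I expect the only real work to be the surjectivity step, namely producing $T$ from $(A_1,A_2)$ and checking that its real and imaginary parts are genuinely $A_1$ and $A_2$; the order statement itself is a direct translation of the definitions and should cause no difficulty.
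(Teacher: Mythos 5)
Your argument is correct and follows essentially the same route as the paper: the order-preservation/reflection is read off directly from \eqref{normalny}, and bijectivity is obtained by passing to spectral measures via $\rho$, with the inverse given by $T=\int_\cbb z\,\big(E_{(A_1,A_2)}\circ\rho^{-1}\big)(\D z)$, exactly the operator $T_\A$ the paper constructs. The only cosmetic difference is that you verify $\mathrm{Re}\,T=A_1$ by computing the spectral integral of $\frac{z+\bar z}{2}$, whereas the paper checks the equality $E_{\mathrm{Re}\,T_\A}=E_{A_1}$ at the level of spectral measures; both are sound.
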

\begin{proof}
Let $\A=(A_1,A_2)\in\mathcal{S}_c(\hh,2)$. Define the measure $E_\A\circ\rho^{-1}$ on $\borel(\cbb)$ by $E_\A\circ\rho^{-1}(\Delta)=E_\A(\rho^{-1}(\Delta))$, $\Delta\in\borel(\cbb)$. Then $T_{\A}=\int_{\cbb}z E_{\A}\circ\rho^{-1}(\D z)$ is a normal operator in $\hh$ such that $\mathrm{Re}\, T_\A=A_1$ and $\mathrm{Im}\, T_{\A}=A_2$.
Indeed, we have
\begin{align*}
E_{\mathrm{Re}\, T_\A}(\Delta)=E_{T_\A}(\rho(\Delta\times\rbb))=E_{\A}\circ\rho^{-1}(\rho(\Delta\times\rbb))=E_{A_1}(\Delta),\, \Delta\in\borel(\rbb),
\end{align*}
and
\begin{align*}
E_{\mathrm{Im}\, T_\A}(\Delta)=E_{T_\A}(\rho(\rbb\times\Delta))=E_{\A}\circ\rho^{-1}(\rho(\rbb\times\Delta))=E_{A_2}(\Delta),\, \Delta\in\borel(\rbb).
\end{align*}
In partcular, $\Psi$ is bijective. According to the definition of the multidimensional spectral order and \eqref{normalny}, $\Psi$ is order preserving. At the end, note that $\Psi(\mathbf{B}_n(\hh))=\mathbf{B}_{cs}(\hh,2)$, which completes the proof.
\end{proof}

\section{Joint bounded vectors} The aim of this section is to describe the joint resolution of the identity of $\A\in\shk$ in terms of joint bounded vectors. The main result
of this section Proposition \ref{wektogr} will be used in Section \ref{kolejny} to characterize  multidimensional spectral order in the case of positive operators.

First we are going to recall the definition of joint bounded vectors. Let $x=(x_1,\ldots,
x_{\kappa})\in\rbb^{\kappa}$ and $\alpha\in[0,\infty)^\kappa$. Set $x^\alpha:=x_1^{\alpha_1}\cdot\ldots\cdot
x_{\kappa}^{\alpha_{\kappa}}$ provided that $x_j^{\alpha_j}$ are well-defined for every $j=1,\ldots,\kappa$. Define functions $\varphi_{\alpha}\colon\rbb^\kappa\to\rbb$, $\alpha\in\zbb^\kappa_+$, and $\psi_{\beta}\colon\rbb^\kappa\to\rbb$, $\beta\in[0,\infty)^\kappa$, by the following formulas
\begin{align}\label{psibeta}
\varphi_{\alpha}(x)=x^{\alpha} \text{ and } \psi_\beta(x)=\vert
x_1\vert^{\beta_1}\cdot\ldots\cdot \vert
x_{\kappa}\vert^{\beta_{\kappa}}\chi_{[0,\infty)^\kappa}(x), \quad x\in\rbb^\kappa.
\end{align}
 The monomial $\A^\alpha$ is defined by
 \begin{align*}  \A^\alpha=
 \varphi_{\alpha}(\A) \text{ for }
\alpha\in\zbb_+^\kappa.
 \end{align*}
 Similarly, the fractional power $\A^\alpha$ is given by
 \begin{align*}  \A^\alpha=  \psi_\alpha(\A)& \textmd{ if } \supp E_\A\subseteq [0,\infty)^{\kappa}\textmd{ and }\alpha\in[0,\infty)^\kappa.
 \end{align*}
 Note that $\psi_\alpha(\A)=\varphi_{\alpha}(\A)$,
provided that $\alpha\in\zbb_+^\kappa$ and $\supp
E_\A\subseteq [0,\infty)^{\kappa}$.

Let $\A\in\shk$. We define the following sets
\begin{align*}
\dzn{\A}&:=\bigcap_{\alpha\in\zbb_+^{\kappa}}\dz{\A^\alpha},\\
\bscr_{a}(\A)&:= \bigcup_{\substack{c\in\rbb\\
c > 0}} \;
\left\lbrace h\in \dzn{\A}\colon \Vert \A^\alpha h\Vert\Le ca^\alpha \textmd{ for every}\:
\alpha\in\zbb_+^{\kappa} \right\rbrace
\text{ for } a\in
[0,\infty)^{\kappa},\\
\text{and }
\bscr(\A)&:=\bigcup_{a\in
(0,\infty)^{\kappa}}\bscr_{a}(\A).
\end{align*}
 We say that $h\in\hh$ is a \textit{joint
bounded vector} of $\A$ if
$h\in\bscr(\A)$. More information on joint bounded vectors can be found in \cite{sam}.
For the sake of completeness, Proposition \ref{wektogr} will be preceded by the following two Lemmata (see also \cite[Theorem 1.13.]{sam}).
\begin{lemma}\label{dodatnia} If $\A=(A_1,\ldots,A_{\kappa})\in\shk$, then
\begin{equation*} \supp E_{\A}\subseteq[0,\infty)^{\kappa} \text{ if and only if } A_j \text{ is positive for } j=1,\ldots,\kappa.
\end{equation*}
\end{lemma}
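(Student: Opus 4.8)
The plan is to reduce both implications to the single-variable characterization recalled in the Prerequisites, namely that a selfadjoint operator $A_j$ is positive if and only if $\supp E_{A_j}\subseteq[0,\infty)$, or equivalently $E_{A_j}((-\infty,0))=0$. The bridge between the joint measure and its marginals is equation \eqref{eapij}, which gives
\begin{equation*}
E_{A_j}((-\infty,0))=E_{\A}(\pi_j^{-1}((-\infty,0))),\qquad \pi_j^{-1}((-\infty,0))=\{x\in\rbb^\kappa\colon x_j<0\}.
\end{equation*}
The key set-theoretic observation is the decomposition
\begin{equation*}
\rbb^\kappa\setminus[0,\infty)^\kappa=\bigcup_{j=1}^{\kappa}\pi_j^{-1}((-\infty,0)),
\end{equation*}
together with the fact that each $\pi_j^{-1}((-\infty,0))$ is individually contained in $\rbb^\kappa\setminus[0,\infty)^\kappa$.

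For necessity, I would assume $\supp E_{\A}\subseteq[0,\infty)^\kappa$. Since $[0,\infty)^\kappa$ is closed and $\supp E_{\A}$ is the least closed set whose complement is $E_{\A}$-null, it follows that $E_{\A}(\rbb^\kappa\setminus[0,\infty)^\kappa)=0$. Fixing $j$ and using monotonicity of $E_{\A}$ together with $\pi_j^{-1}((-\infty,0))\subseteq\rbb^\kappa\setminus[0,\infty)^\kappa$, I would obtain $E_{A_j}((-\infty,0))=E_{\A}(\pi_j^{-1}((-\infty,0)))=0$, whence $A_j$ is positive.

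For sufficiency, I would assume every $A_j$ is positive, so $E_{A_j}((-\infty,0))=0$ for all $j$. Applying Lemma \ref{lemmiara} to the finite union above (padding with empty sets if a genuinely countable family is wanted) yields
\begin{equation*}
E_{\A}(\rbb^\kappa\setminus[0,\infty)^\kappa)=\bigvee_{j=1}^{\kappa}E_{\A}(\pi_j^{-1}((-\infty,0)))=\bigvee_{j=1}^{\kappa}E_{A_j}((-\infty,0))=0,
\end{equation*}
and since $[0,\infty)^\kappa$ is closed this gives $\supp E_{\A}\subseteq[0,\infty)^\kappa$.

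The argument is essentially routine once relation \eqref{eapij} is in place; the two points demanding a little care are the passage from "the open complement is null" to "the closed set contains the support" (which relies only on the definition of $\supp E_{\A}$ as the least closed set with null complement), and the sufficiency step, which uses the supremum formula of Lemma \ref{lemmiara} rather than mere subadditivity. For a finite union this is harmless, but it is the step that genuinely invokes the spectral-measure structure, so it is where I would be most careful.
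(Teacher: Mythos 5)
Your proposal is correct and follows essentially the same route as the paper: the paper's proof likewise combines equation \eqref{eapij} with Lemma \ref{lemmiara} to identify $\bigvee_{j=1}^{\kappa}E_{A_j}((-\infty,0))$ with $E_{\A}(\rbb^\kappa\setminus[0,\infty)^\kappa)$, from which both implications follow at once. Your only deviation is splitting the argument into two directions (using monotonicity for necessity), which is a harmless expansion of the same identity.
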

\begin{proof} Applying equalities \eqref{eapij} and \eqref{miara} we obtain that
\begin{align*}
\bigvee_{j=1}^\kappa E_{A_j}((-\infty,0))=\bigvee_{j=1}^\kappa E_{\A}(\rbb\times\ldots\times\underbrace{(-\infty,0)}_j\times\ldots\times\rbb)=E_{\A}(\rbb^\kappa\setminus[0,\infty)^\kappa),
\end{align*}
what proves the claim.
\end{proof}
If $\supp E_{\A}\subseteq[0,\infty)^{\kappa}$, then we say
that $\A$ is \textit{positive}.

\begin{lemma}\label{dnieskonczony} Let $\A=(A_1,\ldots,A_{\kappa})\in\shk$. Then
\begin{equation*}
\dzn{\A}=\bigcap_{j=1}^\kappa\dzn{A_j}.
\end{equation*}
Moreover, if $\A$ is positive, then
\begin{equation}\label{ajn1}
\dzn{\A}=\bigcap_{\alpha\in[0,\infty)^{\kappa}}\dz{\A^\alpha}.
\end{equation}
\end{lemma}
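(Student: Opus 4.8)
The plan is to rewrite every domain as an integrability condition for a monomial against the scalar measures $\mu_h(\cdot):=\is{E_\A(\cdot)h}{h}$, $h\in\hh$, which are finite positive Borel measures on $\rbb^\kappa$, and then to reduce the two equalities to elementary comparisons between products and sums of powers. Recall that for a Borel function $\varphi\colon\rbb^\kappa\to\rbb$ one has $\dz{\varphi(\A)}=\{h\in\hh\colon\int_{\rbb^\kappa}|\varphi(x)|^2\,\D\mu_h(x)<\infty\}$. Applying this to $\varphi_\alpha$ shows that $h\in\dz{\A^\alpha}$ if and only if $\int_{\rbb^\kappa}|x^\alpha|^2\,\D\mu_h(x)<\infty$, while \eqref{eapij} together with the measure transport theorem gives $\dz{A_j^n}=\{h\in\hh\colon\int_{\rbb^\kappa}|x_j|^{2n}\,\D\mu_h(x)<\infty\}$ for all $j$ and $n$; in particular $\dz{A_j^n}=\dz{\A^{ne_j}}$, so $\dzn{A_j}=\bigcap_{n\in\nbb}\dz{\A^{ne_j}}$.

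For the first equality, the inclusion $\dzn{\A}\subseteq\bigcap_{j=1}^\kappa\dzn{A_j}$ is immediate, since $\{ne_j\colon 1\Le j\Le\kappa,\ n\in\nbb\}\subseteq\zbb_+^\kappa$. The reverse inclusion is the crux. Fix $h$ with $\int_{\rbb^\kappa}|x_j|^{2n}\,\D\mu_h<\infty$ for all $j$ and $n$, and fix $\alpha\in\zbb_+^\kappa$. The inequality of arithmetic and geometric means gives $\prod_{j=1}^\kappa|x_j|^{2\alpha_j}\Le\sum_{j=1}^\kappa|x_j|^{2\kappa\alpha_j}$ for every $x\in\rbb^\kappa$, so $\int_{\rbb^\kappa}|x^\alpha|^2\,\D\mu_h\Le\sum_{j=1}^\kappa\int_{\rbb^\kappa}|x_j|^{2\kappa\alpha_j}\,\D\mu_h<\infty$. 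Hence $h\in\dz{\A^\alpha}$ for every $\alpha\in\zbb_+^\kappa$, that is, $h\in\dzn{\A}$.

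For the second equality assume $\A$ is positive, so by Lemma \ref{dodatnia} each $\mu_h$ is concentrated on $[0,\infty)^\kappa$; there $\psi_\alpha(x)=x^\alpha$, and thus $h\in\dz{\A^\alpha}$ if and only if $\int_{[0,\infty)^\kappa}x^{2\alpha}\,\D\mu_h<\infty$ for every $\alpha\in[0,\infty)^\kappa$. The inclusion $\bigcap_{\alpha\in[0,\infty)^\kappa}\dz{\A^\alpha}\subseteq\dzn{\A}$ is trivial, because $\zbb_+^\kappa\subseteq[0,\infty)^\kappa$ and $\psi_\alpha(\A)=\varphi_\alpha(\A)$ for $\alpha\in\zbb_+^\kappa$. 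For the converse fix $h\in\dzn{\A}$ and $\alpha\in[0,\infty)^\kappa$, and set $\beta_j:=\lceil\alpha_j\rceil\in\zbb_+$. Since $2\alpha_j\Le2\beta_j$, on $[0,\infty)$ we have $t^{2\alpha_j}\Le1+t^{2\beta_j}$, whence $x^{2\alpha}\Le\prod_{j=1}^\kappa(1+x_j^{2\beta_j})$ on $[0,\infty)^\kappa$, and the right-hand side expands into a finite sum of monomials $x^{2\gamma}$ with $\gamma\in\zbb_+^\kappa$. Integrating term by term and using that $\int x^{2\gamma}\,\D\mu_h<\infty$ for each such $\gamma$ (by the definition of $\dzn{\A}$) yields $\int x^{2\alpha}\,\D\mu_h<\infty$, i.e. $h\in\dz{\A^\alpha}$.

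The main obstacle is the reverse inclusion in each equality, and in both cases it is handled by dominating a single product of powers by a finite sum of pure powers: the arithmetic--geometric mean estimate in the first part and the bound $t^{2\alpha_j}\Le1+t^{2\beta_j}$ in the second. Verifying that the dominating functions are $\mu_h$-integrable is where the hypotheses are actually used; the remaining steps---expressing domains as integrability conditions, the measure transport step, and the reduction to the support $[0,\infty)^\kappa$ in the positive case---are routine.
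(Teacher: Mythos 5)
Your proof is correct and follows essentially the same route as the paper: both reduce each domain to an integrability condition for $x^{2\alpha}$ against the scalar measures $\mu_h=\is{E_\A(\cdot)h}{h}$ and then dominate the mixed monomial by pure powers of the coordinates. The only difference is the domination device — you use the pointwise AM--GM bound $\prod_j|x_j|^{2\alpha_j}\Le\sum_j|x_j|^{2\kappa\alpha_j}$ and, for the fractional case, $t^{2\alpha_j}\Le 1+t^{2\beta_j}$ with a product expansion, where the paper invokes a generalized H\"older inequality and Jensen's inequality; your version is marginally more elementary but mathematically equivalent in effect.
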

\begin{proof} Let
$D:=\displaystyle\bigcap_{j=1}^\kappa\dzn{A_j}$.  We have to show that $D\subseteq \dzn{\A}$ since
 inclusion $\dzn{\A}\subseteq D$ is obvious.

Let $h\in D$ and $\alpha\in\zbb_+^\kappa$. Consider a
measure $\mu_h$ on $\borel(\rbb^\kappa)$ defined by
$\mu_h(\sigma):=\is{E_\A(\sigma)h}{h}$ for
$\sigma\in\borel(\rbb^\kappa)$. Using the definition of
$D$ we derive that $\int_{\rbb^\kappa}x_j^{2\kappa\alpha_j}\D\mu_h(x)<\infty$ for
$j=1,\ldots,\kappa$. This and \cite[Corollary
2.11.5]{bogachev} imply that
$\int_{\rbb^\kappa}x^{2\alpha}\D\mu_h(x)<\infty$. Thus $h\in
\dz{\A^\alpha}$ for every $\alpha\in\zbb_+^\kappa$.
Consequently, $h\in \dzn{\A}$.

 For the ''moreover'' part, consider only the inclusion $\subseteq$ since the inclusion $\supseteq$ is evident.
Let $h\in \dzn{\A}$ and $\alpha\in[0,\infty)^\kappa$. Without loss of generality we may assume that $\Vert h\Vert=1$. Take $\beta\in\nbb^\kappa$ such that $\kappa\alpha\Le\beta$. By assumption, $\int_{[0,\infty)^\kappa}x_j^{2\beta_j}\D \mu_h(x)<\infty$. Applying \cite[Corollary 2.11.5]{bogachev} and Jensen inequality, we obtain that
  \begin{align*} \left(\int_{[0,\infty)^\kappa} x^{2\alpha}\D \mu_h(x)\right)^{\frac{1}{2}}&\Le \prod_{j=1}^\kappa\left(\int_{[0,\infty)^\kappa}( x_j^{\alpha_j})^{2\kappa}\D \mu_h(x)\right)^{\frac{1}{2\kappa}}\\ &\Le \prod_{j=1}^\kappa\left(\int_{[0,\infty)^\kappa}x_j^{2\beta_j}\D \mu_h(x)\right)^{\frac{\alpha_j}{2\beta_j}}<\infty.
  \end{align*}  Hence $h\in
\bigcap_{\alpha\in[0,\infty)^{\kappa}}\dz{\A^\alpha}$, which completes proof.
\end{proof}

 In the sequel we will use the following notation
$\dscr^{\Lambda}(\A):=\bigcap_{\alpha\in\Lambda}\dz{\A^\alpha}$ for
$\Lambda\subseteq[0,\infty)^\kappa$ and
$\vert\alpha\vert\index{$\vert\alpha\vert$\:}:=\alpha_1+\ldots+\alpha_{\kappa}$
for
$\alpha=(\alpha_1,\ldots,\alpha_{\kappa})\in[0,\infty)^{\kappa}$.
\begin{pro}\label{wektogr} Let $\A\in\shk$ be positive, $h\in\hh$, and $a\in [0,\infty)^{\kappa}$. Assume that $\Lambda\subseteq[0,\infty)^{\kappa}$ satisfies the condition
\begin{equation}\label{lambdasup}
\sup_{\alpha\in\Lambda}\frac{\alpha_j}{1+|\alpha|}=1, \quad j=1,\ldots,\kappa.
\end{equation} Then the following conditions are equivalent:
\begin{enumerate}[{\em(i)}]
\item $h\in\bscr_{a}(\A)$,
\item $h\in\ob{F_{\A}(a)}$,
  \item $h\in\dzn{\A}$ and there exists real number $c>0$ such that
\begin{equation*} \Vert \A^\alpha
h\Vert\Le ca^{\alpha}\: \textmd{for every}\:
\alpha\in[0,\infty)^\kappa,
\end{equation*}
\end{enumerate}
  Moreover, if $h\in\dscr^{\Lambda}(\A)\cap \left(\bigcup_{j=1}^{\kappa}\jd{A_j}\right)^\perp$ and there exists real number $c>0$ such that
\begin{equation}\label{dlalambda} \Vert \A^\alpha
h\Vert\Le ca^{\alpha}\: \textmd{for every}\:
\alpha\in\Lambda,\end{equation} then $h\in\ob{F_{\A}(a)}$.

\end{pro}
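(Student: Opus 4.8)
The plan is to reduce every assertion to a statement about the scalar measure $\mu_h(\sigma):=\is{E_\A(\sigma)h}{h}$, $\sigma\in\borel(\rbb^\kappa)$. Since $\A$ is positive, Lemma~\ref{dodatnia} gives $\supp E_\A\subseteq[0,\infty)^\kappa$, so $\mu_h$ is concentrated on $[0,\infty)^\kappa$; moreover $\|\A^\alpha h\|^2=\int_{[0,\infty)^\kappa}x^{2\alpha}\,\D\mu_h(x)$ for $h\in\dz{\A^\alpha}$, and, writing $[0,a]:=\prod_{j=1}^\kappa[0,a_j]=(-\infty,a]\cap[0,\infty)^\kappa$, the membership $h\in\ob{F_\A(a)}$ is equivalent to $\mu_h(\rbb^\kappa\setminus[0,a])=0$. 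Thus the whole proposition becomes a question about the moments and the support of the finite measure $\mu_h$.

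For the equivalence I would run the cycle (ii)$\Rightarrow$(iii)$\Rightarrow$(i)$\Rightarrow$(ii). If (ii) holds, then $x^{2\alpha}\Le a^{2\alpha}$ on $[0,a]\supseteq\supp\mu_h$ for every $\alpha\in[0,\infty)^\kappa$, so all the integrals are finite (hence $h\in\dzn{\A}$) and $\|\A^\alpha h\|\Le a^\alpha\|h\|$, which is (iii) with $c=\|h\|$. The implication (iii)$\Rightarrow$(i) is immediate, as (i) is merely (iii) restricted to $\alpha\in\zbb_+^\kappa$. The only substantial step is (i)$\Rightarrow$(ii), which I would carry out coordinatewise: for fixed $j$ the pushforward $\nu_j:=\mu_h\circ\pi_j^{-1}$ is a finite measure on $[0,\infty)$ with $\int_{[0,\infty)}t^{2n}\,\D\nu_j=\|\A^{n e_j}h\|^2\Le c^2a_j^{2n}$ for all $n\in\nbb$, and the Chebyshev-type bound $b^{2n}\nu_j((b,\infty))\Le\int t^{2n}\,\D\nu_j\Le c^2a_j^{2n}$ for $b>a_j$ gives $\nu_j((b,\infty))\Le c^2(a_j/b)^{2n}\to0$; hence $\nu_j((a_j,\infty))=0$, i.e. $\mu_h(\{x_j>a_j\})=0$ for every $j$, and with $\supp\mu_h\subseteq[0,\infty)^\kappa$ this is exactly (ii).

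For the ``moreover'' part I would first translate the two extra hypotheses. By \eqref{eapij} we have $\jd{A_j}=E_\A(\{x_j=0\})\hh$, so $h\in(\bigcup_j\jd{A_j})^\perp=\bigcap_j\jd{A_j}^\perp$ means $\mu_h(\{x_j=0\})=0$ for every $j$; that is, $\mu_h$ is concentrated on $(0,\infty)^\kappa$. If some $a_k=0$, then choosing $\alpha\in\Lambda$ with $\alpha_k>0$ (possible by \eqref{lambdasup}) the bound \eqref{dlalambda} forces $\int x^{2\alpha}\,\D\mu_h=0$ while $x^{2\alpha}>0$ $\mu_h$-a.e., whence $h=0$ and there is nothing to prove; so I may assume all $a_k>0$ and rescale by $y_k:=x_k/a_k$. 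This reduces the claim to the following: if $\tilde\mu$ is a finite measure on $(0,\infty)^\kappa$ with $\int y^{2\alpha}\,\D\tilde\mu\Le c^2$ for all $\alpha\in\Lambda$, then $\tilde\mu(\{y_j>1\})=0$ for each $j$.

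This last reduction is the heart of the argument and the main obstacle. Fix $j$, $\varepsilon>0$, $N\in\nbb$ and set $B_N:=\{y\colon y_j>1+\varepsilon,\ 1/N\Le y_k\Le N\text{ for }k\neq j\}$. On $B_N$ one has $y^{2\alpha}\Ge(1+\varepsilon)^{2\alpha_j}N^{-2(|\alpha|-\alpha_j)}$, so $\int y^{2\alpha}\,\D\tilde\mu\Le c^2$ yields $\tilde\mu(B_N)\Le c^2(1+\varepsilon)^{-2\alpha_j}N^{2(|\alpha|-\alpha_j)}$ for every $\alpha\in\Lambda$. Now \eqref{lambdasup} supplies a sequence $\alpha^{(n)}\in\Lambda$ with $\alpha^{(n)}_j/(1+|\alpha^{(n)}|)\to1$, forcing $\alpha^{(n)}_j\to\infty$ and $(|\alpha^{(n)}|-\alpha^{(n)}_j)/\alpha^{(n)}_j\to0$; the exponent $\alpha^{(n)}_j\big(-\log(1+\varepsilon)+\frac{|\alpha^{(n)}|-\alpha^{(n)}_j}{\alpha^{(n)}_j}\log N\big)\to-\infty$, so $\tilde\mu(B_N)=0$. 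Letting $N\to\infty$ (using concentration on $(0,\infty)^\kappa$) gives $\tilde\mu(\{y_j>1+\varepsilon\})=0$, and $\varepsilon\downarrow0$ gives $\tilde\mu(\{y_j>1\})=0$. Thus $\mu_h$ is concentrated on $[0,a]$ and $h\in\ob{F_\A(a)}$. I expect the delicate points to be exactly that the perpendicularity hypothesis is what legitimizes the uniform lower bound $y_k\Ge1/N$ on $B_N$ (mass sitting on some $\{x_k=0\}$ would be annihilated by the factor $y_k^{2\alpha_k}$ and could otherwise escape the estimate), and that condition \eqref{lambdasup} is precisely what produces a sequence in $\Lambda$ along which $\alpha_j$ simultaneously diverges and dominates $|\alpha|$.
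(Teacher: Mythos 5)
Your proof is correct, and its skeleton matches the paper's: everything is reduced to the scalar measure $\mu_h(\cdot)=\is{E_\A(\cdot)h}{h}$, the implication (ii)$\Rightarrow$(iii) is the same support-plus-moment-bound computation, and the ``moreover'' part rests on the same two ideas as the paper's (lower-bounding $x^{2\alpha}$ on a box that stays away from the coordinate hyperplanes and exceeds $a_j$ in the $j$-th coordinate, then sending $\alpha$ along a sequence in $\Lambda$ realizing \eqref{lambdasup} so that $\alpha_j\to\infty$ while $(|\alpha|-\alpha_j)/\alpha_j\to 0$). The genuine differences are two. First, for (i)$\Rightarrow$(ii) the paper reduces to the one-dimensional case coordinatewise and cites \cite[Proposition 5.1.]{Paneta2012} together with $F_\A(a)=F_{A_1}(a_1)\cdots F_{A_\kappa}(a_\kappa)$, whereas you give a self-contained Chebyshev-type moment estimate on the marginals $\nu_j=\mu_h\circ\pi_j^{-1}$; this buys independence from the earlier paper at essentially no extra cost. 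Second, in the ``moreover'' part you normalize by $a$ and dispose of the case $a_k=0$ explicitly via the convention $0^{\alpha_k}=0$ for $\alpha_k>0$, and you conclude directly that each set $\{y_j>1\}$ is $\tilde\mu$-null, while the paper argues by contradiction, extracting one set $D_{j,n}$ of positive measure and deriving the absurd inequality $a_j+\tfrac1n\Le a_j$ after taking $|\alpha|$-th roots; your version is slightly cleaner on the degenerate boundary cases, which the paper passes over quickly. Your closing remarks correctly identify the role of the orthogonality hypothesis (it is what lets the mass be confined to $(0,\infty)^\kappa$ so the lower bound $y_k\Ge 1/N$ eventually captures everything) and of condition \eqref{lambdasup}.
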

\begin{proof} Let $\mu_h(\sigma):=\is{E_{\A}(\sigma)h}{h}$
for every $\sigma\in\borel(\rbb^{\kappa})$.

(i)$\Rightarrow$(ii) By Lemma \ref{dnieskonczony},
$h\in\dzn{A_j}$ for every
$j=1,\ldots,\kappa$. At the same time $\Vert
A_j^nh\Vert=\Vert
\A^{ne_j}h\Vert\Le ca_j^n$ for every $n\in\zbb_+$. This
implies that $h\in \bscr_{a_j}(A_j)$. Hence, by \cite[Proposition 5.1.]{Paneta2012},
$h\in\ob{F_{A_j}(a_j)}$ for every $j=1,\ldots,\kappa$.
Thus $h\in\ob{F_{\A}(a)}$, since
$F_{\A}(a)=F_{A_1}(a_1)\ldots
F_{A_{\kappa}}(a_{\kappa})$.

(ii)$\Rightarrow$(iii) Let
$\alpha\in[0,\infty)^{\kappa}$. Then, by (ii) and positivity of
$\A$, we obtain that $\supp
\mu_h\subseteq[0,a]$ and
\begin{align*}
\int_{[0,\infty)^{\kappa}} x^{2\alpha}\,
\D\mu_h(x)=\int_{[0,a]}
x^{2\alpha}\, \D\mu_h(x)\Le a^{2\alpha}\Vert
h\Vert^2<\infty.
\end{align*}
Thus $h\in\dz{\A^\alpha}$ and
\begin{align*}
\Vert \A^\alpha h\Vert=\Big(\int_{[0,\infty)^{\kappa}}
x^{2\alpha}\, d\mu_h(x)\Big)^{\frac{1}{2}}\Le
a^{\alpha}\Vert h\Vert.
\end{align*}

(iii)$\Rightarrow$(i) It is obvious.

 For the ''moreover'' part, assume that
$h\in\dscr^{\Lambda}(\A)\cap \left(\bigcup_{j=1}^{\kappa}\jd{A_j}\right)^\perp$ satisfies equation \eqref{dlalambda}. Suppose that
$h\notin\ob{F_{\A}(a)}$. In particular, $\mu_h(\rbb^{\kappa}\backslash(-\infty,a])>0$.
Applying \cite[Theorem
   6.1.3]{Bir-Sol} and \eqref{eapij} we obtain that \begin{align*}E_\A(\rbb\times\ldots\times\underbrace{\{0\}}_j\times\ldots\times\rbb)h= E_{A_j}(\{0\})h=0,\quad j=1,\ldots,\kappa. \end{align*}
   What is more
   $E_{\A}(\rbb^{\kappa}\backslash[0,\infty)^{\kappa})=0$,
   since $\A$ is positive. Thus
\begin{equation}\label{muh}
\mu_h((0,\infty)^{\kappa}\backslash[0,a])=\mu_h(\rbb^{\kappa}\backslash(-\infty,a])>0.
\end{equation}
Put
$$D_{j,n}:=\left(\frac{1}{n},\infty\right)\times\ldots\times\underbrace{\left(a_j+\frac{1}{n},\infty\right)}_j\times\ldots\times\left(\frac{1}{n},\infty\right)$$
for $j\in\{1,\ldots,\kappa\}$ and $n\in\nbb$. Then
\begin{equation*}
(0,\infty)^{\kappa}\backslash[0,a]=\bigcup_{j=1}^{\kappa}\bigcup_{n=1}^{\infty}D_{j,n}.
\end{equation*} This together with \eqref{muh} allow us to choose $j\in\{1,\ldots,\kappa\}$ and $n\in\nbb$ such that
\begin{equation*}
d:=\mu_h(D_{j,n})>0.
\end{equation*}
Fix $\alpha\in\Lambda$. Applying inequality
\eqref{dlalambda} we obtain that
\begin{align*}
dn^{-2\vert \alpha \vert}\left(na_j+1\right)^{2\alpha_j}&\Le \int_{D_{j,n}} x^{2\alpha} \, d\mu_h(x)\\
&\Le \int_{\rbb^{\kappa}} x^{2\alpha} \,
d\mu_h(x)=\Vert \A^\alpha h\Vert^2
\overset{\eqref{dlalambda}}{\Le}c^2a^{2\alpha}.
\end{align*}
Hence
\begin{equation}\label{gammaalpha}
d^{\frac{1}{2\vert
\alpha\vert}}n^{-1}\left(na_j+1\right)^{\frac{\alpha_j}{\vert
\alpha \vert}}\Le c^{\frac{1}{\vert \alpha
\vert}}a^{\frac{\alpha}{\vert \alpha \vert}},\:
\textmd{for every}\: \alpha\in\Lambda.
\end{equation}
By condition \eqref{lambdasup}, we can find a
sequence
$\{\alpha(m)\}_{m=1}^{\infty}\subseteq\Lambda$ such
that
\begin{equation*}
\lim_{m\to\infty}\vert \alpha(m)\vert=\infty \textmd{
and } \lim_{m\to\infty}\frac{\alpha_j(m)}{\vert
\alpha(m)\vert}=1,
\end{equation*}
 where
 $\alpha(m)=(\alpha_1(m),\ldots,\alpha_\kappa(m))$. Then inequality \eqref{gammaalpha} implies that
\begin{align*}
a_j+\frac{1}{n}&=\lim_{m\to\infty}d^{\frac{1}{2\vert
\alpha(m)\vert}}n^{-1}(na_j+1)^{\frac{\alpha_j(m)}{\vert
\alpha(m) \vert}}\\
&\Le\lim_{m\to\infty}c^{\frac{1}{\vert \alpha(m)
\vert}}a^{\frac{\alpha(m)}{\vert \alpha(m)
\vert}}=\lim_{m\to\infty}a_j^{\frac{\alpha_j(m)}{\vert
\alpha(m) \vert}}=a_j,
\end{align*} which is a contradiction. Thus $h\in\ob{F_{\A}(a)}$.
\end{proof}
Assume that $\A$ and $\Lambda$ satisfy the assumptions of Proposition \ref{wektogr}. The following example shows that without the condition \eqref{dlalambda}
it may happen that $h\in\dscr^{\Lambda}(\A)\cap \left(\bigcup_{j=1}^{\kappa}\jd{A_j}\right)^\perp$ but $h\notin\dzn{\A}$.
\begin{exa}
Let $\Lambda:=[0,\infty)\times(0,\infty)$ and $\hh=L^2([1,\infty),m)$, where $m$ is Lebesgue measure on Borel subsets of $[1,\infty)$. Define $\A=\left(M_{\varphi_1},M_{\varphi_2}\right)$, where the functions
$\varphi_1,\varphi_2\colon [1,\infty) \rightarrow \rbb$
are given by $\varphi_1(x)= x$ and
$\varphi_2(x)=e^{-x}$ for $x\in\rbb$. Then, by Lemma
\ref{fmphi}, $\A\in\mathcal{S}_c({\mathcal H},2)$. $\A$ is also positive since $\varphi_j(x)\Ge 0$
for every $x\in\rbb$ and $j=1,2$. Let
$h(x):=\frac{1}{ x}$ for $x\in[1,\infty)$. It is
evident that $h\in\hh$ and the set $\Lambda$ satisfies
the condition \eqref{lambdasup}.

We are going to show that $h\in\dscr^{\Lambda}(\A)\backslash\dzn{\A}$. If
$\alpha\in\Lambda$, then $\alpha_2>0$. Hence
\begin{align*}
\int_{[1,\infty)}\vert
(\varphi_1(x))^{\alpha_1}(\varphi_2(x))^{\alpha_2}h(x)\vert^2\:
\D m(x)=\int_{[1,\infty)} x^{2\alpha_1-2}e^{-2\alpha_2 x}\: \D m(x)<\infty.
\end{align*}
Thus, by Lemma \ref{fmphi} (iii),
$h\in\dz{M_{\varphi_1^{\alpha_1}\varphi_2^{\alpha_2}}}=\dz{\A^\alpha}$. On the other hand,
\begin{equation*}
\int_{[1,\infty)}\vert h(x)\varphi_1(x)\vert^2\:
\D m(x)=\int_{[1,\infty)}1\: \D m(x)=\infty,
\end{equation*}
which means that $h\notin\dz{A_1}\supseteq\dzn{\A}$, which completes the proof.
\end{exa}

\section{Monomials and multidimensional spectral order}\label{dziedzina}
Let $\A,\B\in\shk$. In this section we discuss the relations between inequality $\A\Lec\B$ and the domains of $\A^\alpha$ and $\B^\alpha$ for $\alpha\in\zbb^\kappa$. In Theorem \ref{indziedzin}, the appropriate condition  guaranteeing the inclusion $\dz{\B^{\alpha}}\subseteq\dz{\A^{\alpha}}$ will be formulated.


We begin our consideration by some facts about
positive and negative parts of selfadjoint
operators. Let $A$ be a selfadjoint operator in $\hh$. We denote the positive part and the negative part of $A$ by $A_+$ and $A_-$, respectively. By the definition,
\begin{equation*}
A_\pm=f_\pm(A),
\end{equation*}
where the functions $f_\pm\colon\rbb\to\rbb$ are given by
\begin{equation}\label{fpm}
f_{\pm}(x):=\frac{1}{2}(x\pm \vert x\vert),\quad x\in\rbb.
\end{equation} It is known that
 $A_+$ and $A_-$ are selfadjoint. Moreover, $A_+$ is positive.
Since the functions $f_+$ and $f_-$ are increasing, the following property holds.
\begin{lemma}
If $A$ and $B$ are selfadjoint operators in $\hh$ such that $A\Lec B$, then $A_+\Lec B_+$ and $A_-\Lec B_-$.
\end{lemma}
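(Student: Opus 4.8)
The plan is to read this as the one–variable ($\kappa=1$) instance of the order–preservation results already established. Recall that the paper identifies $\mathcal{S}_c(\hh,1)$ with the set of all selfadjoint operators in $\hh$ and notes that for $\kappa=1$ the multidimensional spectral order reduces to the ordinary spectral order $\Lec$. Hence Theorem \ref{nowe} and Theorem \ref{spekwiel} are available with $\kappa=1$, and a single selfadjoint operator is trivially a member of $\mathcal{S}_c(\hh,1)$.

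First I would verify that the two functions $f_+$ and $f_-$ from \eqref{fpm} are increasing Borel functions on $\rbb=\rbb^1$. Indeed, $f_+(x)=\max\{x,0\}$ and $f_-(x)=\min\{x,0\}$ for $x\in\rbb$, and each of these is a continuous, piecewise–linear, nondecreasing function of $x$; in particular both are increasing in the sense of the present paper and are Borel measurable. Then, since $A\Lec B$ by hypothesis, I would apply Theorem \ref{nowe}(a) (equivalently, the implication (i)$\Rightarrow$(ii) of Theorem \ref{spekwiel}) first to the increasing Borel function $\varphi=f_+$ and then to $\varphi=f_-$. This yields
\[
A_+=f_+(A)\Lec f_+(B)=B_+ \quad\text{and}\quad A_-=f_-(A)\Lec f_-(B)=B_-,
\]
which is precisely the assertion.

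There is essentially no obstacle here: the statement is an immediate corollary of the general machinery, the only point requiring (routine) verification being the monotonicity and Borel measurability of $f_\pm$. If one preferred to avoid invoking the multidimensional theorems, the same conclusion would follow directly from the definition of the spectral order: by the measure transport formula one has $E_{f_\pm(A)}((-\infty,x])=E_A\big(f_\pm^{-1}((-\infty,x])\big)$, and for each increasing $f_\pm$ the preimage $f_\pm^{-1}((-\infty,x])$ is a lower half–line whose right endpoint depends monotonically on $x$, so the inequality $F_B(x)\Le F_A(x)$ transfers to $f_\pm(A)$ and $f_\pm(B)$. Citing Theorem \ref{nowe} is, however, the cleanest route and is the one I would adopt.
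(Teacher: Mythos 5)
Your proof is correct and takes essentially the same route as the paper: the paper's justification is precisely the observation that $f_+$ and $f_-$ are increasing (Borel, indeed continuous) functions, so the conclusion follows from the one-variable case of Theorem \ref{nowe}/Theorem \ref{spekwiel}. Your explicit verification that $f_+(x)=\max\{x,0\}$ and $f_-(x)=\min\{x,0\}$ are nondecreasing, and the application of the order-preservation theorem with $\kappa=1$, is exactly the intended argument.
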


 Let $\varepsilon=(\varepsilon_1,\ldots,\varepsilon_{\kappa})\in\{-,+\}^{\kappa}$. The function $f_{\varepsilon_1}\times\ldots\times f_{\varepsilon_{\kappa}}\colon\rbb^{\kappa}\rightarrow\rbb^\kappa$, where $f_{\pm}$ are given by formulas \eqref{fpm}, will be dentoted by  $f_{\varepsilon}$. In particular,
\begin{equation}\label{pijfe}
\pi_j\circ f_{\varepsilon}=f_{\varepsilon_j}\circ \pi_j,\quad \varepsilon\in\{-,+\}^{\kappa},\, j=1,\ldots,\kappa.
\end{equation}
 In the sequel we will use the following multi-sign $\dagger=(\dagger_1,\ldots,\dagger_{\kappa})\in\{-,+\}^{\kappa}$, where
$\dagger_j=+$ for $j=1,\ldots,\kappa$.
 For $\mathbf{C}=(C_1,\ldots,C_{\kappa})\in\shk$, we define $\mathbf{C}_{\varepsilon}\in\shk$ by
\begin{equation}\label{defceps}\mathbf{C}_{\varepsilon}:=(f_{\varepsilon_1}(C_{1}),\ldots, f_{\varepsilon_{\kappa}}(C_{\kappa})).
\end{equation} Then, by \eqref{phiA}, \eqref{pijfe}, and \cite[Lemma 6.5.2.]{Bir-Sol}, we derive that
\begin{align*}
f_{\varepsilon}(\mathbf{C})=\big((\pi_j\circ f_{\varepsilon})(\mathbf{C})\big)_{j=1}^{\kappa}=
((f_{\varepsilon_j}\circ \pi_j)(\mathbf{C}))_{j=1}^{\kappa}
=
(f_{\varepsilon_j}(C_j))_{j=1}^{\kappa}=\mathbf{C}_{\varepsilon}.
\end{align*}
Moreover, by equality
 $f_{\varepsilon_j}(C_j)=(\pi_j\circ f_{\varepsilon})(\mathbf{C})$ and \cite[Theorem 5.4.10]{Bir-Sol}, we get that
\begin{equation*}
f_{\varepsilon_j}(C_j)=\int_{\rbb^{\kappa}}x_jE_{\mathbf{C}}\circ f_{\varepsilon}^{-1}(dx),\, j=1,\ldots,\kappa.
\end{equation*}Thus, by \cite[Theorem 6.5.1]{Bir-Sol},
 \begin{equation}\label{miaraepsilon}
E_{\mathbf{C}_{\varepsilon}}=E_{\mathbf{C}}\circ f_{\varepsilon}^{-1}.
\end{equation}
\begin{lemma}
If $\mathbf{C}\in\shk$, then
\begin{equation}\label{aepsilon}
\dz{\mathbf{C}^{\alpha}}=\bigcap_{\varepsilon\in\{-,+\}^{\kappa}}\dz{(\mathbf{C}_{\varepsilon})^{\alpha}}.
\end{equation}
\end{lemma}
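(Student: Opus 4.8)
The plan is to convert the operator-domain identity \eqref{aepsilon} into an equivalent statement about the finiteness of scalar integrals, and then to settle that statement by decomposing $\rbb^\kappa$ into the $2^\kappa$ closed orthants cut out by the signs $\varepsilon$. First I would fix $h\in\hh$ and $\alpha\in\zbb_+^\kappa$, and put $\mu_h(\cdot):=\is{E_{\mathbf{C}}(\cdot)h}{h}$. Since $\mathbf{C}^\alpha=\varphi_\alpha(\mathbf{C})$ with $\varphi_\alpha(x)=x^\alpha$ a real Borel function, the Stone--von Neumann calculus gives $\dz{\mathbf{C}^{\alpha}}=\{h\in\hh\colon \int_{\rbb^\kappa}|x^{\alpha}|^2\,\D\mu_h(x)<\infty\}$, and likewise $\dz{(\mathbf{C}_{\varepsilon})^{\alpha}}=\{h\in\hh\colon \int_{\rbb^\kappa}|y^{\alpha}|^2\,\D\nu_h^\varepsilon(y)<\infty\}$, where $\nu_h^\varepsilon(\cdot):=\is{E_{\mathbf{C}_{\varepsilon}}(\cdot)h}{h}$. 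By \eqref{miaraepsilon} one has $\nu_h^\varepsilon=\mu_h\circ f_{\varepsilon}^{-1}$, so the measure transport theorem \cite[p.~163, Theorem C.]{Halmos1950} rewrites the defining integral of $\dz{(\mathbf{C}_{\varepsilon})^{\alpha}}$ as $\int_{\rbb^\kappa}|f_{\varepsilon}(x)^{\alpha}|^2\,\D\mu_h(x)$. Thus \eqref{aepsilon} reduces to the purely scalar claim that $\int_{\rbb^\kappa}|x^{\alpha}|^2\,\D\mu_h<\infty$ if and only if $\int_{\rbb^\kappa}|f_{\varepsilon}(x)^{\alpha}|^2\,\D\mu_h<\infty$ for every $\varepsilon\in\{-,+\}^\kappa$.

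Next I would record the two elementary pointwise facts on which everything rests. Writing $Q_{\varepsilon}:=\{x\in\rbb^\kappa\colon \varepsilon_jx_j\Ge 0,\ j=1,\ldots,\kappa\}$, I note that on $Q_{\varepsilon}$ one has $f_{\varepsilon_j}(x_j)=x_j$ for every $j$, hence $f_{\varepsilon}(x)=x$ and $|f_{\varepsilon}(x)^{\alpha}|^2=|x^{\alpha}|^2$ there, while the closed orthants cover the whole space, $\bigcup_{\varepsilon}Q_{\varepsilon}=\rbb^\kappa$. On the other hand, from $|f_{\varepsilon_j}(t)|\Le |t|$ for all $t\in\rbb$ (the convention $0^0=1$ taking care of the indices with $\alpha_j=0$, where the corresponding factor is identically $1$ in both integrands), raising to the power $2\alpha_j$ and multiplying over $j$ yields the global bound $|f_{\varepsilon}(x)^{\alpha}|^2\Le |x^{\alpha}|^2$ for every $x\in\rbb^\kappa$.

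Finally I would assemble the two inclusions. The inclusion $\subseteq$ is immediate from the global bound: if $\int|x^{\alpha}|^2\,\D\mu_h<\infty$, then $\int|f_{\varepsilon}(x)^{\alpha}|^2\,\D\mu_h\Le \int|x^{\alpha}|^2\,\D\mu_h<\infty$ for each $\varepsilon$, so $h\in\dz{(\mathbf{C}_{\varepsilon})^{\alpha}}$. For $\supseteq$, using the covering together with the coincidence of the two integrands on each orthant, I estimate $\int_{\rbb^\kappa}|x^{\alpha}|^2\,\D\mu_h\Le \sum_{\varepsilon}\int_{Q_{\varepsilon}}|x^{\alpha}|^2\,\D\mu_h=\sum_{\varepsilon}\int_{Q_{\varepsilon}}|f_{\varepsilon}(x)^{\alpha}|^2\,\D\mu_h\Le\sum_{\varepsilon}\int_{\rbb^\kappa}|f_{\varepsilon}(x)^{\alpha}|^2\,\D\mu_h$, which is finite as soon as every summand is. I expect the only delicate point to be bookkeeping rather than substance: the orthants overlap on the coordinate hyperplanes $\{x_j=0\}$, but since all integrands are nonnegative this merely overcounts, and the bound $\int_{\rbb^\kappa}|x^{\alpha}|^2\,\D\mu_h\Le\sum_{\varepsilon}\int_{Q_{\varepsilon}}|x^{\alpha}|^2\,\D\mu_h$ remains valid. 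With both inclusions in hand, \eqref{aepsilon} follows.
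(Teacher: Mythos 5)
Your proposal is correct and follows essentially the same route as the paper: both reduce the domain identity to the finiteness of the scalar integrals $\int_{\rbb^\kappa} x^{2\alpha}\,\D\mu_h$ versus $\int_{\rbb^\kappa} (f_{\varepsilon}(x))^{2\alpha}\,\D\mu_h$ via \eqref{miaraepsilon} and measure transport, and both settle this by decomposing $\rbb^\kappa$ into the $2^\kappa$ closed orthants, on which $f_{\varepsilon}$ acts as the identity. The paper merely packages your two inclusions into the single sandwich $\int_{\rbb^\kappa}x^{2\alpha}\,\D\mu_h\Le\sum_{\varepsilon}\int_{\rbb^\kappa}x^{2\alpha}\,\is{E_{\mathbf{C}_{\varepsilon}}(\D x)h}{h}\Le 2^{\kappa}\int_{\rbb^\kappa}x^{2\alpha}\,\D\mu_h$, but the content is identical.
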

\begin{proof}
 Set $\rbb_-:=(-\infty,0]$. If $\varepsilon=(\varepsilon_1,\ldots,\varepsilon_{\kappa})\in\{-,+\}^{\kappa}$, then we define $\rbb_{\varepsilon}^{\kappa}:=\rbb_{\varepsilon_1}\times \ldots\times\rbb_{\varepsilon_{\kappa}}$.

Let $h\in\hh$. Applying equality \eqref{miaraepsilon}
 we get that
\begin{align*}
\int_{\rbb_{\varepsilon}^{\kappa}} x^{2\alpha} \is{E_\mathbf{C}(\D x)h}{h}
&=\int_{\rbb^{\kappa}}(f_{\varepsilon}(x))^{2\alpha} \is{E_\mathbf{C}(\D x)h}{h}\\
&=\int_{\rbb^{\kappa}} x^{2\alpha} \is{E_{\mathbf{C}}\circ f_{\varepsilon}^{-1}(\D x)h}{h}\\
&\overset{\eqref{miaraepsilon}}{=}\int_{\rbb^{\kappa}} x^{2\alpha} \is{E_{\mathbf{C}_{\varepsilon}}(\D x)h}{h}.
\end{align*}
Thus
\begin{align*}
\int_{\rbb^{\kappa}} x^{2\alpha} \is{E_\mathbf{C}(\D x)h}{h}&\Le\sum_{\varepsilon\in\{-,+\}^{\kappa}}\int_{\rbb_{\varepsilon}^{\kappa}} x^{2\alpha} \is{E_\mathbf{C}(\D x)h}{h}\\
=\sum_{\varepsilon\in\{-,+\}^{\kappa}}\int_{\rbb^{\kappa}} x^{2\alpha} \is{E_{\mathbf{C}_{\varepsilon}}(\D x)h}{h}
&\Le 2^\kappa\int_{\rbb^{\kappa}} x^{2\alpha} \is{E_\mathbf{C}(\D x)h}{h},
\end{align*}
which gives \eqref{aepsilon}, since $h\in\hh$ was chosen arbitrarily.
\end{proof}
The following theorem answers question raised at the beginning of this section.
\begin{thm}\label{indziedzin}
 Let $\A,\B\in\shk$  and
let $\alpha\in\zbb_+^{\kappa}$. If $\A\Lec\B$ and
\begin{equation}\label{xabh}
(\A_{\varepsilon})^{\alpha}\in\ogr{\hh},\quad \varepsilon\in\{-,+\}^{\kappa}\backslash\{\dagger\},
\end{equation}
then
 \begin{equation}\label{dxalfa}
\dz{\B^{\alpha}}\subseteq \dz{\A^{\alpha}}.
\end{equation}
\end{thm}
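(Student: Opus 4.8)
The plan is to reduce the statement to a comparison of positive tuples, where the integral inequalities of Theorem \ref{3.1} become available, and then to feed the function $\psi_{2\alpha}$ into them. First I would exploit the sign decomposition \eqref{aepsilon}, which writes $\dz{\mathbf{C}^\alpha}=\bigcap_{\varepsilon\in\{-,+\}^\kappa}\dz{(\mathbf{C}_\varepsilon)^\alpha}$. By hypothesis \eqref{xabh} we have $\dz{(\A_\varepsilon)^\alpha}=\hh$ for every $\varepsilon\neq\dagger$, so for $\A$ the intersection collapses to $\dz{\A^\alpha}=\dz{(\A_\dagger)^\alpha}$, while for $\B$ formula \eqref{aepsilon} gives the inclusion $\dz{\B^\alpha}\subseteq\dz{(\B_\dagger)^\alpha}$ for free. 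Hence it suffices to prove $\dz{(\B_\dagger)^\alpha}\subseteq\dz{(\A_\dagger)^\alpha}$. Here $\A_\dagger$ and $\B_\dagger$ are positive, since each coordinate is a positive part, and because the map $f_\dagger\colon\rbb^\kappa\to\rbb^\kappa$ is increasing and Borel, Corollary \ref{monot1} applied to $\A\Lec\B$ (together with the identity $f_\dagger(\mathbf{C})=\mathbf{C}_\dagger$) yields $\A_\dagger\Lec\B_\dagger$.

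Next, for a fixed $h\in\hh$ I would pass to the finite scalar measures $\mu_1(\sigma):=\is{E_{\A_\dagger}(\sigma)h}{h}$ and $\mu_2(\sigma):=\is{E_{\B_\dagger}(\sigma)h}{h}$ on $\borel(\rbb^\kappa)$. The decisive input is Lemma \ref{lematpodst} taken with $\iota=\kappa$, so that $\Le_\kappa$ is the usual order on $\rbb^\kappa$ and the side condition $A_j=B_j$ for $j>\kappa$ is vacuous: from $\A_\dagger\Lec\B_\dagger$ it delivers $E_{\B_\dagger}(\varOmega)\Le E_{\A_\dagger}(\varOmega)$ for every Borel lower set $\varOmega$, whence $\mu_2(\varOmega)\Le\mu_1(\varOmega)$ for all such $\varOmega$. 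Since moreover $\mu_1(\rbb^\kappa)=\|h\|^2=\mu_2(\rbb^\kappa)$, this is exactly condition (i) of Theorem \ref{3.1} in the equal-mass situation, so part (c) of that theorem makes condition (ii) available.

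I would then apply condition (ii) to the nonnegative increasing Borel function $\psi_{2\alpha}$ from \eqref{psibeta}. Because $\A_\dagger$ and $\B_\dagger$ are positive, their joint measures are carried by $[0,\infty)^\kappa$, where $\psi_{2\alpha}(x)=x^{2\alpha}$, so $\int_{\rbb^\kappa}\psi_{2\alpha}\,\D\mu_1=\|(\A_\dagger)^\alpha h\|^2$ and likewise $\int_{\rbb^\kappa}\psi_{2\alpha}\,\D\mu_2=\|(\B_\dagger)^\alpha h\|^2$, with the usual convention that a divergent integral signals that $h$ lies outside the relevant domain. Condition (ii) then gives $\|(\A_\dagger)^\alpha h\|^2\Le\|(\B_\dagger)^\alpha h\|^2$, so whenever $h\in\dz{(\B_\dagger)^\alpha}$ the right-hand side is finite, forcing $h\in\dz{(\A_\dagger)^\alpha}$. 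This yields the remaining inclusion and finishes the argument.

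The main obstacle to keep in mind is that one cannot apply Theorem \ref{3.1} directly to $\A$ and $\B$, because the function $\varphi_{2\alpha}(x)=x^{2\alpha}$ governing $\|\A^\alpha h\|^2$ fails to be increasing on all of $\rbb^\kappa$ (it decreases in the negative coordinate directions); monotonicity is recovered only on the positive orthant. This is precisely why the sign decomposition \eqref{aepsilon} and the boundedness hypothesis \eqref{xabh}, which annihilate every orthant except the fully positive one, are indispensable. A secondary point worth flagging is that the mere pointwise inequality $F_{\B_\dagger}\Le F_{\A_\dagger}$ would not suffice, as the Remark following Theorem \ref{3.1} shows; what makes the scheme work is the full-strength inequality $\mu_2(\varOmega)\Le\mu_1(\varOmega)$ on all lower sets, which Lemma \ref{lematpodst} provides.
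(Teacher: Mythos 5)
Your argument is correct, and its first half --- the reduction via the sign decomposition \eqref{aepsilon}, the collapse $\dz{\A^{\alpha}}=\dz{(\A_{\dagger})^{\alpha}}$ forced by \eqref{xabh}, the free inclusion $\dz{\B^{\alpha}}\subseteq\dz{(\B_{\dagger})^{\alpha}}$, and the derivation of $\A_{\dagger}\Lec\B_{\dagger}$ from Corollary \ref{monot1} --- coincides exactly with the paper's proof. Where you diverge is in the final step, the inclusion $\dz{(\B_{\dagger})^{\alpha}}\subseteq\dz{(\A_{\dagger})^{\alpha}}$. The paper gets there by first noting $\supp E_{\A_\dagger}\cup\supp E_{\B_\dagger}\subseteq[0,\infty)^{\kappa}$, applying Theorem \ref{spekwiel} to the increasing function $\psi_{\alpha}$ to obtain the one-dimensional relation $(\A_{\dagger})^{\alpha}\Lec(\B_{\dagger})^{\alpha}$ between positive selfadjoint operators, and then citing the domain inclusion of \cite[Proposition 7.1]{Paneta2012}. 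You instead stay at the level of the $\kappa$-dimensional scalar measures $\mu_j$, invoke Lemma \ref{lematpodst} with $\iota=\kappa$ (where the side condition is vacuous) to get the lower-set inequality $\mu_2(\varOmega)\Le\mu_1(\varOmega)$, and then run Theorem \ref{3.1}(c), (i)$\Rightarrow$(ii), on the nonnegative increasing Borel function $\psi_{2\alpha}$; the equal-mass hypothesis $\mu_1(\rbb^{\kappa})=\mu_2(\rbb^{\kappa})=\|h\|^2$ is correctly checked, and since (ii) concerns nonnegative functions the inequality $\int\psi_{2\alpha}\D\mu_1\Le\int\psi_{2\alpha}\D\mu_2$ holds in $[0,\infty]$, so finiteness of the right side does force $h\in\dz{(\A_{\dagger})^{\alpha}}$. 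Both routes are sound and both ultimately rest on Lemma \ref{lematpodst}; yours has the merit of being self-contained within the present paper (it puts Theorem \ref{3.1} to work, which the paper itself only uses in a remark), and it yields slightly more in passing, namely the norm inequality $\Vert(\A_{\dagger})^{\alpha}h\Vert\Le\Vert(\B_{\dagger})^{\alpha}h\Vert$, whereas the paper's route is shorter because it delegates the measure-theoretic work to the already-established one-dimensional result. Your closing remarks correctly identify why the restriction to the positive orthant is indispensable.
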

\begin{proof}
 First observe that the function $f_{\varepsilon}$ is increasing. Hence, by Corollary \ref{monot1} and assumptions, we get that $\A_{\varepsilon}=f_{\varepsilon}(\A)\Lec f_{\varepsilon}(\B)=\B_{\varepsilon}$.

Now we will prove the inclusion \eqref{dxalfa}. It follows from the condition \eqref{xabh} that $\dz{(\A_{\varepsilon})^{\alpha}}=\hh$ for every $\varepsilon\neq\dagger$. This and the equality \eqref{aepsilon} imply that
\begin{equation}\label{alfaplus}
\dz{\A^{\alpha}}=\bigcap_{\varepsilon\in\{-,+\}^{\kappa}}\dz{(\A_{\varepsilon})^{\alpha}}=
\dz{(\A_{\dagger})^{\alpha}}.
\end{equation}
   Note that $\supp
E_{\A_{\dagger}}\cup\supp E_{\B_{\dagger}}\subseteq[0,\infty)^{\kappa}$
by Corollary \ref{dodatnia} and the definition of $\A_{\dagger}$ and $\B_{\dagger}$.
Hence, by Theorem \ref{spekwiel}, we have
$(\A_{\dagger})^{\alpha}=\psi_\alpha(\A_\dagger)\Lec \psi_\alpha(\B_\dagger)=(\B_{\dagger})^{\alpha}$, since the function
$\psi_\alpha$ is increasing.
Moreover, the operators $(\A_{\dagger})^{\alpha}$ and
$(\B_{\dagger})^{\alpha}$ are positive. Applying  \cite[Proposition
7.1.]{Paneta2012} we
infer that $\dz{(\B_{\dagger})^{\alpha}}\subseteq
\dz{(\A_{\dagger})^{\alpha}}$. Eventually, using equalities
\eqref{aepsilon} and \eqref{alfaplus}, we deduce that
\begin{align*}
\dz{\B^{\alpha}}=\bigcap_{\varepsilon\in\{-,+\}^{\kappa}}\dz{(\B_{\varepsilon})^{\alpha}}\subseteq\dz{(\B_{\dagger})^{\alpha}}
\subseteq\dz{(\A_{\dagger})^{\alpha}}=\dz{\A^{\alpha}},
\end{align*}
which completes the proof.
\end{proof}
\begin{rem}
If $\kappa=1$ and $\alpha=\alpha_1=1$, then Theorem \ref{indziedzin} implies the second part of \cite[Proposition 6.3.]{Paneta2012}. Indeed, the condition $A_-\in\ogr{\hh}$ holds if and only if $A$ is bounded from below.
\end{rem}
We are now going to discuss in more detail the condition \eqref{xabh}. It may happen that the inclusion \eqref{dxalfa} may not hold, even if the condition \eqref{xabh} does not hold for only one $\varepsilon\in\{-,+\}^\kappa\backslash\{\dagger\}$. This will be illustrated by Example \ref{example}. The example will be preceded by two propositions describing spectral order for multiplication operators.
\begin{pro}\label{przyk}  Let $(X,\mathcal{A},\mu)$ be a measure space. Assume that $\mu$ is $\sigma$-finite.
 If $\varphi,\psi\colon X\to\rbb$ are $\mathcal{A}$-measurable, then the following conditions are equivalent:
  \begin{enumerate}[{\em(i)}]
  \item $M_{\varphi}\Lec M_{\psi}$,
   \item $\is{M_{\varphi}h}{h}\Le \is{M_{\psi}h}{h}$ for every $h\in\dz{M_{\varphi}}\cap\dz{M_{\psi}}$,
    \item $\varphi\Le \psi$ a.e. $[\mu]$.
  \end{enumerate}
\end{pro}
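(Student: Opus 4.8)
The plan is to establish the four implications (iii)$\Rightarrow$(i), (i)$\Rightarrow$(iii), (iii)$\Rightarrow$(ii) and (ii)$\Rightarrow$(iii), taking the pointwise condition (iii) as the pivot through which everything passes. The whole argument rests on the explicit form of the spectral measure of a multiplication operator recorded in \eqref{emphi}: for every $x\in\rbb$ one has $E_{M_\varphi}((-\infty,x]) = M_{\chi_{\varphi^{-1}((-\infty,x])}}$, so that $F_{M_\varphi}(x)$ is precisely multiplication by the characteristic function of the sublevel set $\{t\in X\colon \varphi(t)\Le x\}$, and likewise for $\psi$. Thus each inequality $F_{M_\psi}(x)\Le F_{M_\varphi}(x)$ between these projections amounts to the a.e.\ inclusion of sublevel sets $\{\psi\Le x\}\subseteq\{\varphi\Le x\}$.

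For (iii)$\Rightarrow$(i) I would simply note that $\varphi\Le\psi$ a.e.\ forces $\{\psi\Le x\}\subseteq\{\varphi\Le x\}$ a.e.\ for every $x$, whence $\chi_{\{\psi\Le x\}}\Le\chi_{\{\varphi\Le x\}}$ a.e.\ and therefore $F_{M_\psi}(x)\Le F_{M_\varphi}(x)$, i.e.\ $M_{\varphi}\Lec M_{\psi}$. The converse (i)$\Rightarrow$(iii) is where $\sigma$-finiteness enters. From $M_{\chi_{\{\psi\Le x\}}}\Le M_{\chi_{\{\varphi\Le x\}}}$ I would test against $\chi_S$ for sets $S$ of finite measure to obtain $\mu(\{\psi\Le x\}\cap S)\Le\mu(\{\varphi\Le x\}\cap S)$; choosing $S=(\{\psi\Le x\}\setminus\{\varphi\Le x\})\cap T_k$ with $\{T_k\}$ an exhaustion of $X$ by finite-measure sets shows $\{\psi\Le x\}\subseteq\{\varphi\Le x\}$ a.e., equivalently $\{\varphi>x\}\subseteq\{\psi>x\}$ a.e. Finally I would cover $\{\varphi>\psi\}$ by the countable union $\bigcup_{q\in\mathbb{Q}}(\{\varphi>q\}\setminus\{\psi>q\})$, inserting a rational between $\psi(t)$ and $\varphi(t)$, each member of which is null, to conclude $\varphi\Le\psi$ a.e.

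The remaining equivalence (iii)$\Leftrightarrow$(ii) is elementary analysis. For (iii)$\Rightarrow$(ii), given $h\in\dz{M_\varphi}\cap\dz{M_\psi}$ both $\varphi h$ and $\psi h$ lie in $L^2(X,\mu)$, so by Cauchy--Schwarz $\varphi|h|^2$ and $\psi|h|^2$ are integrable and $\is{M_\varphi h}{h}=\int_X\varphi|h|^2\,\D\mu\Le\int_X\psi|h|^2\,\D\mu=\is{M_\psi h}{h}$. For (ii)$\Rightarrow$(iii) I would argue by contradiction: if $\mu(\{\varphi>\psi\})>0$ then, writing $\{\varphi>\psi\}$ as the increasing union of the sets $\{\varphi>\psi\}\cap\{|\varphi|\Le n\}\cap\{|\psi|\Le n\}\cap T_k$, one of them, say $S$, has positive (and finite) measure; on $S$ both $\varphi$ and $\psi$ are bounded, so $\chi_S\in\dz{M_\varphi}\cap\dz{M_\psi}$, and (ii) yields $\int_S(\varphi-\psi)\,\D\mu\Le 0$, contradicting $\varphi-\psi>0$ on a set of positive measure.

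The only genuinely delicate point, shared by (i)$\Rightarrow$(iii) and (ii)$\Rightarrow$(iii), is the passage from an operator (respectively, quadratic-form) inequality to an almost-everywhere pointwise inequality; this is exactly where the hypothesis that $\mu$ is $\sigma$-finite is indispensable, since it supplies the finite-measure test functions $\chi_S$ needed to localize. Everything else is routine bookkeeping with sublevel sets together with a standard rational-approximation argument.
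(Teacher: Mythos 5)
Your proof is correct, and each step checks out: the identification of $F_{M_\varphi}(x)$ with multiplication by $\chi_{\{\varphi\Le x\}}$ via \eqref{emphi}, the localization by finite-measure test sets (which is indeed precisely where $\sigma$-finiteness enters), the truncation $\{|\varphi|\Le n\}\cap\{|\psi|\Le n\}$ guaranteeing that $\chi_S$ lies in $\dz{M_\varphi}\cap\dz{M_\psi}$, and the covering of $\{\varphi>\psi\}$ by the countably many null sets $\{\varphi>q\}\setminus\{\psi>q\}$, $q\in\mathbb{Q}$. The organization, however, differs from the paper's. The paper proves the single cycle (i)$\Rightarrow$(ii)$\Rightarrow$(iii)$\Rightarrow$(i), obtaining (i)$\Rightarrow$(ii) by citing \cite[Proposition 6.3.]{Paneta2012} (spectral order implies the form order on the common domain); its (ii)$\Rightarrow$(iii) and (iii)$\Rightarrow$(i) are essentially identical to yours --- a finite-measure subset of $\{-N<\psi<\varphi<N\}$ for the former, the a.e.\ inclusion of sublevel sets $\psi^{-1}((-\infty,t])\subseteq\{\psi<\varphi\}\cup\varphi^{-1}((-\infty,t])$ for the latter. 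What you add is a direct proof of (i)$\Rightarrow$(iii) that never passes through (ii): reading the projection inequality $F_{M_\psi}(x)\Le F_{M_\varphi}(x)$ against $\chi_S$ with $S=(\{\psi\Le x\}\setminus\{\varphi\Le x\})\cap T_k$ to conclude $\mu(\{\psi\Le x\}\setminus\{\varphi\Le x\})=0$ for each $x$, and then inserting rationals. This makes the proposition self-contained --- no appeal to the one-variable result of \cite{Paneta2012} is needed --- at the modest price of establishing four implications instead of three. Both routes are sound; the paper's is shorter on the page, yours is more elementary and independent of external input, and your remark that $\varphi|h|^2$ is integrable by Cauchy--Schwarz whenever $h\in\dz{M_\varphi}$ correctly justifies the form inequality in (iii)$\Rightarrow$(ii), a step the paper never needs to carry out explicitly.
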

\begin{proof}
 (i)$\Rightarrow$(ii) It follows from \cite[Proposition 6.3.]{Paneta2012}.

 (ii)$\Rightarrow$(iii) Suppose that for some $N\in\nbb$ there exists an $\mathcal{A}$-measurable set $\Delta\subset\{x\in X\colon -N<\psi(x)<\varphi(x)<N\}$  such that $0<\mu(\Delta)<\infty$. Then
 \begin{align*}\int_\Delta \psi\, \textrm{d}\mu<\int_\Delta \varphi\, \textrm{d}\mu.
 \end{align*}
 On the other hand, $h:=\chi_\Delta\in
\dz{M_{\varphi}}\cap\dz{M_{\psi}}$. Thus, by assumption,
\begin{align*}\int_\Delta \varphi\, \textrm{d}\mu\Le\int_\Delta \psi\, \textrm{d}\mu,
 \end{align*}
 which is a contradiction. Hence $\varphi\Le \psi$ a.e. $[\mu]$, since $\mu$ is $\sigma$-finite.

 (iii)$\Rightarrow$(i) Let $t\in\rbb$. Note that
 \begin{equation}\label{zawieranie} \psi^{-1}((-\infty,t])\subset \{x\in X\colon \psi(x)<\varphi(x)\}\cup\varphi^{-1}((-\infty,t]).\end{equation} By assumption, $\mu(\{x\in X\colon \psi(x)<\varphi(x)\})=0$. Thus, by \cite[Example 4.3.]{Schmudgen} and \eqref{zawieranie}, we obtain that
 \begin{align*}
  \is{F_{M_\psi}(t)h}{h}=&\int_X \chi_{\psi^{-1}((-\infty,t])}\vert h\vert^2\textrm{d}\mu\\ \overset{\eqref{zawieranie}}\Le& \int_X \chi_{\varphi^{-1}((-\infty,t])}\vert h\vert^2\textrm{d}\mu=\is{F_{M_\varphi}(t)h}{h},
 \end{align*}
 for every $h\in L^2(X,\mu)$. Hence $F_{M_\psi}(t)\Le F_{M_\varphi}(t)$ for every $t\in\rbb$, which completes the proof.
\end{proof}
For the sake of completeness, we include the following lemma.
\begin{lemma}\label{fmphi} Let $(X,\mathcal{A},\mu)$ be a measure space. Assume that
$\varphi_j\colon X\to\rbb$ is an $\mathcal{A}$-measurable function for every $j=1,\ldots,\kappa$. If $\varphi=(\varphi_1,\ldots,\varphi_{\kappa})\colon X\to \rbb^\kappa$ and $\mathbf{M}_\varphi:=(M_{\varphi_1},\ldots,M_{\varphi_\kappa})$, then
\begin{enumerate}[{\em(i)}]
\item $\mathbf{M}_\varphi\in\mathcal{S}_c(L^2(X,\mu),\kappa)$,
\item for every $\sigma\in\borel(\rbb^\kappa)$ and $h\in L^2(X,\mu)$
\begin{equation}\label{eam}E_{\mathbf{M}_\varphi}(\sigma)h=\chi_{\varphi^{-1}(\sigma)}h,
\end{equation}
\item
if $f\colon\rbb^\kappa\to\rbb$ is a Borel function, then
\begin{equation}\label{fam} f(\mathbf{M}_\varphi)=M_{ f\circ\varphi}.
\end{equation}
\end{enumerate}
\end{lemma}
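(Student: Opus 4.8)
The plan is to establish the three assertions in the order (i), (ii), (iii), since (i) guarantees the existence of the joint spectral measure $E_{\mathbf{M}_\varphi}$, (ii) identifies it explicitly, and (iii) is deduced from (ii) by a change of variables. First I would prove (i). Each $M_{\varphi_j}$ is selfadjoint by the prerequisites, so it remains to check that the spectral measures $E_{M_{\varphi_1}},\ldots,E_{M_{\varphi_\kappa}}$ commute. By \eqref{emphi}, the projections $E_{M_{\varphi_k}}(\Delta_1)$ and $E_{M_{\varphi_j}}(\Delta_2)$ are the multiplication operators by $\chi_{\varphi_k^{-1}(\Delta_1)}$ and $\chi_{\varphi_j^{-1}(\Delta_2)}$; their composition in either order is multiplication by the pointwise product $\chi_{\varphi_k^{-1}(\Delta_1)}\chi_{\varphi_j^{-1}(\Delta_2)}=\chi_{\varphi_k^{-1}(\Delta_1)\cap\varphi_j^{-1}(\Delta_2)}$, which is symmetric in $k$ and $j$. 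Hence the spectral measures commute, so $\mathbf{M}_\varphi\in\shk$ and the joint spectral measure $E_{\mathbf{M}_\varphi}$ exists.

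For (ii) I would introduce the candidate set function $E'(\sigma)h:=\chi_{\varphi^{-1}(\sigma)}h$ for $\sigma\in\borel(\rbb^\kappa)$ and $h\in L^2(X,\mu)$, which is well defined because $\varphi$ is $\mathcal{A}$-measurable. A routine verification shows that $E'$ is a spectral measure: $E'(\rbb^\kappa)=I$ since $\varphi^{-1}(\rbb^\kappa)=X$; the multiplicativity $E'(\sigma_1)E'(\sigma_2)=E'(\sigma_1\cap\sigma_2)$ follows from the corresponding identity for characteristic functions; and countable additivity in the strong operator topology follows from the fact that $\varphi^{-1}$ carries disjoint unions to disjoint unions. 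On a rectangle $\sigma=\sigma_1\times\cdots\times\sigma_\kappa$, combining \eqref{miaraEa} with \eqref{emphi} gives $E_{\mathbf{M}_\varphi}(\sigma)h=\chi_{\varphi_1^{-1}(\sigma_1)}\cdots\chi_{\varphi_\kappa^{-1}(\sigma_\kappa)}h=\chi_{\varphi^{-1}(\sigma)}h=E'(\sigma)h$. Since the rectangles form a $\pi$-system generating $\borel(\rbb^\kappa)$, and for each fixed $h$ the finite measures $\is{E'(\cdot)h}{h}$ and $\is{E_{\mathbf{M}_\varphi}(\cdot)h}{h}$ agree on this $\pi$-system, they agree on all of $\borel(\rbb^\kappa)$; polarization then yields $E'=E_{\mathbf{M}_\varphi}$, which is \eqref{eam}.

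Finally, for (iii) I would argue by measure transport. Fix $h\in L^2(X,\mu)$. By \eqref{eam}, the finite Borel measure $\sigma\mapsto\is{E_{\mathbf{M}_\varphi}(\sigma)h}{h}=\int_{\varphi^{-1}(\sigma)}|h|^2\,\D\mu$ is exactly the image of $|h|^2\,\D\mu$ under $\varphi$. Hence by the measure transport theorem \cite[p. 163, Theorem C.]{Halmos1950} one gets $\int_{\rbb^\kappa}|f|^2\,\is{E_{\mathbf{M}_\varphi}(\D x)h}{h}=\int_X|f\circ\varphi|^2|h|^2\,\D\mu$, so that $h\in\dz{f(\mathbf{M}_\varphi)}$ if and only if $h\in\dz{M_{f\circ\varphi}}$. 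Applying the same transport identity to the complex measure $h\bar g\,\D\mu$ (split into its four nonnegative parts) gives $\is{f(\mathbf{M}_\varphi)h}{g}=\int_X(f\circ\varphi)h\bar g\,\D\mu=\is{M_{f\circ\varphi}h}{g}$ for every $h$ in this common domain and every $g\in L^2(X,\mu)$, which proves \eqref{fam}. As an alternative route, \eqref{fam} holds for simple $f$ directly from \eqref{eam}, extends to bounded Borel $f$ by uniform approximation, and then to arbitrary Borel $f$ via the domain computation above.

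The only genuinely delicate points are the passage in (ii) from rectangles to arbitrary Borel sets --- where one must verify that the assignment $\sigma\mapsto M_{\chi_{\varphi^{-1}(\sigma)}}$ really is countably additive in the strong operator topology before invoking uniqueness on the generating $\pi$-system --- and, in (iii), the identification of $\is{E_{\mathbf{M}_\varphi}(\cdot)h}{h}$ with the pushforward of $|h|^2\,\D\mu$ under $\varphi$, so that the measure transport theorem applies verbatim. Everything else is bookkeeping.
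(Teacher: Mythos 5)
Your proposal is correct and follows essentially the same route as the paper: commutation via \eqref{emphi} for (i), agreement of the candidate spectral measure with $E_{\mathbf{M}_\varphi}$ on the generating family of rectangles for (ii), and the measure transport theorem for (iii). The only cosmetic difference is in (iii), where you identify the operators through their domains and sesquilinear forms, while the paper identifies the spectral measures $E_{f(\mathbf{M}_\varphi)}=E_{\mathbf{M}_\varphi}\circ f^{-1}=E_{M_{f\circ\varphi}}$ and invokes the one-to-one correspondence between spectral measures and selfadjoint operators; both are valid.
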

\begin{proof} (i)  Let $\sigma,\tau\in\borel(\rbb)$ and $h\in L^2(X,\mu)$.
Applying equation \eqref{emphi} we obtain that
\begin{align*}
 E_{M_{\varphi_i}}(\sigma)E_{M_{\varphi_j}}(\tau)h=\chi_{\varphi_i^{-1}(\sigma)}\chi_{\varphi_j^{-1}(\tau)}h
=E_{M_{\varphi_j}}(\tau)E_{M_{\varphi_i}}(\sigma)h,\quad i,j=1,\ldots,\kappa.
\end{align*}
 Hence $\mathbf{M}_\varphi\in\mathcal{S}_c(L^2(X,\mu),\kappa)$.

(ii) Let us define $E(\sigma)h:=\chi_{\varphi^{-1}(\sigma)}h$ for $\sigma\in\borel(\rbb^\kappa)$ and  $h\in L^2(X,\mu)$. Take $\sigma=\sigma_1\times\ldots\times\sigma_\kappa$, where $ \sigma_1,\ldots,\sigma_\kappa\in\borel(\rbb)$. Then, by \eqref{emphi} and \eqref{miaraEa},
\begin{align*}
E_{\mathbf{M}_\varphi}(\sigma)h&=E_{M_{\varphi_1}}(\sigma_1)\ldots E_{M_{\varphi_\kappa}}(\sigma_\kappa)h\\&=\chi_{\varphi_1^{-1}(\sigma_1)}\ldots \chi_{\varphi_\kappa^{-1}(\sigma_\kappa)}h=\chi_{\varphi^{-1}(\sigma)}h=E(\sigma)h.
\end{align*}
Hence $E_{\mathbf{M}_\varphi}(\sigma)=E(\sigma)$ for every $\sigma\in\borel(\rbb^\kappa)$, since $E$ is a spectral measure on $\borel(\rbb^\kappa)$ and $(\borel(\rbb))^\kappa$ generates $\sigma$-algebra $\borel(\rbb^\kappa)$.

(iii) Using \cite[Theorem 5.4.10.]{Bir-Sol}transport theorem, \eqref{eam}, and \eqref{emphi}, we obtain
\begin{align*}
E_{f(\mathbf{M}_\varphi)}=E_{\mathbf{M}_\varphi}\circ f^{-1}\overset{\eqref{eam}}{=}E\circ f^{-1}
=E_{M_{ f\circ\varphi}}.
\end{align*}
This implies \eqref{fam}, since there is a one to one correspondence between spectral measures and selfadjoint operators.
\end{proof}
Combining Proposition \ref{przyk}, Lemma \ref{fmphi}, and Theorem \ref{spekwiel}, we obtain the following proposition.
\begin{pro}\label{mnoz}
  Let $(X,\mathcal{A},\mu)$ be a $\sigma$-finite measure space. Assume that
$\varphi_j,\psi_j\colon X\to\rbb$ are $\mathcal{A}$-measurable functions for every $j=1,\ldots,\kappa$. Consider $\varphi=(\varphi_1,\ldots,\varphi_{\kappa})\colon X\to \rbb^\kappa$ and $\psi=(\psi_1,\ldots,\psi_{\kappa})\colon X\to \rbb^\kappa$. Then
\begin{equation*}
  \mathbf{M}_\varphi\Lec \mathbf{M}_\psi \text{ if and only if } \varphi\Le\psi \text{ a.e. } [\mu].
\end{equation*}
\end{pro}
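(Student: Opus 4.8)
The plan is to reduce this $\kappa$-dimensional equivalence to the scalar Proposition \ref{przyk}, handled coordinate by coordinate, by using Theorem \ref{spekwiel} to pass between the joint spectral order and the spectral orders of the individual coordinates. Concretely, I would set $\A=\mathbf{M}_\varphi=(M_{\varphi_1},\ldots,M_{\varphi_\kappa})$ and $\B=\mathbf{M}_\psi=(M_{\psi_1},\ldots,M_{\psi_\kappa})$ and work entirely in $\mathcal{S}_c(L^2(X,\mu),\kappa)$.

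First I would note that $\mathbf{M}_\varphi,\mathbf{M}_\psi\in\mathcal{S}_c(L^2(X,\mu),\kappa)$ by Lemma \ref{fmphi}(i), so the multidimensional spectral order is defined for this pair. Then Theorem \ref{spekwiel}, applied to $\A=\mathbf{M}_\varphi$ and $\B=\mathbf{M}_\psi$, yields the first key equivalence: $\mathbf{M}_\varphi\Lec\mathbf{M}_\psi$ holds if and only if $M_{\varphi_j}\Lec M_{\psi_j}$ for every $j=1,\ldots,\kappa$.

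Next, for each fixed $j$ I would invoke Proposition \ref{przyk} with the pair of scalar functions $\varphi_j,\psi_j$, which gives that $M_{\varphi_j}\Lec M_{\psi_j}$ is equivalent to $\varphi_j\Le\psi_j$ a.e.\ $[\mu]$; here the $\sigma$-finiteness of $\mu$ is exactly what Proposition \ref{przyk} requires, and it is part of the hypothesis. Chaining the two equivalences shows that $\mathbf{M}_\varphi\Lec\mathbf{M}_\psi$ holds if and only if $\varphi_j\Le\psi_j$ a.e.\ $[\mu]$ for every $j=1,\ldots,\kappa$.

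The only step left to spell out is that this conjunction of coordinatewise a.e.\ inequalities coincides with the single vector inequality $\varphi\Le\psi$ a.e.\ $[\mu]$, where $\varphi\Le\psi$ on $\rbb^\kappa$ means $\varphi_j\Le\psi_j$ for all $j$. This holds because each set $\{x\in X\colon \varphi_j(x)>\psi_j(x)\}$ is $\mathcal{A}$-measurable and $\mu$-null, and a finite union of null sets is null: off the null set $\bigcup_{j=1}^{\kappa}\{\varphi_j>\psi_j\}$ one has $\varphi\Le\psi$ pointwise, and the converse is immediate. I do not expect a genuine obstacle; the proposition is in essence a repackaging of Theorem \ref{spekwiel} and Proposition \ref{przyk}, with the finite-union-of-null-sets remark being the sole elementary point, and the main care is simply to confirm that all the cited results apply verbatim to the multiplication operators $M_{\varphi_j}$ and $M_{\psi_j}$.
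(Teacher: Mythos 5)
Your proof is correct and follows exactly the route the paper takes: the paper derives Proposition \ref{mnoz} by ``combining Proposition \ref{przyk}, Lemma \ref{fmphi}, and Theorem \ref{spekwiel}'' with no further detail, which is precisely your reduction to coordinatewise spectral order via Theorem \ref{spekwiel} followed by the scalar case from Proposition \ref{przyk}. Your explicit remark that the conjunction of coordinatewise a.e.\ inequalities equals the vector a.e.\ inequality (finite union of null sets) is the only step the paper leaves entirely implicit, and it is handled correctly.
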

\begin{exa}\label{example}
Let $\hh=L^2(\rbb^{\kappa},m_\kappa)$, where $m_\kappa$ denotes Lebesgue measure on $\borel(\rbb^\kappa)$, and let $\varepsilon=(\varepsilon_1,\ldots,\varepsilon_{\kappa})\in\{-,+\}^{\kappa}\backslash\{\dagger\}$.
Fix $j_0\in\{1,\ldots,\kappa\}$ such that $\varepsilon_{j_0}=-$. Define the functions $\varphi_j, \psi_j\colon \rbb^{\kappa}\to\rbb$ for $j\in\{1,\ldots,\kappa\}$ by
\begin{equation*}
\varphi_j(x)=\psi_j(x):=x_j\chi_{\rbb^\kappa_{\varepsilon}}(x), \text{ for } x=(x_1,\ldots,x_\kappa)\in\rbb^{\kappa} \text{ and }j\neq j_0,
\end{equation*}
\begin{equation*}
\varphi_{j_0}(x):=-(x_{j_0}^2+1)\chi_{\rbb_{\varepsilon_{j_0}}}(x_{j_0}),\quad x=(x_1,\ldots,x_\kappa)\in\rbb^\kappa,
\end{equation*}
and
\begin{equation*}
\psi_{j_0}(x):=x_{j_0}\chi_{\rbb_{\varepsilon_{j_0}}}(x_{j_0}),\quad x=(x_1,\ldots,x_\kappa)\in\rbb^\kappa.
\end{equation*}
It is evident that  $\varphi_j$ and $\psi_j$ are Borel function for every $j=1,\ldots,\kappa$.
 Let $\A:=M_{\varphi}$ and $\B:=M_{\psi}$, where $\varphi=(\varphi_1,\ldots,\varphi_{\kappa})$ and $\psi=(\psi_1,\ldots,\psi_{\kappa})$. According to
 Lemma \ref{fmphi},  $\A,\B\in\shk$.

We are going  to show that $\A$ and  $\B$ satisfy the following conditions:
\begin{enumerate}[(i)]
\item $\A\Lec\B$,
\item $(\A_{\delta})^{\alpha}\in\ogr{\hh}$ for every $\delta\in\{-,+\}^\kappa\backslash\{\varepsilon\}$ and for all $\alpha\in\nbb^{\kappa}$,
\item $(\A_{\varepsilon})^{\alpha}\notin\ogr{\hh}$ for every $\alpha\in\nbb^{\kappa}$,
\item $\dz{\B^{\alpha}}\not\subseteq \dz{\A^{\alpha}}$ for every $\alpha\in\nbb^{\kappa}$.
\end{enumerate}

(i) This is the consequence of Proposition \ref{mnoz}, since  $\varphi_j(x)\Le\psi_j(x)$ for every $x\in\rbb^{\kappa}$ and $j=1,\ldots,\kappa$.

(ii) We can find $i\in\{1,\ldots,\kappa\}$ such that $\delta_{i}\neq\varepsilon_{i}$, since $\delta=(\delta_1,\ldots,\delta_{\kappa})\neq\varepsilon$. Then
\begin{align}\label{fdelta}
f_{\delta_{i}}\circ\varphi_{i}(x)&=\left\{
                                    \begin{array}{ll}
                                     f_{\delta_{i}}(x_i\chi_{\rbb^\kappa_{\varepsilon}}(x)), & \hbox{ if }i\neq j_0, \\
                                     f_+(-(x_{j_0}^2+1)\chi_{\rbb_{\varepsilon_{j_0}}}(x_{j_0})), & \hbox{ if }i=j_0.
                                    \end{array}
                                  \right.
\\\notag &=0.
\end{align}
By definition \eqref{defceps}, Lemma \ref{fmphi} (iii), and equality \eqref{fdelta}, we obtain
\begin{align*}
(\A_{\delta})^{\alpha}
=((M_{f_{\delta_j}\circ\varphi_j})_{j=1}^{\kappa})^{\alpha}=M_{\prod_{j=1}^{\kappa}(f_{\delta_j}\circ\varphi_j)^{\alpha_j}}\overset{\alpha_{i}\neq0}{=}0.
\end{align*}

(iii) Similarly to (ii), we show that
\begin{align*} (\A_\varepsilon)^\alpha=M_{\prod_{j=1}^{\kappa}(f_{\varepsilon_j}\circ\varphi_j)^{\alpha_j}}\notin\ogr{\hh},
\end{align*}
since the function $\prod_{j=1}^{\kappa}(f_{\varepsilon_j}\circ\varphi_j)^{\alpha_j}(x)=(-1)^{\alpha_{j_0}}x^{\alpha-\alpha_{j_0}e_{j_0}}(x_{j_0}^2+1)^{\alpha_{j_0}}\chi_{\rbb^\kappa_\varepsilon}(x)$ is not essentially bounded with respect to Lebesgue measure.

(iv)  Using Lemma \ref{fmphi} (iii), we get that
\begin{align*}
\A^{\alpha}=M_{\prod_{j=1}^{\kappa}(\varphi_j)^{\alpha_j}} \textmd{ and } \B^{\alpha}=M_{\prod_{j=1}^{\kappa}(\psi_j)^{\alpha_j}}.
\end{align*}
Define a function $h\colon\rbb^{\kappa}\to\rbb$ by the following formula
\begin{equation*}
h(x)=(1+|x_{j_0}|)^{-(\alpha_{j_0}+1)}\chi_{\Delta}(x),\quad
 x=(x_1,\ldots,x_{\kappa})\in\rbb^{\kappa},
\end{equation*}
where $\Delta=[-1,1]\times\ldots\times[-1,1]\times\underbrace{(-\infty,0]}_{j_0}\times[-1,1]\times\ldots\times[-1,1]$.
 Note that $h\in\dz{\B^{\alpha}}\backslash\dz{\A^{\alpha}}$. Indeed, by the definition of $h$ and Fubini theorem,
 we get
\begin{align*}
\int_{\rbb^{\kappa}}\Big\vert \Big(\prod_{j=1}^{\kappa}\psi_j^{\alpha_j}\Big)h\Big\vert^2\, \mathrm{dm}_\kappa(x)&=\int_{\rbb_\varepsilon^\kappa\cap\Delta} x^{2\alpha}\cdot\vert h(x) \vert^2\, \mathrm{dm}_\kappa(x)\\
&\leqslant\int_{(-\infty,0]} x_{j_0}^{2\alpha_{j_0}}(1+\vert x_{j_0}\vert)^{-2(\alpha_{j_0}+1)}\, \mathrm{dm}(x_{j_0})<\infty.
\end{align*}
Thus $h\in\dz{M_{\Pi_{j=1}^{\kappa}(\psi_j)^{\alpha_j}}}$. At the same time
\begin{align*}
\int_{\rbb^{\kappa}}\Big\vert \Big(\prod_{j=1}^{\kappa}\varphi_j^{\alpha_j}\Big)h\Big\vert^2\, \mathrm{dm}_\kappa(x)&=\int_{\rbb_\varepsilon^\kappa\cap\Delta} x^{2\alpha-2\alpha_{j_0}e_{j_0}}(x_{j_0}^2+1)^{2\alpha_{j_0}}\cdot\vert h(x) \vert^2\, \mathrm{dm}_\kappa(x)\\
&=\frac{(2\alpha_{j_0}+1)}{\prod_{j=1}^\kappa (2\alpha_j+1)}\int_{(-\infty,0]}\frac{(x_{j_0}^2+1)^{2\alpha_{j_0}}}{(1+\vert x_{j_0}\vert)^{2(\alpha_{j_0}+1)}}\, \mathrm{dm}(x_{j_0})\\ &=\infty,
\end{align*}
since $\alpha_{j_0}\neq0$. This means that $h\notin\dz{M_{\Pi_{j=1}^{\kappa}(\varphi_j)^{\alpha_j}}}$.
\end{exa}

\section{Positive $\kappa$-tuples}\label{kolejny}
In this section we study the multidimensional spectral order in the case of positive $\kappa$-tuples of pairwise commuting selfadjoint operators. We begin by the following proposition, which generalizes \cite[Proposition 7.1.]{Paneta2012}.
\begin{pro}\label{1901}
  Let $\A,\B\in\shk$  and
let $\alpha\in[0,\infty)^{\kappa}$. Suppose that $\A$
and $\B$ are positive. If $\A\Lec \B$, then the
following conditions hold:
\begin{enumerate}[{\em(i)}]
\item $\A^\alpha\Lec \B^\alpha$,
 \item $\dz{\B^\alpha}\subseteq \dz{\A^\alpha}$,
\item $\Vert \A^\alpha h\Vert \Le \Vert \B^\alpha h\Vert$ for every $h\in\dz{\B^\alpha}$,
\item $\is{\A^\alpha h}{h}\Le \is{\B^\alpha h}{h}$ for every $h\in \dz{\B^\alpha}$,
\item $\A^\alpha\Le \B^\alpha$.
\end{enumerate}
Moreover $\dzn{\B}\subseteq\dzn{\A}$ and
$\bscr(\B)\subseteq\bscr(\A)$.
\end{pro}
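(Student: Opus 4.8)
The plan is to reduce everything, for each fixed $h\in\hh$, to a one–dimensional integral inequality supplied by Theorem \ref{3.1}. For $h\in\hh$ put
\[
\mu_h^{\A}(\sigma):=\is{E_{\A}(\sigma)h}{h},\qquad \mu_h^{\B}(\sigma):=\is{E_{\B}(\sigma)h}{h},\qquad \sigma\in\borel(\rbb^\kappa).
\]
These are finite Borel measures on $\rbb^\kappa$ with $\mu_h^{\A}(\rbb^\kappa)=\mu_h^{\B}(\rbb^\kappa)=\|h\|^2$, and both are carried by $[0,\infty)^\kappa$ since $\A,\B$ are positive. The function $\psi_\alpha$ of \eqref{psibeta} is nonnegative, Borel and increasing, and $\A^\alpha=\psi_\alpha(\A)$, $\B^\alpha=\psi_\alpha(\B)$; hence part (i) is immediate from Theorem \ref{nowe}(a).

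The heart of the argument is the inequality
\[
\int_{\rbb^\kappa}f\,\D\mu_h^{\A}\Le\int_{\rbb^\kappa}f\,\D\mu_h^{\B}
\]
for every increasing nonnegative Borel $f\colon\rbb^\kappa\to[0,\infty)$ and every $h\in\hh$. To get it I would apply Lemma \ref{lematpodst} with $\iota=\kappa$ (its hypothesis is then vacuous): from $\A\Lec\B$ it gives $E_{\B}(\varOmega)\Le E_{\A}(\varOmega)$, hence $\mu_h^{\B}(\varOmega)\Le\mu_h^{\A}(\varOmega)$, for every $\varOmega\in\borel(\rbb^\kappa)\cap L(\rbb^\kappa,\Le_\kappa)$. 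This is exactly condition (i) of Theorem \ref{3.1} for the pair $(\mu_1,\mu_2)=(\mu_h^{\A},\mu_h^{\B})$ with $\iota=\kappa$. As the two measures have equal total mass, Theorem \ref{3.1}(c) renders conditions (i)--(v) equivalent, and its condition (ii) is precisely the displayed inequality.

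Now (ii) and (iii) follow by taking $f=\psi_\alpha^2=\psi_{2\alpha}$: since $\|\A^\alpha h\|^2=\int_{\rbb^\kappa}\psi_{2\alpha}\,\D\mu_h^{\A}$ and $\|\B^\alpha h\|^2=\int_{\rbb^\kappa}\psi_{2\alpha}\,\D\mu_h^{\B}$, the inequality yields $\|\A^\alpha h\|\Le\|\B^\alpha h\|$; if $h\in\dz{\B^\alpha}$ the right side is finite, whence $h\in\dz{\A^\alpha}$. Taking instead $f=\psi_\alpha$ gives $\is{\A^\alpha h}{h}=\int_{\rbb^\kappa}\psi_\alpha\,\D\mu_h^{\A}\Le\int_{\rbb^\kappa}\psi_\alpha\,\D\mu_h^{\B}=\is{\B^\alpha h}{h}$ for $h\in\dz{\B^\alpha}\subseteq\dz{\A^\alpha}$, which is (iv). For (v) I would use that $\alpha/2\in[0,\infty)^\kappa$: by the functional calculus $(\A^\alpha)^{1/2}=\A^{\alpha/2}$ and $(\B^\alpha)^{1/2}=\B^{\alpha/2}$ (because $\psi_\alpha^{1/2}=\psi_{\alpha/2}$ on $[0,\infty)^\kappa$), so applying the already proved (ii) and (iii) with $\alpha$ replaced by $\alpha/2$ gives $\dz{\B^{\alpha/2}}\subseteq\dz{\A^{\alpha/2}}$ and $\|\A^{\alpha/2}h\|\Le\|\B^{\alpha/2}h\|$, which is by definition $\A^\alpha\Le\B^\alpha$. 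The ``moreover'' part is then bookkeeping: intersecting the inclusions in (ii) over $\alpha\in\zbb_+^\kappa$ gives $\dzn{\B}\subseteq\dzn{\A}$, and if $h\in\bscr_a(\B)$ then $h\in\dzn{\B}\subseteq\dzn{\A}$ and $\|\A^\alpha h\|\Le\|\B^\alpha h\|\Le ca^\alpha$ for all $\alpha\in\zbb_+^\kappa$ by (iii), so $h\in\bscr_a(\A)$ and $\bscr(\B)\subseteq\bscr(\A)$.

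I expect the main obstacle to be the second paragraph: converting the defining relation $\A\Lec\B$, which a priori only controls the rectangular cells $(-\infty,x]$, into control over all Borel lower sets (via Lemma \ref{lematpodst}), and matching the orientation of the inequalities so that Theorem \ref{3.1}(c) applies with equal total masses. Once the master integral inequality is established, conclusions (ii)--(v) and the moreover part are routine. Alternatively, since $\A^\alpha$ and $\B^\alpha$ are single positive selfadjoint operators with $\A^\alpha\Lec\B^\alpha$ by (i), one may deduce (ii)--(v) by applying the one–dimensional \cite[Proposition 7.1.]{Paneta2012} to the pair $(\A^\alpha,\B^\alpha)$.
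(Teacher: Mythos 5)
Your proof is correct, and for parts (ii)--(v) it takes a genuinely different route from the paper's. The paper disposes of (i) exactly as you do (apply Theorem \ref{spekwiel}/\ref{nowe} to the increasing function $\psi_\alpha$), but then handles (ii)--(v) in one line: since $\A^\alpha$ and $\B^\alpha$ are positive selfadjoint operators with $\A^\alpha\Lec\B^\alpha$ by (i), it invokes the one-dimensional \cite[Proposition 7.1.]{Paneta2012} with $s=1$ for the pair $(\A^\alpha,\B^\alpha)$ --- precisely the ``alternatively'' you mention in your last sentence. Your primary route instead builds the master inequality $\int f\,\D\mu_h^{\A}\Le\int f\,\D\mu_h^{\B}$ for increasing nonnegative Borel $f$ out of Lemma \ref{lematpodst} with $\iota=\kappa$ and Theorem \ref{3.1}(c) (the orientations and the equal total masses $\|h\|^2$ do match as you claim), and then specializes $f$ to $\psi_{2\alpha}$, $\psi_\alpha$ and $\psi_{2(\alpha/2)}$; the identification $(\A^\alpha)^{1/2}=\A^{\alpha/2}$ via $\psi_{1/2}\circ\psi_\alpha=\psi_{\alpha/2}$ is legitimate by the composition rule for the functional calculus. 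What the paper's reduction buys is brevity, at the cost of leaning on an external one-dimensional result; what yours buys is self-containment within the present paper's machinery and an explicit scalar-measure mechanism that also makes (iv) transparent. The ``moreover'' part is essentially identical in both (yours is if anything slightly more direct, reading $\bscr_a(\B)\subseteq\bscr_a(\A)$ straight off the definition rather than routing through Proposition \ref{wektogr}).
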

\begin{proof} (i) Apply Theorem \ref{spekwiel} for function $\psi_\alpha$ defined by equality \eqref{psibeta}, which is increasing.

(ii), (iii), (iv) and (v) follows from (i) and
\cite[Proposition 7.1.]{Paneta2012} applied for $s=1$ and
positive operators $\A^\alpha$ and $\B^\alpha$.

 To prove the "moreover" part let us note that
inclusion $\dzn{\B}\subseteq\dzn{\A}$ follows from
(ii) and Lemma \ref{dnieskonczony}. If
$h\in\bscr_{a}(\B)$ for some
$a\in(0,\infty)^{\kappa}$, then using (iii) and
Proposition \ref{wektogr} (b) we can find real number
$c>0$ such that
\begin{equation*}
 \Vert \A^\alpha h\Vert\Le \Vert \B^\alpha h\Vert\Le
ca^{\alpha},\: \textmd{for every}\:
\alpha\in\zbb_+^{\kappa},
\end{equation*} Applying Proposition \ref{wektogr} once more we get that  $h\in\bscr_{a}(\A)$, which gives us the second inclusion.
\end{proof}
The following two theorems Theorem \ref{2404} and Theorem \ref{2404nowe}, which are counterparts of \cite[Theorem 7.4.]{Paneta2012}, characterize multidimensional spectral order for positive $\kappa$-tuples in terms of appropriate inequalities for $\mathcal{C}^\infty$-vectors.
 In Theorem \ref{2404nowe} we assume additionally that one of the compared $\kappa$-tuples consists of injective operators.

\begin{thm}
  \label{2404}
 Let $\A=(A_1,\ldots,A_\kappa),\B=(B_1,\ldots,B_\kappa)\in\shk$ be positive.
Then the following conditions are equivalent\/{\em :}
   \begin{enumerate}[{\em(i)}]
   \item $\A \Lec \B$,
   \item $\dzn{\B} \subseteq \dzn{\A}$ and $L_{\A/\B}(h) \Le 1$ for every $h\in\dzn{\B}$,
   \item $\bscr(\B) \subseteq \dzn{\A}$ and $L_{\A/\B}(h) \Le 1$ for every $h\in\bscr(\B)$,
   \item $\bscr(\B) \subseteq \dzn{\A}$ and $L_{A_j/B_j}(h) \Le 1$ for every $h\in\bscr(\B)$ and $j=1,\ldots,\kappa$,
\item $\bscr(\B) \subseteq \bscr(\A)$ and $L_{\A/\B}(h) \Le 1$ for every $h\in\bscr(\B)$,
   \end{enumerate}
   where\footnote{We use the convention that $0/0=0$ and $a/0=\infty$ if $a>0$.}
\begin{equation*}L_{\A/\B}(h):=\displaystyle\limsup_{\vert \alpha\vert\to\infty}
\sqrt[\leftroot{1}\uproot{1} \vert \alpha\vert]{\Vert
\A^\alpha h \Vert / \Vert
\B^\alpha h \Vert}, \quad h \in
\dzn{\A} \cap\dzn{\B}.
\end{equation*}
\end{thm}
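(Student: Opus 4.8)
The plan is to run the cycle (i)$\Rightarrow$(ii)$\Rightarrow$(iii)$\Rightarrow$(iv)$\Rightarrow$(i) and attach the side loop (i)$\Rightarrow$(v)$\Rightarrow$(iii), which together yield all five equivalences. The forward implications are short consequences of Proposition \ref{1901} and the inclusions $\bscr(\B)\subseteq\dzn{\B}$ and $\bscr(\A)\subseteq\dzn{\A}$. For (i)$\Rightarrow$(ii), Proposition \ref{1901} gives $\dzn{\B}\subseteq\dzn{\A}$ and, via its part (iii), $\Vert\A^\alpha h\Vert\Le\Vert\B^\alpha h\Vert$ for every $h\in\dz{\B^\alpha}\supseteq\dzn{\B}$ and every $\alpha\in\zbb_+^\kappa$; with the convention $0/0=0$ each radicand is then $\Le 1$, so $L_{\A/\B}(h)\Le1$. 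Since $\bscr(\B)\subseteq\dzn{\B}$, (ii)$\Rightarrow$(iii) is immediate, and the loop (i)$\Rightarrow$(v)$\Rightarrow$(iii) is obtained the same way, now using the ``moreover'' inclusion $\bscr(\B)\subseteq\bscr(\A)\subseteq\dzn{\A}$ from Proposition \ref{1901}.

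For (iii)$\Rightarrow$(iv) I would exploit that the coordinate monomials satisfy $\A^{ne_j}=A_j^n$ and $\B^{ne_j}=B_j^n$, so that the sequence $\{ne_j\}_{n}$ is admissible in the $\limsup$ defining $L_{\A/\B}$. Hence, for $h\in\bscr(\B)$ (which lies in $\dzn{A_j}\cap\dzn{B_j}$ by Lemma \ref{dnieskonczony} together with $\bscr(\B)\subseteq\dzn{\A}$), one gets $L_{A_j/B_j}(h)\Le L_{\A/\B}(h)\Le1$, while the inclusion $\bscr(\B)\subseteq\dzn{\A}$ carries over verbatim.

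The crux is (iv)$\Rightarrow$(i). By Theorem \ref{spekwiel} it suffices to prove $A_j\Lec B_j$ for every $j$, so suppose this fails for some $j_0$. Then $F_{B_{j_0}}(b)\not\Le F_{A_{j_0}}(b)$ for some $b\Ge0$, and there is $h_0\in\ob{F_{B_{j_0}}(b)}$ with $h_0\notin\ob{F_{A_{j_0}}(b)}$; right-continuity of the resolution provides $\delta>0$ with $(I-F_{A_{j_0}}(b+\delta))h_0\neq0$. The obstruction is that $h_0$ is a priori only a bounded vector of $B_{j_0}$, not a \emph{joint} bounded vector of $\B$ (nothing forces $A_{j_0}$ and $B_k$ to commute for $k\neq j_0$), so (iv) cannot be applied to it. To repair this I would localize, setting $g_a:=\prod_{k\neq j_0}F_{B_k}(a_k)\,h_0$. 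Since the $B_k$ commute and $h_0\in\ob{F_{B_{j_0}}(b)}$, one has $g_a=F_{\B}(a')h_0\in\ob{F_{\B}(a')}=\bscr_{a'}(\B)$ with $a'_{j_0}=b$ and $a'_k=a_k$, by Proposition \ref{wektogr}; thus $g_a\in\bscr(\B)$, $g_a\in\dzn{\A}\subseteq\dzn{A_{j_0}}$ by (iv) and Lemma \ref{dnieskonczony}, and $\Vert B_{j_0}^n g_a\Vert\Le b^n\Vert g_a\Vert$. As the $a_k\to\infty$ $(k\neq j_0)$ one has $g_a\to h_0$, so $w_a:=(I-F_{A_{j_0}}(b+\delta))g_a\neq0$ for $a$ large enough, whence $\Vert A_{j_0}^n g_a\Vert\Ge(b+\delta)^n\Vert w_a\Vert$ by the spectral theorem. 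Consequently $L_{A_{j_0}/B_{j_0}}(g_a)\Ge(b+\delta)/b>1$ (with the convention $(b+\delta)/0=\infty$ when $b=0$), contradicting (iv); therefore $A_{j_0}\Lec B_{j_0}$ and (i) holds.

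I expect the localization step in (iv)$\Rightarrow$(i) to be the main obstacle: it is the only place where the failure of cross-commutativity between $\A$ and $\B$ must be confronted, and where one must simultaneously preserve the joint bounded-vector property of $g_a$ and the spectral mass of $h_0$ lying above $b$ for $A_{j_0}$. These two demands are reconciled through the product structure $F_{\A}(a)=F_{A_1}(a_1)\cdots F_{A_\kappa}(a_\kappa)$ and the strong convergence $\prod_{k\neq j_0}F_{B_k}(a_k)\to I$, which keeps $g_a$ inside a single box $\ob{F_\B(a')}$ while driving it to $h_0$. The residual bookkeeping, namely the $0/0$ and $a/0$ conventions and the degenerate case $b=0$ (where $\Vert B_{j_0}^n g_a\Vert=0$ forces $L_{A_{j_0}/B_{j_0}}(g_a)=\infty$ directly), is routine.
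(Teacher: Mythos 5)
Your proposal is correct and follows essentially the same route as the paper: the forward implications come from Proposition \ref{1901}, and the key implication (iv)$\Rightarrow$(i) reduces via Theorem \ref{spekwiel} to the coordinates and then localizes $h_0\in\ob{F_{B_{j_0}}(b)}$ by the joint spectral projections of $\B$ onto boxes, exactly as the paper does with $h_k=E_\B(\Delta_k)h$, using Proposition \ref{wektogr} to see these truncations lie in $\bscr(\B)$. The only difference is cosmetic: the paper argues directly that each truncation lands in $\ob{F_{A_j}(x)}$ (invoking auxiliary limit lemmas) and passes to the limit, whereas you argue by contradiction with the explicit lower bound $(b+\delta)/b>1$ on $L_{A_{j_0}/B_{j_0}}(g_a)$, which is a clean, self-contained variant of the same estimate.
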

\begin{proof} (i)$\Rightarrow$(ii) and (i)$\Rightarrow$(v) follow from
Proposition \ref{1901}.

Implications (ii)$\Rightarrow$(iii), (iii)$\Rightarrow$(iv),  and (v)$\Rightarrow$(iii) are obvious.

 (iv)$\Rightarrow$(i) According to Theorem \ref{spekwiel}, it suffices to show that
   \begin{equation}\label{inkluzja1}
   \ob{F_{B_j}(x)}\subseteq\ob{F_{A_j}(x)} \text{ for every }x\in[0,\infty) \text{ and }j=1,\ldots,\kappa,
   \end{equation} since $\A,\B$ are positive. Fix $j\in\{1,\ldots,\kappa\}$ and $x\geqslant 0$.
  Let $h\in\ob{F_{B_j}(x)}$. Define $h_k:=E_{\B}(\Delta_k)h$ for $k\in\nbb$, where \begin{align*}
  \Delta_k=[0,k]\times\ldots\times[0,k]\times\underbrace{[0,x]}\limits_j\times[0,k]\times\ldots\times[0,k]\subset\rbb^\kappa.
   \end{align*} By Proposition \ref{wektogr}, $h_k\in\bscr(\B)$ for every $k\in\nbb$. In particular, by the assumption, Lemma \ref{dnieskonczony}, and \cite[Proposition A.1.]{Paneta2012}, $h_k\in \bscr(B_j)\cap\dzn{A_j}$ and $\lim_{n\to\infty}\sqrt[n]{\|A_j^nh_k\|/\|B_j^nh_k\|}= L_{A_j/B_j}(h_k)\Le 1$ for every $k\in\nbb$. This implies that there is $N\in\nbb$ such that $\Vert A_j^nh_k\Vert=\Vert B_j^nh_k\Vert=0$ or $\Vert B_j^nh_k \Vert\neq0$ for $n>N$.
 Then, by \cite[Lemma 8.]{StoSza1997}, we have
  \begin{align*} \lim_{n\to\infty}\sqrt[n]{\|A_j^nh_k\|}&= \lim_{n\to\infty}\sqrt[n]{\|A_j^nh_k\|/\|B_j^nh_k\|}\cdot\lim_{n\to\infty}\sqrt[n]{\|B_j^nh_k\|}\\&\Le \lim_{n\to\infty}\sqrt[n]{\|B_j^nh_k\|}\Le x.
  \end{align*}
  Hence, by \cite[Lemma 8.]{StoSza1997} and \cite[Proposition 5.1.]{Paneta2012}, $h_k\in \bscr_x(A_j)=\ob{F_{A_j}(x)}$ for every $k\in\nbb$. Note that $h_k\to h$ as $k\to\infty$, since $h\in\ob{F_{B_j}(x)}$ and $\bigcup_{k\in\nbb}\Delta_k=[0,\infty)\times\ldots\times\underbrace{[0,x]}\limits_j\times\ldots\times[0,\infty)$. Thus $h\in\ob{F_{A_j}(x)}$ as  $\ob{F_{A_j}(x)}$ is closed, which proves \eqref{inkluzja1}.
\end{proof}

\begin{thm}\label{2404nowe}
 Let $\A=(A_1,\ldots,A_\kappa),\B\in\shk$ be positive.
 Assume that $\jd{A_j}=\{0\}$ for $j=1,\ldots,\kappa$.
 If $\Lambda\subseteq[0,\infty)^\kappa$ satisfies the
 condition \eqref{lambdasup}, then the following
 conditions are equivalent\/{\em :}
   \begin{enumerate}[{\em(i)}]
   \item $\A \Lec \B$,
\item $\dzn{\B} \subseteq \dzn{\A}$ and $L_{\A/\B}(h) \Le 1$ for every $h\in\dzn{\B}$,
   \item $\bscr(\B) \subseteq \dscr^{\Lambda}(\A)$ and $L_{\A/\B}^{\Lambda}(h) \Le 1$ for every $h\in\bscr(\B)$,
\item $\bscr(\B) \subseteq \bscr(\A)$ and $L_{\A/\B}^{\Lambda}(h) \Le 1$ for every $h\in\bscr(\B)$,
   \end{enumerate}
where
\begin{equation*}L_{\A/\B}^{\Lambda}(h):=\displaystyle\limsup_{\substack {\vert \alpha\vert\to\infty\\ \alpha\in\Lambda}}
\sqrt[\leftroot{1}\uproot{1} \vert \alpha\vert]{\Vert
\A^\alpha h \Vert / \Vert
\B^\alpha h \Vert},\; h \in
\dscr^{\Lambda}(\A)
\cap\dscr^{\Lambda}(\B).
\end{equation*}
\end{thm}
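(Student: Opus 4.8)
The plan is to prove the cycle $\text{(i)}\Rightarrow\text{(ii)}\Rightarrow\text{(iii)}\Rightarrow\text{(iv)}\Rightarrow\text{(i)}$. The implication $\text{(i)}\Rightarrow\text{(ii)}$ is immediate from Proposition \ref{1901}, exactly as in Theorem \ref{2404}: from $\A\Lec\B$ one gets $\dz{\B^\alpha}\subseteq\dz{\A^\alpha}$ and $\Vert\A^\alpha h\Vert\Le\Vert\B^\alpha h\Vert$ for $h\in\dz{\B^\alpha}$, whence $\dzn{\B}\subseteq\dzn{\A}$ and $L_{\A/\B}(h)\Le1$ on $\dzn{\B}$. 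For $\text{(ii)}\Rightarrow\text{(iii)}$ I would use only elementary containments: $\bscr(\B)\subseteq\dzn{\B}\subseteq\dzn{\A}$, and since $\A$ is positive, Lemma \ref{dnieskonczony} gives $\dzn{\A}=\bigcap_{\alpha\in[0,\infty)^\kappa}\dz{\A^\alpha}\subseteq\dscr^{\Lambda}(\A)$; moreover $L^{\Lambda}_{\A/\B}(h)\Le L_{\A/\B}(h)\Le1$ for $h\in\bscr(\B)$, because the $\limsup$ defining $L^{\Lambda}$ runs over the subfamily $\alpha\in\Lambda$ and so cannot exceed the full $\limsup$.

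The heart of the proof is the following observation, which I would isolate and then apply twice: if $h\in\bscr_{a}(\B)$ for some $a\in[0,\infty)^\kappa$, $h\in\dscr^{\Lambda}(\A)$, and $L^{\Lambda}_{\A/\B}(h)\Le1$, then $h\in\ob{F_{\A}(a)}$. To see this, Proposition \ref{wektogr} first supplies $c>0$ with $\Vert\B^\alpha h\Vert\Le ca^{\alpha}$ for all $\alpha\in[0,\infty)^\kappa$. Fixing $\varepsilon>0$, the inequality $L^{\Lambda}_{\A/\B}(h)\Le1$ yields $M>0$ such that $\Vert\A^\alpha h\Vert\Le(1+\varepsilon)^{|\alpha|}\Vert\B^\alpha h\Vert\Le c\big((1+\varepsilon)a\big)^{\alpha}$ for every $\alpha\in\Lambda$ with $|\alpha|>M$ (the convention $a/0=\infty$ forces $\Vert\A^\alpha h\Vert=0$ whenever $\Vert\B^\alpha h\Vert=0$ for such $\alpha$, so the bound is unconditional). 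Put $\Lambda':=\{\alpha\in\Lambda\colon|\alpha|>M\}$. I claim $\Lambda'$ still satisfies \eqref{lambdasup}: each discarded index obeys $\frac{\alpha_j}{1+|\alpha|}\Le\frac{M}{1+M}<1$, so the discarded part cannot contribute to a supremum equal to $1$, forcing $\sup_{\alpha\in\Lambda'}\frac{\alpha_j}{1+|\alpha|}=1$. Since $\jd{A_j}=\{0\}$ for every $j$, we have $\big(\bigcup_{j=1}^\kappa\jd{A_j}\big)^{\perp}=\hh$, so the ``moreover'' part of Proposition \ref{wektogr}, applied with $\Lambda'$ in place of $\Lambda$ and $(1+\varepsilon)a$ in place of $a$, gives $h\in\ob{F_{\A}((1+\varepsilon)a)}$. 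Letting $\varepsilon\downarrow0$ along $\varepsilon=1/n$ and using $\bigcap_{n\in\nbb}(-\infty,(1+1/n)a]=(-\infty,a]$ together with the continuity of $E_{\A}$ in the strong operator topology, I obtain $F_{\A}(a)h=\lim_{n}F_{\A}((1+1/n)a)h=h$, that is $h\in\ob{F_{\A}(a)}$.

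Granting this observation, $\text{(iii)}\Rightarrow\text{(iv)}$ is immediate: the $L^{\Lambda}$-clause of (iv) coincides verbatim with that of (iii), so only $\bscr(\B)\subseteq\bscr(\A)$ needs checking; any $h\in\bscr(\B)$ lies in some $\bscr_{a}(\A)$ with $a\in(0,\infty)^\kappa$, (iii) provides $h\in\dscr^{\Lambda}(\A)$ and $L^{\Lambda}_{\A/\B}(h)\Le1$, and the observation yields $h\in\ob{F_{\A}(a)}=\bscr_{a}(\A)\subseteq\bscr(\A)$. For $\text{(iv)}\Rightarrow\text{(i)}$ I would verify $F_{\B}(a)\Le F_{\A}(a)$ for every $a\in\rbb^\kappa$. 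If $a$ has a negative coordinate this is trivial, since $\supp E_{\B}\subseteq[0,\infty)^\kappa$ forces $F_{\B}(a)=0$. If $a\Ge0$, take $h\in\ob{F_{\B}(a)}$; by Proposition \ref{wektogr} $h\in\bscr_{a}(\B)\subseteq\bscr(\B)$, so (iv) gives $h\in\bscr(\A)\subseteq\dzn{\A}\subseteq\dscr^{\Lambda}(\A)$ and $L^{\Lambda}_{\A/\B}(h)\Le1$, and the observation yields $h\in\ob{F_{\A}(a)}$. Thus $\ob{F_{\B}(a)}\subseteq\ob{F_{\A}(a)}$, i.e. $\A\Lec\B$.

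The main obstacle is the gap between what the ``moreover'' part of Proposition \ref{wektogr} demands (a bound $\Vert\A^\alpha h\Vert\Le ca^{\alpha}$ for \emph{all} $\alpha\in\Lambda$) and what $L^{\Lambda}_{\A/\B}(h)\Le1$ actually delivers (such a bound only for large $|\alpha|$); the passage to $\Lambda'$, together with the verification that \eqref{lambdasup} survives the truncation, is precisely what bridges this gap, while the injectivity hypothesis $\jd{A_j}=\{0\}$ is exactly what lets me discharge the orthogonality condition in that proposition.
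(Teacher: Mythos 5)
Your proof is correct and its analytic core --- Proposition \ref{wektogr} combined with truncating $\Lambda$ to $\{\alpha\in\Lambda\colon|\alpha|>M\}$ (checking that \eqref{lambdasup} survives the truncation) and letting $\varepsilon\downarrow 0$ --- is exactly the argument the paper uses for (iii)$\Rightarrow$(i). The only difference is organizational: you arrange the implications as the single cycle (i)$\Rightarrow$(ii)$\Rightarrow$(iii)$\Rightarrow$(iv)$\Rightarrow$(i) and apply your isolated observation twice, whereas the paper proves (iii)$\Rightarrow$(i) once and disposes of (iv) via the separate chain (i)$\Rightarrow$(iv) (Proposition \ref{1901}) and (iv)$\Rightarrow$(iii) (immediate from $\bscr(\A)\subseteq\dscr^{\Lambda}(\A)$, i.e.\ \eqref{ajn1}).
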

\begin{proof}
(i)$\Rightarrow$(ii) This follows from
Proposition \ref{1901} and equality \eqref{ajn1}.

(ii)$\Rightarrow$(iii) It is obvious.

(iii)$\Rightarrow$(i) It suffices to
verify, that $F_{\B}(a)\Le F_{\A}(a)$ for every
$a\in[0,\infty)^\kappa$, since $\A$ and $\B$ are
positive. Fix $a\in [0,\infty)^{\kappa}$ and
$h\in\ob{F_{\B}(a)}$. Applying Proposition
\ref{wektogr} we get that
\begin{equation*}
h\in\bscr(\B) \textmd{ and }\Vert
\B^\alpha h\Vert\Le ca^\alpha, \quad \alpha\in\Lambda,
\end{equation*} for some real number $c>0$. By the assumptions
\begin{equation}\label{halfa4}
 h\in\dscr^\Lambda(\A)=\dscr^\Lambda(\A)\cap \Big(\bigcup_{j=1}^{\kappa}\jd{A_j}\Big)^\perp.
\end{equation}
 Fix $\varepsilon >0$. Since $L_{\A/\B}^{\Lambda}(h) \Le 1$, there exists a positive number $M\in\nbb$ such that
\begin{align*}
\sqrt[\leftroot{1}\uproot{1} \vert
\alpha\vert]{\Vert \A^\alpha h \Vert
/ \Vert \B^\alpha h \Vert}<1+\varepsilon \textmd{ for
every }
\alpha\in\Lambda_M:=\{\beta\in\Lambda\colon
\vert \beta\vert>M\}.
\end{align*} In particular,
   \begin{align*}
 \|\A^\alpha h\| \Le
 (1+\varepsilon)^{\vert \alpha\vert} \Vert
 \B^\alpha h \Vert
  \Le c(1+\varepsilon)^{\vert
\alpha\vert}a^{\alpha}=c[(1+\varepsilon)
a]^{\alpha}
   \end{align*}
for every $\alpha\in\Lambda_M$. Note that the set $\Lambda_M$ satifies the condition \eqref{lambdasup}. Hence, by
 \eqref{halfa4} and Proposition
\ref{wektogr}, we obtain that $h\in\ob
{F_\A((1+\varepsilon)a)}$. Since $\varepsilon>0$ was chosen arbitrarily, we infer that $h\in\ob
{F_\A(a)}$.

(i)$\Rightarrow$(iv) It follows from Proposition \ref{1901}.

 (iv)$\Rightarrow$(iii) Note that
$\bscr(\A)\subseteq\dscr^\Lambda(\A)$ which is the consequence of equality \eqref{ajn1}.
\end{proof}
The next proposition describes the multidimensional spectral order for positive elements in $\shk$ in terms of the classical order ''$\Le$''.
\begin{pro}\label{lambdanierowne2}
 Let $\A,\B\in\shk$ be positive.
  Suppose that there exists a family
$\{r_\alpha\}_{\alpha\in[0,\infty)^\kappa}\subseteq[1,\infty)$
such that
\begin{equation}\label{ralfa1}
\limsup_{\vert\alpha\vert\to\infty}\sqrt[\leftroot{1}\uproot{1} \vert
\alpha\vert]{r_\alpha}\Le 1.
\end{equation}
 Then the following conditions are equivalent\/{\em :}
\begin{enumerate}[{\em(i)}]
\item $\A\Lec \B$,
\item $\A^\alpha\Lec \B^\alpha$ for every $\alpha\in[0,\infty)^\kappa$,
\item $\A^\alpha\Le \B^\alpha$ for every $\alpha\in[0,\infty)^\kappa$,
\item $\A^\alpha\Le r_\alpha \B^\alpha$ for every $\alpha\in[0,\infty)^\kappa$.
\end{enumerate}
\end{pro}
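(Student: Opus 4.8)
The plan is to run the cycle of implications (i)$\Rightarrow$(ii)$\Rightarrow$(iii)$\Rightarrow$(iv)$\Rightarrow$(i), with only the last one carrying real content. The implication (i)$\Rightarrow$(ii) is immediate from Proposition \ref{1901}(i). For (ii)$\Rightarrow$(iii) I would fix $\alpha\in[0,\infty)^\kappa$ and regard $\A^\alpha,\B^\alpha$ as a pair of single positive selfadjoint operators with $\A^\alpha\Lec\B^\alpha$; the one-dimensional fact that the spectral order dominates the usual order for positive operators (\cite[Proposition 7.1.]{Paneta2012} with $s=1$, equivalently Proposition \ref{1901} in the case $\kappa=1$) then yields $\A^\alpha\Le\B^\alpha$. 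The step (iii)$\Rightarrow$(iv) is trivial: since $r_\alpha\Ge 1$ and $\B^\alpha$ is positive we have $\B^\alpha\Le r_\alpha\B^\alpha$, and composing with $\A^\alpha\Le\B^\alpha$ by transitivity of $\Le$ gives $\A^\alpha\Le r_\alpha\B^\alpha$.

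The heart of the argument is (iv)$\Rightarrow$(i). First I would unpack the hypothesis through square roots. For positive operators, $\A^\alpha\Le r_\alpha\B^\alpha$ means $\dz{(r_\alpha\B^\alpha)^{1/2}}\subseteq\dz{(\A^\alpha)^{1/2}}$ together with $\Vert(\A^\alpha)^{1/2}h\Vert\Le\Vert(r_\alpha\B^\alpha)^{1/2}h\Vert$ on that domain. Using the functional-calculus identities $(\A^\alpha)^{1/2}=\A^{\alpha/2}$, $(\B^\alpha)^{1/2}=\B^{\alpha/2}$, and $(r_\alpha\B^\alpha)^{1/2}=\sqrt{r_\alpha}\,\B^{\alpha/2}$, and then re-indexing by $\beta=\alpha/2$ (which runs over all of $[0,\infty)^\kappa$ as $\alpha$ does), I obtain
\begin{equation*}
\dz{\B^\beta}\subseteq\dz{\A^\beta}\quad\text{and}\quad \Vert\A^\beta h\Vert\Le\sqrt{r_{2\beta}}\,\Vert\B^\beta h\Vert,\ h\in\dz{\B^\beta},
\end{equation*}
for every $\beta\in[0,\infty)^\kappa$, and in particular for $\beta\in\zbb_+^\kappa$.

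With these inequalities in hand I would verify condition (iii) of Theorem \ref{2404}. Intersecting the domain inclusions over $\beta\in\zbb_+^\kappa$ gives $\dzn{\B}\subseteq\dzn{\A}$, hence $\bscr(\B)\subseteq\dzn{\B}\subseteq\dzn{\A}$. For $h\in\bscr(\B)$ the quotient bound $\Vert\A^\beta h\Vert/\Vert\B^\beta h\Vert\Le\sqrt{r_{2\beta}}$ holds (the convention $0/0=0$ absorbs the case $\Vert\B^\beta h\Vert=0$, since then $\Vert\A^\beta h\Vert=0$ as well), so
\begin{equation*}
L_{\A/\B}(h)=\limsup_{\vert\beta\vert\to\infty}\sqrt[\vert\beta\vert]{\Vert\A^\beta h\Vert/\Vert\B^\beta h\Vert}\Le\limsup_{\vert\beta\vert\to\infty}\big(r_{2\beta}\big)^{1/(2\vert\beta\vert)}\Le 1,
\end{equation*}
where the last inequality follows from \eqref{ralfa1}, a fortiori, since $2\beta$ ranges over a subset of $[0,\infty)^\kappa$ with $\vert 2\beta\vert\to\infty$. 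Thus Theorem \ref{2404}(iii) is satisfied, and its equivalence with (i) gives $\A\Lec\B$.

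I expect the main obstacle to be the bookkeeping inside (iv)$\Rightarrow$(i): carefully justifying the functional-calculus identity $(\A^\alpha)^{1/2}=\A^{\alpha/2}$ for unbounded positive $\kappa$-tuples and the attendant domain equalities, and controlling the $\limsup$ through the re-indexing $\alpha\mapsto 2\beta$ so that hypothesis \eqref{ralfa1} can be invoked. Everything else reduces to the already-established monotonicity (Proposition \ref{1901}) and the $\mathcal{C}^\infty$-vector characterization of the spectral order for positive tuples (Theorem \ref{2404}).
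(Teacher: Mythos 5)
Your proposal is correct and follows essentially the same route as the paper: the nontrivial content is (iv)$\Rightarrow$(i), which the paper also proves by taking $h\in\bscr(\B)$, passing to square roots to get $\dz{\B^{\alpha/2}}\subseteq\dz{\A^{\alpha/2}}$ and $\Vert\A^{\alpha/2}h\Vert^2\Le r_\alpha\Vert\B^{\alpha/2}h\Vert^2$, deducing $L_{\A/\B}(h)\Le 1$ from \eqref{ralfa1}, and invoking Theorem \ref{2404}. Your extra care with the re-indexing $\alpha\mapsto 2\beta$ and the $0/0$ convention only makes explicit what the paper leaves implicit.
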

\begin{proof} (i)$\Rightarrow$(ii) and (ii)$\Rightarrow$(iii) follow from Proposition \ref{1901}.

 (iii)$\Rightarrow$(iv) It is obvious.

 (iv)$\Rightarrow$(i) Let $h\in\bscr(\B)$. Then, by assumption and equality \eqref{ajn1},
$h\in\dz{\B^\frac{\alpha}{2}}\subseteq\dz{\A^\frac{\alpha}{2}}$
and
\begin{equation}\label{halfa3}
\Vert \A^\frac{\alpha}{2}h\Vert^2=\Vert
(\A^\alpha)^{\frac{1}{2}}h\Vert^2\Le r_\alpha\Vert
(\B^\alpha)^{\frac{1}{2}}h\Vert^2=r_\alpha\Vert
\B^\frac{\alpha}{2}h\Vert^2, \alpha\in[0,\infty)^\kappa.
\end{equation} In particular, $h\in\dzn{\A}$. Applying \eqref{halfa3} and \eqref{ralfa1} we obtain
\begin{align*} L_{\A/\B}(h)\Le\limsup_{\vert \alpha\vert\to\infty}\sqrt[\leftroot{1}\uproot{1} \vert \alpha\vert]{r_\alpha}\Le 1.
\end{align*}
Hence $\A\Lec \B$ by Theorem \ref{2404}.
\end{proof}
Repeating the proof of Proposition \ref{lambdanierowne2} and applying Theorem \ref{2404nowe} instead of Theorem \ref{2404}, we have the following proposition.
\begin{pro}\label{lambdanierowne1}
 Let $\A,\B\in\shk$ be positive and let $\Lambda\subseteq[0,\infty)^\kappa$ such that $\jd{A_j}=\{0\}$ for
$j=1,\ldots,\kappa$,
Suppose that the set
$\Lambda\subseteq[0,\infty)^\kappa$ satisfies condition \eqref{lambdasup},
and there exists a family
$\{r_\alpha\}_{\alpha\in\Lambda}\subseteq[1,\infty)$
satisfying the following condition
\begin{equation*}
\limsup_{\substack {\vert \alpha\vert\to\infty\\
\alpha\in\Lambda}}\sqrt[\leftroot{1}\uproot{1} \vert
\alpha\vert]{r_\alpha}\Le 1.
\end{equation*}
 Then $\A\Lec \B$ if and only if $\A^\alpha\Le r_\alpha \B^\alpha$ for every $\alpha\in\Lambda$.
\end{pro}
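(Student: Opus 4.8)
The plan is to mirror the proof of Proposition \ref{lambdanierowne2}, with Theorem \ref{2404nowe} playing the role of Theorem \ref{2404}. The equivalence splits into an easy implication and a harder one. For the forward direction, suppose $\A\Lec\B$. Since $\A$ and $\B$ are positive, Proposition \ref{1901}(v) gives $\A^\alpha\Le\B^\alpha$ for every $\alpha\in[0,\infty)^\kappa$, in particular for $\alpha\in\Lambda$; as $r_\alpha\Ge 1$ and $\B^\alpha$ is positive, we have $\B^\alpha\Le r_\alpha\B^\alpha$, whence $\A^\alpha\Le r_\alpha\B^\alpha$. This settles the "only if" part and needs no new idea.

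For the converse, assume $\A^\alpha\Le r_\alpha\B^\alpha$ for every $\alpha\in\Lambda$. Unwinding the definition of ``$\Le$'' for positive operators (recall $(\A^\alpha)^{1/2}=\A^{\alpha/2}$ and $(r_\alpha\B^\alpha)^{1/2}=\sqrt{r_\alpha}\,\B^{\alpha/2}$), this inequality means $\dz{\B^{\alpha/2}}\subseteq\dz{\A^{\alpha/2}}$ together with $\Vert\A^{\alpha/2}h\Vert^2\Le r_\alpha\Vert\B^{\alpha/2}h\Vert^2$ for $h\in\dz{\B^{\alpha/2}}$. I would then fix $h\in\bscr(\B)$; by \eqref{ajn1} we have $h\in\dzn{\B}=\bigcap_{\gamma\in[0,\infty)^\kappa}\dz{\B^\gamma}$, so in particular $h\in\dz{\B^{\alpha/2}}$ for every $\alpha\in\Lambda$, and the displayed estimate gives both $h\in\dz{\A^{\alpha/2}}$ and $\Vert\A^{\alpha/2}h\Vert\Le\sqrt{r_\alpha}\,\Vert\B^{\alpha/2}h\Vert$.

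The one point requiring care is that these estimates control the exponents $\alpha/2$ rather than $\alpha$, so I would apply Theorem \ref{2404nowe} not to $\Lambda$ but to the halved set $\Lambda':=\tfrac12\Lambda=\{\alpha/2\colon\alpha\in\Lambda\}$. First one checks that $\Lambda'$ again satisfies \eqref{lambdasup}: since $\frac{(\alpha/2)_j}{1+|\alpha/2|}=\frac{\alpha_j}{2+|\alpha|}$, and any sequence realizing the supremum $1$ for $\Lambda$ necessarily has $|\alpha|\to\infty$ with $\alpha_j/|\alpha|\to1$, the same sequence forces $\frac{\alpha_j}{2+|\alpha|}\to1$, so the supremum for $\Lambda'$ is also $1$. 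With $\Lambda'$ in hand, the previous paragraph shows $\bscr(\B)\subseteq\dscr^{\Lambda'}(\A)$, and writing $\beta=\alpha/2\in\Lambda'$ we obtain
\[
L^{\Lambda'}_{\A/\B}(h)=\limsup_{\substack{|\beta|\to\infty\\ \beta\in\Lambda'}}\sqrt[|\beta|]{\frac{\Vert\A^\beta h\Vert}{\Vert\B^\beta h\Vert}}\Le\limsup_{\substack{|\beta|\to\infty\\ \beta\in\Lambda'}}\sqrt[2|\beta|]{r_{2\beta}}=\limsup_{\substack{|\alpha|\to\infty\\ \alpha\in\Lambda}}\sqrt[|\alpha|]{r_\alpha}\Le1,
\]
the last step being the standing hypothesis on $\{r_\alpha\}_{\alpha\in\Lambda}$. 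Theorem \ref{2404nowe}, applied with the index set $\Lambda'$ and using that $\jd{A_j}=\{0\}$ for all $j$ and that $\A,\B$ are positive, then yields $\A\Lec\B$.

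The main obstacle is therefore not conceptual but the index rescaling by a factor of $2$ just described, including the verification that \eqref{lambdasup} is inherited by $\Lambda'$; once this is in place, everything else is a verbatim repetition of the argument for Proposition \ref{lambdanierowne2}.
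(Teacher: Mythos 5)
Your proof is correct and follows essentially the same route as the paper, which simply states that one repeats the argument of Proposition \ref{lambdanierowne2} with Theorem \ref{2404nowe} in place of Theorem \ref{2404}. You additionally make explicit the rescaling to $\Lambda'=\tfrac12\Lambda$ and the verification that \eqref{lambdasup} is inherited by $\Lambda'$ --- a detail the paper leaves implicit --- and that verification is carried out correctly.
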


 Let $\A, \B\in\shk$ be positive. Let us consider the set
\begin{equation*}\Lambda(\A,\B):=\{\alpha\in[0,\infty)^{\kappa}\colon \A^\alpha\Le \B^\alpha\}.
\end{equation*} Proposition \ref{1901} tells us, that $\Lambda(\A,\B)=[0,\infty)^\kappa$ if $\A\Lec\B$. Taking into account \cite[Corollary 7.6.]{Paneta2012}, we are going to determine what are sufficient and necessary condition on the set $\Lambda(\A,\B)$ for the relation $\A\Lec\B$ to hold.
\begin{pro}
 Let $\A=(A_1,\ldots,A_{\kappa}), \B=(B_1,\ldots,B_{\kappa})\in\shk$ be positive. Cosider the following conditions:
\begin{enumerate}[{\em (i)}]
\item $\A\Lec\B$,
\item   the set $\Lambda(\A,\B)\cap\{se_j\colon s\in[0,\infty)\}$
is unbounded for every $j=1,\ldots,\kappa$.
\item  \begin{equation}\label{lambdasup1}
 \sup_{\alpha\in\Lambda(\A,\B)}\frac{\alpha_j}{1+|\alpha|}=1, \quad j=1,\ldots,\kappa.
 \end{equation}
\end{enumerate}
Then conditions {\em(i)} and {\em(ii)} are equivalent.
Moreover, if $\jd{A_j}=\{0\}$ for $j=1,\ldots,\kappa$, then all the conditions {\em(i)-(iii)} are equivalent.
\end{pro}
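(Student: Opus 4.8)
The plan is to prove the equivalence (i)$\Leftrightarrow$(ii) without any injectivity hypothesis, and then to close the cycle (i)$\Rightarrow$(ii)$\Rightarrow$(iii)$\Rightarrow$(i) under the assumption $\jd{A_j}=\{0\}$, with (iii)$\Rightarrow$(i) being the only link that actually consumes injectivity. First I would dispose of (i)$\Rightarrow$(ii): by Proposition \ref{1901} the relation $\A\Lec\B$ forces $\A^\alpha\Le\B^\alpha$ for \emph{every} $\alpha\in[0,\infty)^\kappa$, so that $\Lambda(\A,\B)=[0,\infty)^\kappa$. In particular each ray $\{se_j\colon s\in[0,\infty)\}$ lies entirely in $\Lambda(\A,\B)$, whence the intersection in (ii) is unbounded.

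For (ii)$\Rightarrow$(i) I would reduce to coordinates. Since $\A$ is positive, $\supp E_{\A}\subseteq[0,\infty)^\kappa$, and a short computation with the measure transport theorem together with \eqref{eapij} gives $\A^{se_j}=\psi_{se_j}(\A)=A_j^s$ for every $s\in[0,\infty)$ and every $j$. Consequently condition (ii) says precisely that $\{s\in[0,\infty)\colon A_j^s\Le B_j^s\}$ is unbounded for each $j$. The one-variable result \cite[Corollary 7.6.]{Paneta2012} then yields $A_j\Lec B_j$ for every $j$, and Theorem \ref{spekwiel} upgrades this to $\A\Lec\B$.

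It remains to treat the three-way equivalence when $\jd{A_j}=\{0\}$ for all $j$. The implication (ii)$\Rightarrow$(iii) is immediate and needs no injectivity: choosing $s_n e_j\in\Lambda(\A,\B)$ with $s_n\to\infty$ gives $\tfrac{s_n}{1+s_n}\to 1$, while $\tfrac{\alpha_j}{1+|\alpha|}\Le 1$ holds for every $\alpha$, so the supremum in \eqref{lambdasup1} equals $1$. The crucial observation for (iii)$\Rightarrow$(i) is that condition \eqref{lambdasup1} is \emph{verbatim} the hypothesis \eqref{lambdasup} applied to the set $\Lambda=\Lambda(\A,\B)$, and that on this set the inequality $\A^\alpha\Le\B^\alpha$ holds by the very definition of $\Lambda(\A,\B)$. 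I would therefore invoke Proposition \ref{lambdanierowne1} with the constant family $r_\alpha\equiv 1$, which trivially satisfies $\limsup_{|\alpha|\to\infty}\sqrt[|\alpha|]{r_\alpha}\Le 1$; its conclusion that $\A\Lec\B$ is equivalent to $\A^\alpha\Le r_\alpha\B^\alpha$ for $\alpha\in\Lambda$ then delivers (i) at once. This is the sole place where $\jd{A_j}=\{0\}$ enters, as it is built into the hypotheses of Proposition \ref{lambdanierowne1} (and ultimately into the ``moreover'' part of Proposition \ref{wektogr}).

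The argument is largely a bookkeeping assembly of results already established, so the only genuine technical point, and the step I would carry out most carefully, is the identity $\A^{se_j}=A_j^s$: one must check that spectral-integrating $\psi_{se_j}$ against the joint measure $E_{\A}$ reproduces the ordinary $s$-th power of the $j$-th coordinate operator. Everything else reduces to matching condition (iii) with \eqref{lambdasup} and citing Propositions \ref{1901} and \ref{lambdanierowne1}, Theorem \ref{spekwiel}, and \cite[Corollary 7.6.]{Paneta2012}.
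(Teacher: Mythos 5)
Your argument is correct and follows the paper's own proof essentially step for step: (i)$\Rightarrow$(ii) via Proposition \ref{1901}, (ii)$\Rightarrow$(i) via the identity $\A^{se_j}=A_j^s$ on the positive support together with \cite[Corollary 7.6.]{Paneta2012} and Theorem \ref{spekwiel}, and the injective case via Proposition \ref{lambdanierowne1} with $\Lambda=\Lambda(\A,\B)$ and $r_\alpha\equiv 1$. The only cosmetic difference is that you close the cycle through (ii)$\Rightarrow$(iii)$\Rightarrow$(i) whereas the paper reads both directions of (i)$\Leftrightarrow$(iii) off Proposition \ref{lambdanierowne1}; this changes nothing of substance.
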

\begin{proof}
(i)$\Rightarrow$(ii) It is a consequence of the
Proposition \ref{1901}.

(ii)$\Rightarrow$(i) Fix $j\in\{1,\ldots,\kappa\}$. By
\cite[Lemma 6.5.2]{Bir-Sol} we get that
$C_j^s=\mathbf{C}^{se_j}$ for every positive $\mathbf{C}\in\shk$
and for every $s\in[0,\infty)$, since
$\psi_{se_j}(x)=\psi_s\circ\pi_j(x)$ for every
$x\in[0,\infty)^\kappa$. Thus the set $\{s\in[0,\infty)\colon A_j^s\Le B_j^s\}$ is infinite for every $j=1,\ldots,\kappa$. From \cite[Corollary 7.6]{Paneta2012} we infer that
 $A_j\Lec B_j$ for every $j=1,\ldots,\kappa$.
Hence $\A\Lec\B$ by Theorem \ref{spekwiel}.

Now assume that $\jd{A_j}=\{0\}$ for $j=1,\ldots,\kappa$. The equivalence (i)$\Leftrightarrow$(iii) follows from Proposition
\ref{lambdanierowne1} applied for
$\Lambda=\Lambda(\A,\B)$ and
$\{r_\alpha\}_{\alpha\in\Lambda}$, where $r_\alpha=1$
for every $\alpha\in\Lambda$.
\end{proof}
Note that in the case $\kappa=1$ conditions (ii) and (iii) coincide.
In particular, injectivity of $A_1$ can be omitted by \cite[Corollary 7.6]{Paneta2012}. On the other hand,  the condition \eqref{lambdasup1} may be to weak to ensure inequality $\A\Lec\B$ if $\kappa>1$. This can happen if some of the operators $A_1,\ldots,A_\kappa$ are not injective.
\begin{exa} Let $\hh=\cbb^2$ be a Hilbert space with standard orthonormal basis $\{(1,0),(0,1)\}$ and let $\theta\in[1,\infty)$. Consider the $2\times2$ matrices $A_1$, $A_2$, $B_{\theta,1}$, $B_{\theta,2}$ given by
\begin{equation*}
A_1=\left[
    \begin{array}{cc}
      0 & 0 \\
      0 & 0 \\
    \end{array}
  \right], \quad
  A_2=\left[
    \begin{array}{cc}
      2 & 1 \\
      1 & 2 \\
    \end{array}
  \right], \quad
    B_{\theta,1}=\left[
    \begin{array}{cc}
      1 & 0 \\
      0 & 1 \\
    \end{array}
  \right], \text{ and }
   B_{\theta,2}=\left[
    \begin{array}{cc}
      3 & 1 \\
      1 & 1+\theta \\
    \end{array}
  \right].
\end{equation*}
Let us define $\A:=(A_1,A_2)$ and $\B_\theta:=(B_{\theta,1},B_{\theta,2})$. It is evident that $\A, \B_\theta\in\B_{cs}(\hh,2)$ are positive for $\theta\in[1,\infty)$, $\jd{A_1}=\hh$, and $\jd{A_2}=\{0\}$.  We are going to show, that $\A$ and $\B_\theta$ satisfy the following conditions:
\begin{enumerate}[(i)]
\item for every $k\in\nbb$ there exixts $\theta_k\in(2,\infty)$ such that
\begin{equation*}
\Big((0,\infty)\times[0,\infty)\Big)\cup\Big(\{0\}\times[0,k]\Big)\subseteq\Lambda(\A,\B_\theta) \text{ for all } \theta \in [\theta_k, \infty),
\end{equation*}
\item $\A\Lec \B_\theta$ if and only if $\theta=2$.
\end{enumerate}
In particular, if $\theta\neq 2$, then $\A\not\Lec \B_\theta$
although $\A$ and $\B_\theta$ satisfy
condition \eqref{lambdasup1}.\end{exa}
\begin{proof}
(i) First, if
$(s,t)\in(0,\infty)\times[0,\infty)$, then
$A_1^sA_2^t=0$ and
$B_{\theta,1}^sB_{\theta,2}^t=B_{\theta,2}^t\Ge 0$. In
particular
$(s,t)\in\Lambda(\A,\B_\theta)$.
On the other hand, by \cite[Proposition 8.3.]{Paneta2012}
there exists $\theta_k\in(2,\infty)$ such that
 \begin{equation*} \left[
                     \begin{array}{cc}
                       1 & 1 \\
                       1 & 1 \\
                     \end{array}
                   \right]^n\Le \left[
                                  \begin{array}{cc}
                                    2 & 1 \\
                                    1 & \theta \\
                                  \end{array}
                                \right]^n,\quad n=0,\ldots,k, \text{ and }\theta\in[\theta_k,\infty).\end{equation*}
  This implies that
  \begin{align*}\left[
    \begin{array}{cc}
      2 & 1 \\
      1 & 2 \\
    \end{array}
  \right]^k&=\left(\left[
    \begin{array}{cc}
      1 & 1 \\
      1 & 1 \\
    \end{array}
  \right]+\left[
    \begin{array}{cc}
      1 & 0 \\
      0 & 1 \\
    \end{array}
  \right]\right)^k=\sum_{j=0}^k\left(\begin{array}{c}k\\ j
\end{array}\right)\left[
    \begin{array}{cc}
      1 & 1 \\
      1 & 1 \\
    \end{array}
  \right]^j\\
  &\Le \sum_{j=0}^k\left(\begin{array}{c}k\\ j
\end{array}\right)\left[
                                  \begin{array}{cc}
                                    2 & 1 \\
                                    1 & \theta \\
                                  \end{array}
                                \right]^j= \left[
    \begin{array}{cc}
      3 & 1 \\
      1 & 1+\theta \\
    \end{array}
  \right]^k.\end{align*}
 By L\"{o}wner-Heinz inequality we get that
$A_2^s=(A_2^k)^{\frac{s}{k}}\Le
(B_{\theta,2}^k)^{\frac{s}{k}}=B_{\theta,2}^s$ for
every $s\in[0,k]$. Thus
  \begin{equation*}
  \{0\}\times[0,k]\subseteq\Lambda(\A,\B_\theta).
  \end{equation*}

 (ii) We deduce from Theorem \ref{spekwiel} and Corollary
\ref{summno} that the inequality $\A\Lec \B_\theta$
holds if and only if $A=A_2-I_\hh\Lec
B_{\theta,2}-I_\hh=B_{\theta}$, since $A_1\Lec
B_{\theta,1}$. The application of \cite[Lemma 8.2.]{Paneta2012}
completes the proof.\end{proof}

   \appendix
\section{Resolutions of the identity on $\rbb^\kappa$}\label{app1}
In this section, we recall the definition of
an abstract multivariable resolution of the identity and we
outline an idea how to construct
spectral measures on $\rbb^\kappa$ by using multivariable resolutions of the identity (see \cite{ash} for the case of scalar
measures).

For $c,d\in\rbb$ and $j\in\{1,\ldots,\kappa\}$, we
define the difference operator
$\bigtriangleup_{d,c}^{(j)}$ acting on operator-valued
functions $F\colon\rbb^\kappa\to\ogr{\hh}$ by
   \begin{align*} (\bigtriangleup_{d,c}^{(j)}
F)(x_1, \ldots, x_\kappa) & = F(x_1,\ldots,x_{j-1},d,
x_{j+1},\ldots,x_\kappa)
   \\
& \hspace{5ex} - F(x_1,\ldots,x_{j-1}, c,
x_{j+1},\ldots,x_\kappa)
   \end{align*}
for all $x_1,\ldots,x_\kappa\in\rbb$. If
$F\colon\rbb^\kappa\to\ogr{\hh}$, then we set
   \begin{equation*}
F(a,b]= \bigtriangleup_{b_1,a_1}^{(1)}\cdots
\bigtriangleup_{b_\kappa,a_\kappa}^{(\kappa)}F, \quad
a, b \in \rbb^\kappa.
   \end{equation*}
We say that an operator-valued function
$F\colon\rbb^\kappa\to\rzut{\hh}$ is a {\em resolution of the identity} on $\rbb^\kappa$ if $F$ satisfies the
following three conditions\footnote{Given a sequence
$\{a_n\}_{n=1}^\infty\subseteq\rbb^\kappa$ and
$a\in\overline{\rbb}^\kappa$, we write $a_n\searrow
a$ (resp., $a_n\nearrow a$) if
$\{a_n\}_{n=1}^\infty$ is monotonically decreasing
(resp., increasing) with respect to the partial order
``$\Le$'' in $\rbb^\kappa$, and convergent to $a$.
If $a = (-\infty, \ldots, -\infty)$ (resp., $a =
(\infty, \ldots, \infty)$), we also write $a_n\searrow
- \infty$ (resp., $a_n\nearrow \infty$).}:
   \begin{enumerate}[(A)]
   \item $\big(F(a,b]\big)(x)\in\rzut{\hh}$ for all
$a,b,x\in\rbb^\kappa$ such that $a\Le b$,
   \item $\textsc{sot-}\lim\limits_{\substack{x\to x_0
\\ x_0\Le x}} F(x)=F(x_0)$ for every $x_0\in\rbb^\kappa$,
   \item if $\{a_n\}_{n=1}^{\infty}, \{b_n\}_{n=1}^{\infty}
\subseteq \rbb^\kappa$ are such that $a_n\searrow
-\infty$ and $b_n\nearrow \infty$, then
   \begin{align*}
\textsc{sot-}\lim\limits_{n\to\infty}F(a_n,b_n]=I.
   \end{align*}
   \end{enumerate}

If $E$ is a spectral measure on
$\borel(\rbb^\kappa)$, then the function
$F\colon\rbb^\kappa\to\rzut{\hh}$ given by
\begin{equation}\label{fab}
F(x)=E((-\infty,x]),\quad x\in\rbb^\kappa,
\end{equation} is a resolution of the identity.
 In fact, the equation \eqref{fab} defines a one-to-one correspondence between the set of all resolutions of the identity on
$\rbb^\kappa$ and the set of all spectral measures on $\borel(\rbb^\kappa)$, which can be deduced from the following theorem.
\begin{thm} Let $F$ be a resolution of the identity on $\rbb^\kappa$. Then there exists the unique spectral measure on
$\borel(\rbb^\kappa)$ such that
\begin{equation*}
E((a,b])=F(a,b],\quad a,b\in\rbb^\kappa,\: a\Le b.
\end{equation*}
\end{thm}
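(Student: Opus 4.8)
The plan is to build the spectral measure $E$ from scalar measures and then verify the projection-valued axioms. First I would fix $h\in\hh$ and consider the scalar distribution function $F_h(x):=\is{F(x)h}{h}$, $x\in\rbb^\kappa$. Conditions (A)--(C) translate into the statement that $F_h$ is a normalized $\kappa$-dimensional distribution function: its rectangle increments satisfy $\is{F(a,b]h}{h}=\|F(a,b]h\|^2\Ge 0$ because $F(a,b]$ is a projection by (A); it is right-continuous in the sense of (B); and $\lim_n F_h(a_n,b_n]=\|h\|^2$ by (C). The scalar Lebesgue--Stieltjes construction on $\rbb^\kappa$ (cf.\ \cite{ash}) then yields a unique finite positive Borel measure $\mu_h$ with $\mu_h((a,b])=\is{F(a,b]h}{h}$ and $\mu_h(\rbb^\kappa)=\|h\|^2$.

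Next I would pass to a sesquilinear object by polarization. Setting $\mu_{h,g}:=\frac14\sum_{k=0}^{3}i^{k}\mu_{h+i^{k}g}$ gives a complex Borel measure with $\mu_{h,g}((a,b])=\is{F(a,b]h}{g}$, as the polarization identity shows on each rectangle. Linearity of $h\mapsto\mu_{h,g}$ and conjugate-linearity of $g\mapsto\mu_{h,g}$ on all of $\borel(\rbb^\kappa)$ follow from the corresponding identities on the half-open rectangles, since these rectangles form a $\pi$-system generating $\borel(\rbb^\kappa)$ and two measures agreeing on such a $\pi$-system coincide; the same uniqueness principle gives $\mu_{g,h}=\overline{\mu_{h,g}}$. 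Because $0\Le\mu_{h,h}(\sigma)\Le\|h\|^2$, the map $(h,g)\mapsto\mu_{h,g}(\sigma)$ is a bounded Hermitian sesquilinear form with $|\mu_{h,g}(\sigma)|\Le\|h\|\,\|g\|$ (Cauchy--Schwarz for the nonnegative form $\mu_{\cdot,\cdot}(\sigma)$), so the Riesz representation theorem produces a unique self-adjoint $E(\sigma)\in\ogr\hh$ with $\is{E(\sigma)h}{g}=\mu_{h,g}(\sigma)$, satisfying $0\Le E(\sigma)\Le I$ and $E(\rbb^\kappa)=I$ by (C). Weak countable additivity of $E$ is immediate from that of $\mu_{h,g}$.

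The heart of the argument, and the step I expect to be the main obstacle, is showing that $E$ is multiplicative, hence projection-valued. I would first prove $F(R_1)F(R_2)=F(R_1\cap R_2)$ for half-open rectangles $R_1,R_2$. Telescoping the difference operators $\bigtriangleup^{(j)}$ shows that $F$ is finitely additive over rectangles, so if a large rectangle $R_0\supseteq R_1\cup R_2$ is partitioned by the grid generated by the faces of $R_1$ and $R_2$ into cells $\{C_i\}$, then $F(R_0)=\sum_i F(C_i)$. Since each $F(C_i)$ and $F(R_0)$ are projections by (A), the elementary fact that a finite sum of orthogonal projections is itself a projection only if the summands are mutually orthogonal gives $F(C_i)F(C_{i'})=0$ for $i\neq i'$; expanding $F(R_1)F(R_2)=\big(\sum_{C_i\subseteq R_1}F(C_i)\big)\big(\sum_{C_{i'}\subseteq R_2}F(C_{i'})\big)$ then collapses to $\sum_{C_i\subseteq R_1\cap R_2}F(C_i)=F(R_1\cap R_2)$.

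Finally I would bootstrap to arbitrary Borel sets. Fixing a rectangle $R$, the two complex measures $\sigma\mapsto\is{E(\sigma\cap R)h}{g}$ and $\sigma\mapsto\is{E(R)E(\sigma)h}{g}=\mu_{h,E(R)g}(\sigma)$ agree on rectangles by the previous step, hence everywhere; this yields $E(\sigma\cap R)=E(R)E(\sigma)=E(\sigma)E(R)$ for every Borel $\sigma$. Repeating the argument with $R$ now ranging over rectangles while $\sigma$ is held fixed upgrades this to $E(\sigma\cap\tau)=E(\sigma)E(\tau)$ for all Borel $\sigma,\tau$. Taking $\tau=\sigma$ shows $E(\sigma)=E(\sigma)^2$, so each $E(\sigma)$ is an orthogonal projection, and disjoint sets give orthogonal projections, which promotes weak to strong countable additivity. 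Thus $E$ is a spectral measure with $E((a,b])=F(a,b]$; uniqueness follows because any two such spectral measures induce scalar measures $\is{E(\cdot)h}{g}$ agreeing on the generating $\pi$-system of rectangles.
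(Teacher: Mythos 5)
Your argument is correct, and it reaches the conclusion by a genuinely different route than the paper. Both proofs begin identically: from conditions (A)--(C) the scalar functions $F_h(x)=\is{F(x)h}{h}$ are $\kappa$-dimensional distribution functions, and the Lebesgue--Stieltjes construction of \cite{ash} yields finite Borel measures $\mu_h$ with $\mu_h((a,b])=\is{F(a,b]h}{h}$. From there the paper stays on the algebra $\mathcal{F}_0(\rbb^\kappa)$ generated by half-open rectangles: it defines a finitely additive projection-valued set function $E^0$ there, uses the $\mu_h$ only to verify $\sigma$-additivity of $E^0$ on that algebra, and then delegates the passage to a spectral measure on all of $\borel(\rbb^\kappa)$ to the extension theorem \cite[Theorem 5.2.3]{Bir-Sol}. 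You instead polarize the $\mu_h$ into complex measures $\mu_{h,g}$, obtain $E(\sigma)\in\ogr{\hh}$ on every Borel set at once via the Riesz representation of bounded Hermitian forms, and then establish multiplicativity by hand: first on rectangles via the grid decomposition and the fact that a sum of projections which is again a projection must have mutually orthogonal summands, then on arbitrary Borel sets by the two-step $\pi$-system bootstrap. What your approach buys is self-containment --- you never need the operator-valued extension theorem, and you explicitly supply the orthogonality-of-cells argument that the paper's sketch implicitly needs even to see that its $E^0$ is projection-valued on finite disjoint unions (a sum of projections is not a projection in general). What it costs is length: the paper's route compresses the multiplicativity and the upgrade from weak to strong countable additivity into the citation. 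All the individual steps you use (polarization on the generating $\pi$-system, Cauchy--Schwarz for the nonnegative form $\mu_{\cdot,\cdot}(\sigma)$, $P_i+P_j\Le I$ forcing $P_iP_j=0$, and orthogonality promoting weak to strong $\sigma$-additivity) are sound.
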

\begin{proof}[The sketch of the proof]
{\it The uniqueness.} The uniqueness is a consequence of the fact that the family $\{(a,b]\colon a,b\in\rbb^\kappa,\: a\Le b\}$ generates the $\sigma$-algebra $\borel(\rbb^\kappa)$.

{\it The existence.} Denote by
$\mathcal{F}_0(\rbb^\kappa)$ the collection of all
finite disjoint sums of sets of the form $(a, b]\cap \rbb^\kappa$, where $a,b \in
\overline{\rbb}^{\kappa}$. Clearly,
$\mathcal{F}_0(\rbb^\kappa)$ is the algebra generated
by the family $\mathcal{I}=\{(a, b] \cap \rbb^\kappa \colon a,b \in
\overline{\rbb}^{\kappa}\}$.
Now we define the function $E^0\colon \mathcal{F}_0(\rbb^\kappa)\to\rzut{\hh}$ in few  steps:\\
{\it Step} 1. If $a,b\in\rbb^\kappa$ and $a\Le b$, then $E^0((a,b]):=F(a,b]$.\\
{\it Step} 2. If $a,b\in\overline{\rbb}^\kappa$ and $a\Le b$, then
$E^0((a,b]):=\textsc{sot-}\lim F(a_n,b_n]$,
where $a_n,b_n\in\rbb^\kappa$ for $\nbb$,
$a_n\searrow a$ and $b_n\nearrow b$. The limit exists since $F(a,b]\leqslant F(c,d]$ whenever $c\leqslant a\leqslant b\leqslant d$.\\
{\it Step} 3. Now if $J=\bigcup_{j=1}^{n}J_j$, where $J_1,\ldots,J_n\in\mathcal{I}$ are pairwise disjoint, then we set
$E^0(J):=\sum_{j=1}^{n}E^0(J_j)$. $E^0$ is well-defined and additive. Moreover,
$E^0(\rbb^\kappa)=I$ by (C).

 By \cite[Theorem 5.2.3.]{Bir-Sol} it is sufficient to show that $E^0$ is $\sigma$-additive. Fix $h\in\hh$. Let
$\mu_h^0(J):=\is{E^0(J)h}{h}$ for
$J\in\mathcal{F}_0(\rbb^\kappa)$ and
$F_h(x):=\is{F(x)h}{h}$ for $x\in\rbb^\kappa$.
By the conditions (A) and (B), $F_h$ is a distribution function
on $\rbb^\kappa$. Moreover,
$\mu_h^0((a,b])=F_h(a,b]$ for
$a,b\in\rbb^\kappa,\: a\Le b$. By
\cite[Theorem 1.4.9.]{ash} there exists a Borel measure $\mu_h$ such that $\mu_h((a,b])=F_h(a,b]$ for $a,b\in\rbb^\kappa,\: a\Le b$. In particular,
$\mu_h|_{\mathcal{F}_0(\rbb^\kappa)}=\mu_h^0$. Thus $\mu_h^0$ is $\sigma$-additive. Therefore
$E^0$ is also $\sigma$-additive.
\end{proof}

\section*{Acknowledgements}
I would like to thank Professor Jan Stochel for his helpful advice, which improved the paper.

\bibliographystyle{plain}
   \bibliography{library}

   \end{document}